\documentclass[11pt, reqno]{amsart}
\usepackage{mathrsfs}
\usepackage[active]{srcltx}
\usepackage{mathrsfs}
\usepackage{mathtools}
\usepackage{esint}
\usepackage{longtable}
\usepackage{bbm}
\usepackage{xcolor}

\usepackage{xcolor}
\definecolor{bluecite}{HTML}{0875b7}
\usepackage[unicode=true,
bookmarksopen={true},
pdffitwindow=true,
colorlinks=true,
linkcolor=bluecite,
citecolor=bluecite,
urlcolor=bluecite,
hyperfootnotes=false,
pdfstartview={FitH},
pdfpagemode= UseNone]{hyperref}

\textwidth 15.6cm \textheight 20.5cm \topmargin 0.5cm
\oddsidemargin 0.5cm 
\evensidemargin 0.5cm 

\newtheorem{proposition}{Proposition}[section]
\newtheorem{theorem}{Theorem}[section]

\newtheorem{remark}{Remark}[section]
\numberwithin{equation}{section}

\usepackage{bbm}  
\usepackage{dsfont}

\address{\textsc{Zolt\'an M. Balogh}: 
	Mathematisches Institut, University of Bern, Sidlerstrasse 12, 3012, Bern, Switzerland.}
	\email{zoltan.balogh@math.unibe.ch}
\address{\textsc{Sebastiano Don}:
Mathematisches Institut, University of Bern, Sidlerstrasse 12, 3012, Bern, Switzerland.}
\email{sebastiano.don@math.unibe.ch}
\address{\textsc{Alexandru Krist\'aly}: 
Department of Economics, Babe\c s-Bolyai University, Cluj-Napoca, Romania  \& Institute of Applied Mathematics, \'Obuda
	University, 
	Budapest, Hungary. 
}
\email{alex.kristaly@econ.ubbcluj.ro; kristaly.alexandru@uni-obuda.hu
}

\subjclass[]{ 
	28A25, 
	46E35, 
	49Q22
	}
\keywords{Log-Sobolev inequalities, weighted inequalities, optimal mass transport.}
\thanks{Z. M. Balogh and S. Don were
	supported by the Swiss National Science Foundation, Grant Nr. {200020\_191978}. A. Krist\'aly  was supported by the Excellence Scholarship
	Program \'OE-KP-2-2022 (Hungary) and by the  UEFISCDI/CNCS grant PN-III-P4-ID-PCE2020-1001 (Romania).}

\title[Sharp log-Sobolev inequalities]{Sharp weighted  log-Sobolev inequalities: characterization of equality cases and applications}

\date{\today}

\author[Z.\ M.\ Balogh, S.\ Don and A.\ Krist\'aly]{Zolt\'an M. Balogh, Sebastiano Don and Alexandru Krist\'aly}

\begin{document}
	\begin{abstract}
By using optimal mass transport theory, we provide a direct proof to the sharp $L^p$-log-Sobolev inequality $(p\geq 1)$ involving a log-concave homogeneous weight on an open convex cone $E\subseteq \mathbb R^n$. The perk of this proof is that it allows to characterize the extremal functions realizing the equality cases in the $L^p$-log-Sobolev inequality. The characterization of the equality cases is new for  $p\geq n$ even in the unweighted setting and $E=\mathbb R^n$. As an application, we provide a sharp weighted hypercontractivity estimate for the Hopf-Lax semigroup related to the Hamilton-Jacobi equation, characterizing also the equality cases.  
	\end{abstract}
	\maketitle 

\vspace{-0.7cm}
\section{Introduction}
The first version of the optimal Euclidean $L^p$-log-Sobolev inequality for $1 < p < n$ 
    goes back to Del Pino and Dolbeault \cite{delPinoDolbeault-2}, whose proof is based on the Gagliardo-Nirenberg inequality. It states that for Sobolev functions $u \in {W}^{1,p}(\mathbb R^n)$ with $\displaystyle\int_{\mathbb R^n} |u|^p dx = 1$ one has the inequality 
\begin{equation}\label{e-sharp-log-Sobolev}
\int_{\mathbb R^n}  |u|^p\log |u|^p dx\leq \frac{n}{p}	\log\left(\mathcal L_{p}\displaystyle\int_{\mathbb R^n} |\nabla u|^p dx\right);
\end{equation}
here, the sharp constant has the form
$$\mathcal L_{p} = \frac{p}{n}\left(\frac{p-1}{e}\right)^{p-1}\left(\Gamma\left(\frac{n}{p'}+1\right)\omega_n\right)^{-\frac{p}{n}},$$
where $p'= p/(p-1)$ and $\omega_n$ is the volume of the unit ball in $\mathbb R^n$. Furthermore, in \cite{delPinoDolbeault-2} it was shown by a symmetrization technique that equality in \eqref{e-sharp-log-Sobolev} holds if and only if the extremal functions $u\in {W}^{1,p}(\mathbb R^n)$ belong to the family of  Gaussians, i.e., they are given by 
\begin{equation}\label{e-Gaussian}
		u_{\lambda,x_0}(x)=\lambda^\frac{n}{pp'}\left(\Gamma\left(\frac{n}{p'}+1\right)\omega_n\right)^{-\frac{1}{p}}e^{-\lambda\frac{|x+x_0|^{p'}}{p}},\ x\in \mathbb R^n, \ 
	\end{equation}
where $\lambda>0$ and $x_0\in \mathbb R^n$ are arbitrarily fixed.  When $p=2$, inequality \eqref{e-sharp-log-Sobolev} appears in the paper of Weissler \cite{Weissler}, which turns out to be equivalent to Gross' log-Sobolev inequality \cite{Gross} stated for the Gaussian measure. 
Very soon after the paper \cite{delPinoDolbeault-2} has been published, by using the Pr\'ekopa-Leindler inequality, Gentil \cite{Gentil} (see also \cite{Fujita}) showed  that in fact \eqref{e-sharp-log-Sobolev} is valid for all $p > 1$ and the equality in \eqref{e-sharp-log-Sobolev} holds true for functions of the type \eqref{e-Gaussian}. However, an 'if and only if' characterization of the equality in  \eqref{e-sharp-log-Sobolev}  remained still open for $p \geq n$. 

The purpose of the present paper is twofold. First, we prove a more general, {\it weighted}  version of \eqref{e-sharp-log-Sobolev} for \textit{all} values of $1 \leq p < \infty$ on open convex cones of $ \mathbb R^n$. Second,  we  provide an 'if and only if' characterization of the equality cases in our version of the $L^p$-log-Sobolev inequality. In particular, this result provides a concluding answer to the open problem concerning the characterization of the equality case for $p\geq n$ in $\mathbb R^n,$  see e.g. Gentil \cite[p.\ 599]{Gentil}.

To formulate our results we need to  introduce some notation. Let us consider $E\subseteq \mathbb R^n$, an open convex cone and $\omega\colon E\to (0,\infty)$ be a  log-concave homogeneous weight of class $\mathcal C^1$ with degree $\tau \geq 0$; that means that the function $x \to \log \omega(x) $ is concave on $E$ and it satisfies $\omega(\lambda x) = \lambda^{\tau} \omega(x)$ for all $x \in E$ and $\lambda \geq 0$. For $p \geq 1$  we denote by  $L^p(\omega;E)$ the space that contains all measurable functions $u: E \to \mathbb R$ such that $\displaystyle\int_{E} |u|^p \omega dx < \infty$ and we also introduce the weighted Sobolev space
\begin{equation}\label{W-p-q}
	{W}^{1,p}(\omega;E)=\{u\in L^p(\omega;E):\nabla u\in L^p(\omega;E)\}.	
\end{equation}
We denote by $B(x,r)\subset \mathbb R^n$ the ball with center in $x\in \mathbb R$ and radius $r>0$; in particular, we let $B\coloneqq B(0,1)$.  

As expected, the cases $p>1$ and $p=1$ should be treated separately.  We start with the main result for the case $p>1$: 




\begin{theorem}\label{log-Sobolev} $($Case $p>1$$)$
	Let $E\subseteq \mathbb R^n$ be an open convex cone and $\omega\colon E\to (0,\infty)$ be a  log-concave homogeneous weight of class $\mathcal C^1$ with degree $\tau\geq 0,$ and $p> 1.$ Then for every function $u\in {W}^{1,p}(\omega;E)$ with $\displaystyle\int_E |u|^p\omega dx=1$ we have 
	\begin{equation}\label{sharp-log-Sobolev}
	\mathcal E_{\omega,E}(|u|^p)\coloneqq 	\int_E |u|^p\log |u|^p \omega dx\leq \frac{n+\tau}{p}	\log\left(\mathcal L_{\omega,p}\displaystyle\int_E |\nabla u|^p\omega dx\right),
	\end{equation}
	where 
	$$\mathcal L_{\omega,p}=\frac{p}{n+\tau}\left(\frac{p-1}{e}\right)^{p-1}\left(\Gamma\left(\frac{n+\tau}{p'}+1\right)\int_{B\cap E}\omega\right)^{-\frac{p}{n+\tau}}.$$
	Equality holds in \eqref{sharp-log-Sobolev} if and only if the extremal function belongs to the family of Gaussians
	\begin{equation}\label{Gaussian}
		u_{\lambda,x_0}(x)=\lambda^\frac{n+\tau}{pp'}\left(\Gamma\left(\frac{n+\tau}{p'}+1\right)\int_{B\cap E}\omega\right)^{-\frac{1}{p}}e^{-\lambda\frac{|x+x_0|^{p'}}{p}},\ \  x\in E, \ \lambda>0,
	\end{equation}
where
\begin{itemize}
	\item $x_0\in -\partial E\cap \partial E$ and $\omega(x+x_0)=\omega(x)$ for every $x\in E$ whenever $\tau>0$, and 
	\item $x_0\in -\overline E\cap \overline E$ and $\omega$ is constant in $E$ whenever $\tau=0$. 
\end{itemize}
\end{theorem}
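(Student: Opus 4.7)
The plan is to prove Theorem~\ref{log-Sobolev} by the optimal mass transport method of Agueh-Ghoussoub-Kang, adapted to the weighted conical setting, carefully tracking every intermediate inequality to read off the equality characterization.

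First, I reduce to $u\geq 0$, set $f = u^p$ so that $\int_E f\omega\,dx = 1$, and introduce, for a parameter $\mu > 0$ to be optimized at the end, the Gaussian target density $g_\mu(y) = C_\mu e^{-\mu|y|^{p'}}$ normalized by $\int_E g_\mu\omega\,dx = 1$; a polar-coordinate computation using the $\tau$-homogeneity of $\omega$ yields $C_\mu = \mu^{(n+\tau)/p'}/(\Gamma((n+\tau)/p'+1)\int_{B\cap E}\omega)$. By Brenier's theorem in the appropriate weighted form, there exists a convex $\varphi\colon E\to\mathbb R$ whose gradient $T = \nabla\varphi$ pushes $f\omega\,dx$ forward to $g_\mu\omega\,dy$ and satisfies the Monge-Amp\`ere identity $f(x)\omega(x) = g_\mu(T(x))\omega(T(x))\det_A D^2\varphi(x)$ almost everywhere.

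The core of the argument is a pointwise estimate. The weighted AM-GM inequality applied to the $n$ eigenvalues of $D^2\varphi$ together with the ``extra'' factor $(\omega(T)/\omega)^{1/\tau}$ gives
\[
\bigl[\det D^2\varphi\cdot \omega(T)/\omega\bigr]^{1/(n+\tau)} \leq \frac{\Delta\varphi + \tau(\omega(T)/\omega)^{1/\tau}}{n+\tau}.
\]
Since $\omega^{1/\tau}$ is concave and positively $1$-homogeneous, Euler's identity yields $\tau(\omega(T)/\omega)^{1/\tau}\leq \langle\nabla\log\omega(x),T(x)\rangle$; combined with $\log s\leq s-1$, this produces
\[
\log\det D^2\varphi + \log(\omega(T)/\omega) \leq \Delta\varphi + \langle\nabla\log\omega, T\rangle - (n+\tau).
\]
Multiplying by $f\omega$, integrating, rewriting the left side via Monge-Amp\`ere as $\int f\log f\,\omega\,dx - \int f\log g_\mu(T)\,\omega\,dx$, and integrating by parts in $\int f\Delta\varphi\,\omega\,dx$ (which produces exactly $-\int\nabla f\cdot T\,\omega\,dx - \int f\langle\nabla\log\omega,T\rangle\,\omega\,dx$) eliminates the Euler-type terms and leaves
\[
\int_E f\log f\,\omega\,dx \leq \int_E f\log g_\mu(T)\,\omega\,dx - \int_E \nabla f\cdot T\,\omega\,dx - (n+\tau).
\]
Cauchy-Schwarz and Young's inequality with parameter $t > 0$ then bound $-\int\nabla f\cdot T\,\omega\,dx = -p\int u^{p-1}\nabla u\cdot T\,\omega\,dx$ by $t^p X + (p-1)t^{-p'}M$, where $X = \int|\nabla u|^p\omega\,dx$ and $M = \int f|T|^{p'}\omega\,dx = \int|y|^{p'}g_\mu\omega\,dy$ is explicit in $\mu$ by the pushforward property. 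Choosing $t^{p'} = (p-1)/\mu$ cancels the $M$-dependence against the $-\mu M$ coming from $\log g_\mu$; optimizing over $\mu > 0$ (critical point $\mu^{p-1} = p(p-1)^{p-1}X/(n+\tau)$) and collecting constants yields exactly $\frac{n+\tau}{p}\log(\mathcal L_{\omega,p}X)$, which is \eqref{sharp-log-Sobolev}.

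For the equality characterization I retrace each inequality. Equality in AM-GM together with $\log s\leq s-1$ forces all eigenvalues of $D^2\varphi$ to equal $(\omega(T)/\omega)^{1/\tau}=1$; hence $D^2\varphi = I$ and $T(x) = x+x_0$ is a pure translation with $\omega(T(x)) = \omega(x)$. Equality in the concavity-Euler step forces $\omega^{1/\tau}$ to be affine on every segment $[x,x+x_0]$, which combined with the above yields $\omega(x+x_0) = \omega(x)$ throughout $E$; the boundary condition $x_0\in -\partial E\cap\partial E$ emerges from requiring both $T(E)\subseteq E$ and the analogous property for the inverse transport. Pointwise equality in Young's and Cauchy-Schwarz inequalities produces the ODE $\nabla u(x) = -\frac{\lambda}{p-1}|x+x_0|^{p'-2}(x+x_0)u(x)$ (with $\lambda$ the optimal value of $\mu$), whose integration gives precisely $u_{\lambda,x_0}$ from \eqref{Gaussian}. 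The case $\tau = 0$ is handled separately using that a $0$-homogeneous log-concave weight on an open convex cone must be constant, which reduces the argument to its unweighted version. The main anticipated obstacles are (a) justifying the integration by parts rigorously, since $\varphi$ is only convex and $D^2\varphi$ exists merely in the Alexandrov sense, requiring careful use of the measure-theoretic decomposition $\Delta\varphi = \operatorname{tr} D^2_A\varphi\,dx + (\text{singular part})\geq 0$; and (b) the rigidity step, where pointwise equalities hold only $f\omega$-a.e.\ but must be propagated to global identities on all of $E$ in order to pin down both $u$ and $\omega$.
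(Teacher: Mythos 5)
Your computational skeleton is correct and follows essentially the same route as the paper: optimal transport onto a Gaussian target, the Monge--Amp\`ere equation, a weighted AM--GM estimate, integration by parts, and H\"older/Young. The two reorganizations you make are harmless: you fold the ratio $\omega(T)/\omega$ into the AM--GM with $\tau$ extra slots and use $\log s\le s-1$, then optimize over the target parameter $\mu$ and the Young parameter $t$ at the end, whereas the paper keeps the target fixed, normalizes $\int_E|u|^p\omega\log\omega$ by a scaling $u_t(x)=t^{(n+\tau)/p}u(tx)$, and applies Jensen's inequality; both yield $\mathcal L_{\omega,p}$, and I checked that your critical values of $\mu$ and $t$ give the right constant. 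However, the two items you defer as ``anticipated obstacles'' are not routine technicalities --- they are the actual content of the paper's proof, and as written your argument has genuine gaps there.

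First, your integration by parts is asserted as an identity, $\int_E f\,\Delta\varphi\,\omega\,dx=-\int_E\nabla f\cdot T\,\omega\,dx-\int_E f\,\nabla\omega\cdot T\,dx$, but for a general nonnegative $u\in W^{1,p}(\omega;E)$ and a merely convex $\varphi$ only the inequality $\le$ holds (the singular part of $\Delta_{\mathcal D'}\varphi$ and the boundary term on $\partial E$ both have a sign); establishing even that inequality on all of $W^{1,p}(\omega;E)$, without compact support or smoothness, requires the three-layer approximation (cut-off near $\partial E$, dilation plus cut-off at infinity, mollification) of the paper's Proposition \ref{prop:integrationbyparts}. Relatedly, you never verify that $\mathcal E_{\omega,E}(|u|^p)<+\infty$ and that the splitting $\int f\omega(\log f-\log g_\mu(T))=\int f\log f\,\omega-\int f\log g_\mu(T)\,\omega$ is legitimate; the paper needs a separate two-weighted Sobolev argument (Proposition \ref{proposition-key}) for this. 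Second, and more seriously for the equality characterization: from equality in AM--GM you only get $D_A^2\varphi=I_n$ almost everywhere, which does \emph{not} imply $\nabla\varphi(x)=x+x_0$ unless you first prove that the singular part $\Delta_s\varphi$ of the distributional Laplacian vanishes. The paper does this by exploiting equality in H\"older's inequality to derive a two-sided bound $c_K\le u\le C_K$ on compact subsets of $E\cap S$, feeding that lower bound back into the approximation scheme to force $\langle u_{k,\varepsilon}^\delta\,\omega,\Delta_s\varphi\rangle\to 0$, and only then invoking the Figalli--Maggi--Pratelli lemma to conclude $T$ is a translation. Your sketch jumps directly from ``$D^2\varphi=I$'' to ``$T$ is a pure translation,'' so the rigidity step is missing; filling it is where most of the work in \S\ref{subsection-3-1-2} lies. (Once $T(x)=x+x_0$ and $\omega(x+x_0)=\omega(x)$ are in hand, note that the Monge--Amp\`ere equation gives the Gaussian form of $u$ directly, with no need to integrate the first-order ODE you write down.)
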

 Although the log-concavity gives a regularity of $\omega$ (i.e., an a.e. differentiability on $E$), we assume for simplicity that it is of class $\mathcal C^1$; a suitable modification of our proofs allows to relax this regularity assumption, see e.g.  Balogh,  Guti\'{e}rrez and  Krist\'{a}ly \cite{BGK_PLMS}. 

As we noticed, Theorem \ref{log-Sobolev}  provides an  extension of \eqref{e-sharp-log-Sobolev} to weighted Sobolev spaces as well as  a  full characterization of the extremal functions, answering in particular also the question from Gentil \cite{Gentil}. Furthermore,  Theorem \ref{log-Sobolev} gives the full class of extremal functions in the $L^p$-log-Sobolev inequality $(p>1)$ for monomial weights, see Nguyen \cite{Nguyen}, where only the 'if' part is stated.    

In the case $p=1$, the  framework for stating log-Sobolev inequality is the space of functions with weighted bounded variation $BV(\omega;E)$, rather than the Sobolev space ${W}^{1,1}(\omega;E)\subset BV(\omega;E)$. In fact, $BV(\omega;E)$ contains all functions $u \in L^1(\omega;E)$ such that 
$$ \sup \left\{ \int_E u \text{div}(\omega X)  dx: X \in C^1_c(E; \mathbb R^n) , |X| \leq 1 \right\} < \infty.$$ By the Riesz representation theorem we can associate to any function $u\in BV(\omega;E)$ its  weighted variation measure
$\|Du\|_{\omega}$ similarly as in the usual non-weighted case, see e.g. Evans and Gariepy \cite{EG}. Using this notation and $\mathbbm{1}_S$ for the characteristic function of the nonempty set $S\subset \mathbb R^n$, we can formulate the appropriate version of the weighted $L^1$-log-Sobolev inequality:

\begin{theorem}\label{log-Sobolev-1} $($Case $p=1$$)$
	Let $E\subseteq \mathbb R^n$ be an open convex cone and $\omega\colon E\to (0,\infty)$ be a  log-concave homogeneous weight of class $\mathcal C^1$ with degree $\tau\geq 0.$ Then for every function $u\in BV(\omega;E)$ with $\displaystyle\int_E |u|\omega dx=1$ we have 
	\begin{equation}\label{sharp-log-Sobolev-1}
		\mathcal E_{\omega,E}(|u|)\coloneqq 	\int_E |u|\log |u| \omega dx\leq (n+\tau)\log\left(\frac{\left(\displaystyle\int_{B\cap E}\omega\right)^{-{\frac{1}{n+\tau}}}}{n+\tau}\|D(|u|)\|_{\omega}(E)  \right).
	\end{equation}
Moreover, equality holds in \eqref{sharp-log-Sobolev-1} if and only if 
\begin{equation}\label{Indicator}
		u_{\lambda,x_0}(x)=\frac{\lambda^{-n-\tau}}{\displaystyle\int_{B \cap E} \omega dx} \mathbbm{1}_{B(-x_0, \lambda)\cap E}(x),\ \ \ x\in E, \ \lambda>0,
	\end{equation}
	where 
\begin{itemize}
	\item $x_0\in -\partial E\cap \partial E$ and $\omega(x+x_0)=\omega(x)$ for every $x\in B(-x_0, \lambda)\cap E$ whenever $\tau>0$, and 
	\item $x_0\in -\overline E\cap\overline E$ and $\omega$ is constant in $E$ whenever $\tau=0$. 
\end{itemize}
%
\end{theorem}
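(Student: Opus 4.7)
My strategy is to derive Theorem \ref{log-Sobolev-1} as the limit $p\to 1^+$ of Theorem \ref{log-Sobolev} on smooth functions, extend to $BV(\omega;E)$ by density, and handle the equality case separately via a reduction to the sharp weighted isoperimetric inequality in convex cones. A direct computation, based on homogeneity of $\omega$ combined with its assumed translation invariance (or constancy when $\tau=0$), confirms that the candidates in \eqref{Indicator} have unit $\omega$-mass and realize equality in \eqref{sharp-log-Sobolev-1}, so only the converse implication is nontrivial.

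For the inequality itself, I would first treat $u\in C^1_c(E)$ nonnegative with $\int_E u\omega\,dx=1$. Setting $c_p\coloneqq \int_E u^p\omega\,dx$, applying Theorem \ref{log-Sobolev} to the normalization $u_p\coloneqq u/c_p^{1/p}$ for $p>1$ and unfolding gives
\[c_p^{-1}\int_E u^p\log u^p\,\omega\,dx-\log c_p\leq \frac{n+\tau}{p}\log\Bigl(\mathcal L_{\omega,p}\,c_p^{-1}\int_E|\nabla u|^p\omega\,dx\Bigr).\]
Since $p'\to\infty$ as $p\to 1^+$, one has $\Gamma(\tfrac{n+\tau}{p'}+1)\to 1$ and $((p-1)/e)^{p-1}\to 1$, so $\mathcal L_{\omega,p}\to(n+\tau)^{-1}(\int_{B\cap E}\omega)^{-1/(n+\tau)}$; the boundedness and compact support of $u$ justify dominated convergence to get $c_p\to 1$ and the convergence of the other integrals, whence \eqref{sharp-log-Sobolev-1} follows for such $u$. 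For general $u\in BV(\omega;E)$ I would replace $u$ by $|u|$ (which leaves both sides of \eqref{sharp-log-Sobolev-1} unchanged) and approximate by smooth nonnegative $u_k$ with $u_k\to|u|$ in $L^1(\omega;E)$ and $\int_E|\nabla u_k|\omega\,dx\to\|D|u|\|_\omega(E)$; passing to the limit concludes the general inequality.

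The main obstacle will be the necessity of the equality characterization, since the Gaussian extremals of Theorem \ref{log-Sobolev} degenerate as $p\to 1^+$ and cannot be tracked through the limit. I would reduce the equality condition to two geometric and one-dimensional ingredients by rewriting both sides of \eqref{sharp-log-Sobolev-1} via the coarea formula $\|D|u|\|_\omega(E)=\int_0^\infty P_\omega(\{|u|>t\};E)\,dt$ and the layer-cake identity
\[\int_E|u|\log|u|\,\omega\,dx=\int_0^\infty \log t\cdot m(t)\,dt+1,\qquad m(t)\coloneqq\int_{\{|u|>t\}}\omega\,dx,\]
which holds under $\int_E|u|\omega\,dx=1$, and by invoking the sharp weighted isoperimetric inequality in $E$ of Cabr\'e--Ros-Oton--Serra type, namely $P_\omega(A;E)\geq \omega(A)^{(n+\tau-1)/(n+\tau)}/\mathcal L_{\omega,1}$, whose equality cases are precisely the admissible balls. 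Together these reduce \eqref{sharp-log-Sobolev-1} to the one-dimensional inequality
\[\int_0^\infty\log t\cdot m(t)\,dt+1\leq (n+\tau)\log\int_0^\infty m(t)^{(n+\tau-1)/(n+\tau)}\,dt\]
for nonincreasing $m\geq 0$ with $\int_0^\infty m\,dt=1$, whose equality case is $m=a^{-1}\mathbbm{1}_{[0,a]}$ for some $a>0$. Equality in the isoperimetric step then forces each superlevel set to be an admissible ball, and equality in the one-dimensional step collapses these balls to a single one, yielding the extremal form \eqref{Indicator} together with the stated conditions on $x_0$ and $\omega$.
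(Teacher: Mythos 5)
Your plan is genuinely different from the paper's (which gives a direct optimal-transport proof via a Monge--Amp\`ere equation between $|u|\omega\,dx$ and the normalized indicator of $B\cap E$, plus a $BV$ integration-by-parts inequality), but as written it has two real gaps.

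First, the $p\to1^+$ limit combined with density fails on cones with boundary. If $u_k\in C^1_c(E)$ and $u_k\to|u|$ in $L^1(\omega;E)$, then by lower semicontinuity one can only have $\liminf_k\int_E|\nabla u_k|\omega\,dx\geq \|D|u|\|_\omega(E)+\int_{\partial E}|u|\,\omega\,d\mathcal H^{n-1}$, and the trace term cannot be removed (already for $E=(0,\infty)$, $\omega\equiv1$, $u=\mathbbm{1}_{(0,1)}$, any compactly supported approximation has total variation tending to at least $2$, not $1$). So the two-step scheme ``prove on $C^1_c(E)$, then approximate'' only yields \eqref{sharp-log-Sobolev-1} with $\|D|u|\|_\omega(E)$ replaced by the strictly larger trace-augmented quantity --- a weaker inequality for which the claimed extremals \eqref{Indicator} (which have nonzero trace on $\partial E$ since $-x_0\in\partial E$) are not extremal. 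Repairing this forces you to run the $p\to1^+$ limit on smooth functions that do \emph{not} vanish at $\partial E$ and to use an approximation in the strict $BV(\omega;E)$ topology relative to $E$ (essentially the paper's Proposition \ref{prop:p=1}); you must then also justify convergence (or one-sided semicontinuity in the correct direction) of the entropy $\int u_k\log u_k\,\omega$ along the approximation, which does not follow from $L^1(\omega)$ convergence alone since $\omega\,dx$ has infinite mass on the cone. These are precisely the technical obstructions the paper's direct proof is designed to avoid.

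Second, your one-dimensional inequality $\int_0^\infty m\log t\,dt+1\leq (n+\tau)\log\int_0^\infty m^{(n+\tau-1)/(n+\tau)}dt$ is asserted, not proved, yet after symmetrization it carries the entire analytic content of the theorem; neither Beckner nor Ledoux characterize its equality case. It is in fact true and elementarily provable: Gibbs' inequality with reference density $m$ gives $N\log\int m^{q}\geq -\int m\log m$ with $q=(N-1)/N$, $N=n+\tau$, with equality iff $m$ is constant on its support, and writing $-\int m\log m=\int m\log\frac{1}{t\,m(t)}dt+\int m\log t\,dt$ together with $t\,m(t)\leq M(t):=\int_0^t m$ (valid since $m$ is nonincreasing) yields $\int m\log\frac{1}{t m}\,dt\geq\int_0^1\log\frac1s\,ds=1$. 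You should supply this. Also note that the equality cases of the weighted isoperimetric inequality are not characterized in Cabr\'e--Ros-Oton--Serra \cite{Cabre-Ros-Oton-Serra}; you need Cinti--Glaudo--Pratelli--Ros-Oton--Serra \cite{CGPROS}, which is exactly what the paper invokes at the corresponding point of its own equality analysis. Finally, observe that once the coarea formula, the isoperimetric inequality and the 1D lemma are in place, this chain already proves the inequality \eqref{sharp-log-Sobolev-1} for all of $BV(\omega;E)$ directly at the level of level sets --- so your first (limiting) argument is not only gappy but redundant: the symmetrization route, properly completed, is a self-contained and arguably shorter alternative to the paper's transport proof, at the price of importing the sharp isoperimetric inequality and its rigidity as black boxes rather than deriving everything from the Monge--Amp\`ere equation.
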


 Theorem \ref{log-Sobolev-1} extends the result of Beckner \cite{Beckner} and Ledoux \cite{Ledoux} to the weighted setting, providing also the full characterization of extremals. When $\omega\equiv 1$ and $E=\mathbb R^n$, Theorem \ref{log-Sobolev-1} follows by the quantitative log-Sobolev inequality established by   Figalli, Maggi and Pratelli  \cite{FMP2}. Furthermore, the results concerning monomial weights, treated by Nguyen \cite{Nguyen}, directly follows by Theorem \ref{log-Sobolev-1} together with the explicit form of the extremals.   

Log-Sobolev inequalities are typically  deduced by using a \textit{limiting argument} in a Gagliardo-Nirenberg-type inequality, see e.g.\ Beckner and Pearson \cite{Beckner-Pearson}, Del Pino and Dolbeault \cite{delPinoDolbeault-2}, Lam \cite{Lam}, Nguyen \cite{Nguyen} and references therein. In fact there are two different limiting arguments involved in this method: with respect to the \textit{parameter} in the Gagliardo-Nirenberg inequality and also with respect to \textit{approximation} by compactly supported smooth functions.  Because of the usage of  this double approximation,  an  'if and only if' characterization of the extremals is no longer possible by this method. 

Having as main goal the characterization of the extremals, we follow in this paper a different approach:  by using optimal transport theory  inspired by Cordero-Erausquin, Nazaret and Villani \cite{CENV}, \cite{Villani} -- where sharp Sobolev and Gagliardo-Nirenberg inequalities are established  --  we give a \textit{direct proof} of the log-Sobolev inequalities \eqref{sharp-log-Sobolev} and \eqref{sharp-log-Sobolev-1}.  In this way we avoid the use of the limiting argument with respect to the parameter in the Gagliardo-Nirenberg inequality. Next, we shall prove the inequality for a general  $W^{1,p}(\omega;E)$ resp.\ $BV(\omega;E)$ function to avoid the necessity of approximation with compactly supported smooth functions. However, an important technical difficulty arises: 
the proofs of inequalities \eqref{sharp-log-Sobolev} and \eqref{sharp-log-Sobolev-1}  are much simpler on $C_c^\infty(\mathbb R^n)$ because the usual tools of calculus are readily available for this class of functions. In order to handle this situation, we have to develop these tools of calculus for non-smooth functions. In this sense we shall prove two important technical propositions containing 'integration by part' formulas/inequalities for functions in the general  space $W^{1,p}(\omega;E)$ and $BV(\omega;E)$,  which will play crucial roles in the proof of  \eqref{sharp-log-Sobolev} and \eqref{sharp-log-Sobolev-1}, see Propositions \ref{prop:integrationbyparts} and \ref{prop:p=1}, respectively.   These formulas together with a scaling argument, a weighted entropy-type control (see Proposition \ref{proposition-key}) and  properties of optimal mass transport maps give not only an elegant proof for \eqref{sharp-log-Sobolev} and \eqref{sharp-log-Sobolev-1}, but also the full characterization of the equality cases by   
 tracking back the sharp estimates in the proofs.




The optimal mass transport theory has been successfully applied in proving weighted Sobolev and Gagliardo-Nirenberg inequalities, see e.g.\ Agueh,  Ghoussoub and  Kang \cite{AGK}, Ciraolo, Figalli and Roncoroni \cite{CFR}, Lam \cite{Lam}, Nguyen \cite{Nguyen},  and Balogh, Guti\'errez and Krist\'aly \cite{BGK_PLMS}. Moreover, Figalli, Maggi and Pratelli \cite{FMP, FMP2} applied this method to prove quantitative isoperimetric and Sobolev inequalities in the unweighted setting. Cordero-Erausquin \cite{Cordero}, Barthe and Kolesnikov \cite{BartheK} and Fathi, Indrei and Ledoux \cite{Fathi} gave a direct proof of the Gaussian log-Sobolev inequality (even in quantitative form) for $p=2$  based on optimal mass transport theory. However, we are not aware of the existence of a direct proof of the sharp Euclidean $L^p$-log-Sobolev inequality based on the optimal transport method, $p>1$.

We notice that -- by using the ABP-method --  further sharp Sobolev inequalities with weights have been also stated by Cabr\'e, Ros-Oton and Serra \cite{Cabre-Ros-Oton, Cabre-Ros-Oton-Serra}. Similarly to the aforementioned works (see  \cite{CFR},  \cite{Lam},  \cite{Nguyen} and \cite{BGK_PLMS}), in the papers \cite{Cabre-Ros-Oton, Cabre-Ros-Oton-Serra} the initial assumption is the concavity of $\omega^\frac{1}{\tau}$ on $E\subseteq \mathbb R^n$ (with the convention that $\omega$ is constant whenever $\tau=0$). Even though the concavity of $\omega^\frac{1}{\tau}$ implies generically the log-concavity of $\omega$, -- the latter being crucial in the proof of the log-Sobolev inequalities -- 
one can observe that the converse also holds due to the homogeneity of $\omega$,   see Proposition \ref{prop-log-concave}. Consequently, the assumptions of Theorems \ref{log-Sobolev} and \ref{log-Sobolev-1} are in a full concordance with the ones used in the usual literature, and they essentially require the weight  $\omega$ to verify  the curvature-dimension condition ${\sf CD}(0,n+\tau)$ on $E$, see \cite{Cabre-Ros-Oton-Serra}. 

In several of these  works the authors established Sobolev-type inequalities with general norms instead of the Euclidean one, see e.g.\ Cordero-Erausquin, Nazaret and Villani \cite{CENV},   Cabr\'e, Ros-Oton and Serra  \cite{Cabre-Ros-Oton-Serra}, Figalli, Maggi and Pratelli \cite{FMP}, Gentil \cite{Gentil}, Lam \cite{Lam}, Nguyen \cite{Nguyen}. Although Theorems \ref{log-Sobolev} and \ref{log-Sobolev-1} can be stated in the same general context, we avoid to do this for the sake of simplicity. Such an anisotropic extension relies on the definition of the dual norm $\|\cdot\|_*$ of an arbitrary norm $\|\cdot\|$ given by $\|X\|_*=\sup_{\|Y\|\leq 1}X\cdot Y$, where $X\cdot Y$ stands for the usual Euclidean inner product of $X,Y\in \mathbb R^n$, see Remark \ref{remark-anistropic}. It is worth to notice that this  extension does not require any modification of the optimal transport theory of Brenier-McCann (see  \cite{Brenier} and \cite{McCann}), the cost function still remaining the Euclidean one. 

As an application of the sharp $L^p$-log-Sobolev inequality (Theorem \ref{log-Sobolev}), we  establish a \textit{sharp weighted hypercontractivity estimate} for the Hopf-Lax semigroup related to Hamilton-Jacobi equation and characterize the equality cases. To be more precise, for $p>1$ and for a function  $g: E\to \mathbb R$ we recall the Hopf-Lax formula 
\begin{equation}\label{inf-convolution-0}
	{\bf Q}_{t}g(x)\coloneqq {\bf Q}_{t}^pg(x)=\inf_{y\in E}\left\{g(y)+\frac{|y-x|^{p'}}{p't^{p'-1}}\right\}, \ t\in (0,t_0),x\in E.
\end{equation}
Here $t_{0}>0$ is chosen such that ${\bf Q}_{t}g(x) \in \mathbb R$  for  $t \in (0, t_{0})$ and $x\in E$ (by convention, we consider ${\bf Q}_0g=g$). Clearly, $t_{0}= \infty$  whenever $g$ is a bounded Lipschitz function on $E$. Moreover, $(x,t)\mapsto {\bf Q}_{t}g(x)$ solves the classical Hamilton-Jacobi equation; see e.g.\ Gentil \cite{Gentil} or the book of Evans \cite{Evans}.  However, in order to characterize the equality cases in the hypercontractivity estimate (see Theorem \ref{theorem-main-3} below), we need a larger class of functions $g$ than bounded and Lipschitz,  that eventually contains the candidates for extremals in the hypercontractivity estimate. To achieve this, for a fixed $t_0>0$, we consider the family of functions
$$\mathcal  F_{t_0}(E)\coloneqq  \left\{ g:E\to \mathbb R:\begin{array}{lll}
	g {\rm \ is\ measurable,\ bounded\ from\ above\ and}\  \\
	{\rm there\ exists}\ x_0\in E {\rm \ such \ that} \ {\bf Q}_{t_0}g(x_0)>-\infty
\end{array}\right\}.
$$
In the sequel we show that if $g \in \mathcal  F_{t_0}(E)$ then ${\bf Q}_{t}g(x) \in \mathbb R$ for every $t\in (0,t_0)$ and $x\in E$, and the following weighted hypercontractivity estimate holds.

\begin{theorem}\label{theorem-main-3}
	Let $E\subseteq \mathbb R^n$ be an open convex cone and $\omega\colon E\to (0,\infty)$ be a  log-concave homogeneous weight of class $\mathcal C^1$ with degree $\tau\geq 0,$ and $p> 1.$ Given the numbers $t_0>0$ and $0<\alpha\leq \beta$, for every function $g\in \mathcal F_{t_0}(E)$ with $e^g\in  L^\alpha(\omega;E)$ and every $\tilde t\in (0,t_0)$ we have
	\begin{equation}\label{hyperc-estimate}
		\|e^{{\bf Q}_{\tilde t}g}\|_{L^{\beta}(\omega;E)}\leq \|e^{g}\|_{L^{\alpha}(\omega;E)}\left(\frac{\beta-\alpha}{\tilde t}\right)^{\frac{n+\tau}{p}\frac{\beta-\alpha}{\alpha\beta}}\frac{\alpha^{\frac{n+\tau}{\alpha\beta}(\frac{\alpha}{p}+\frac{\beta}{p'})}}{\beta^{\frac{n+\tau}{\alpha\beta}(\frac{\beta}{p}+\frac{\alpha}{p'})}}\left((p')^\frac{n+\tau}{p'}\Gamma\left(\frac{n+\tau}{p'}+1\right)\int_{B \cap E}\omega\right)^{\frac{\alpha-\beta}{\alpha\beta}}.
	\end{equation} 
	In addition, equality holds in \eqref{hyperc-estimate} for some $\tilde t\in (0,t_0)$ and $g\in \mathcal F_{\tilde t}(E)$ with $e^g\in  L^\alpha(\omega;E)$  if and only if $\alpha<\beta$ and 
	\begin{equation}\label{extremal-hyperc}
		g(x)=C-\frac{1}{p'}\left(\frac{\beta-\alpha}{\beta\tilde t}\right)^\frac{1}{p-1}|x+x_0|^{p'},\ \ x\in E,
	\end{equation}
for some $C\in \mathbb R$ and 
	\begin{itemize}
		\item $x_0\in -\partial E\cap \partial E$ and $\omega(x+x_0)=\omega(x)$ for every $x\in  E$ whenever $\tau>0$, and 
		\item $x_0\in -\overline E\cap\overline E$ and $\omega$ is constant in $E$ whenever $\tau=0$. 
	\end{itemize}
\end{theorem}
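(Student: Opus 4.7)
The plan is to derive \eqref{hyperc-estimate} from the sharp log-Sobolev inequality of Theorem~\ref{log-Sobolev} through the classical differentiation of a moving $L^q$-norm along the Hamilton--Jacobi flow, in the spirit of \cite{Gentil}. For $g\in\mathcal F_{\tilde t}(E)$, standard Hopf--Lax theory (see \cite{Evans}) yields that $Q_tg$ is locally Lipschitz on $(0,\tilde t\,]\times E$ and satisfies
\[
\partial_t Q_tg + \frac{1}{p}|\nabla Q_tg|^p = 0 \quad\text{a.e.\ on }(0,\tilde t)\times E,
\]
the Hamiltonian $H(\xi)=|\xi|^p/p$ being the Legendre dual of the Lagrangian $L(z)=|z|^{p'}/p'$ entering \eqref{inf-convolution-0}.

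Next I would choose the interpolating exponent $q(t)$ so that $1/q(t)$ is affine in $t$ from $1/\alpha$ to $1/\beta$; the crucial feature of this choice is that $q'(t)/q(t)^2 = (\beta-\alpha)/(\alpha\beta\tilde t)$ is constant. Set $\Phi(t)\coloneqq \|e^{Q_tg}\|_{L^{q(t)}(\omega;E)}$ and $H(t)\coloneqq \int_E e^{q(t)Q_tg}\omega\,dx$. Differentiation --- justifying first the exchange with the integral via the hypothesis $e^g\in L^\alpha(\omega;E)$ and the Lipschitz regularity of $Q_tg$ --- combined with the Hamilton--Jacobi equation gives
\[
\frac{\Phi'(t)}{\Phi(t)} = \frac{q'(t)}{q(t)^2}\cdot\frac{\mathrm{Ent}_\omega\bigl(e^{q(t)Q_tg}\bigr)}{H(t)} \;-\; \frac{1}{p}\cdot\frac{\int_E|\nabla Q_tg|^p e^{q(t)Q_tg}\omega\,dx}{H(t)},
\]
where $\mathrm{Ent}_\omega(f)=\int f\log f\,\omega\,dx-\bigl(\int f\omega\,dx\bigr)\log\bigl(\int f\omega\,dx\bigr)$. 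Applying Theorem~\ref{log-Sobolev} to $u=e^{q(t)Q_tg/p}$, for which $|\nabla u|^p=(q/p)^p e^{qQ_tg}|\nabla Q_tg|^p$, bounds the entropy from above by $H(t)\cdot\frac{n+\tau}{p}\log\bigl(\mathcal L_{\omega,p}(q/p)^pE(t)\bigr)$ with $E(t)=\int|\nabla Q_tg|^p e^{qQ_tg}\omega/H(t)$. The resulting bound on $\Phi'/\Phi$ is concave in $E(t)>0$; maximizing over $E(t)$ (equivalently, applying the tangent-line inequality $\log X\le\lambda X-1-\log\lambda$ at the optimal $\lambda$) eliminates the $E$-dependence and produces
\[
\frac{\Phi'(t)}{\Phi(t)}\le \frac{(n+\tau)q'(t)}{p\,q(t)^2}\left[\log\!\left(\mathcal L_{\omega,p}\,q(t)^{p-2}q'(t)\,\frac{n+\tau}{p^p}\right)-1\right].
\]

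Integration from $0$ to $\tilde t$ is then elementary: with the chosen $q$, each of $\int(q'/q^2)\,dt$, $\int(q'/q^2)\log q\,dt$ and $\int(q'/q^2)\log q'\,dt$ reduces to a closed form in $\alpha,\beta,\tilde t$. Substituting the explicit value of $\mathcal L_{\omega,p}$ from Theorem~\ref{log-Sobolev}, which carries the factor $\Gamma\!\bigl(\frac{n+\tau}{p'}+1\bigr)\int_{B\cap E}\omega$, the arithmetic collapses to precisely the logarithm of the right-hand side of \eqref{hyperc-estimate}.

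For the equality statement, both the log-Sobolev step and the concavity-in-$E$ step must be saturated for a.e.\ $t\in(0,\tilde t)$. Equality in Theorem~\ref{log-Sobolev} forces $x\mapsto e^{q(t)Q_tg(x)/p}$ to be one of the extremal Gaussians in \eqref{Gaussian}, and so $Q_tg(x)=-\frac{c(t)}{p'}|x+x_0(t)|^{p'}+k(t)$ with $(x_0(t),\omega)$ satisfying the conditions stated in Theorem~\ref{log-Sobolev}. A continuity-in-$t$ rigidity argument then shows that $x_0(t)$ is actually $t$-independent; plugging the quadratic ansatz into the Hamilton--Jacobi equation yields the ODE $c'(t)=c(t)^p/(p-1)$, hence $c(t)=1/(K-t)^{1/(p-1)}$, and the condition $q(\tilde t)=\beta$ together with the equality in the tangent-line step pins down $K=\beta\tilde t/(\beta-\alpha)$; evaluating at $t=0$ recovers the form \eqref{extremal-hyperc}. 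The main obstacles I expect are: (i) justifying the exchange of differentiation and integration in $\Phi'(t)$ under the broad hypothesis $g\in\mathcal F_{t_0}(E)$, which requires uniform-in-$t$ integrability of $e^{q(t)Q_tg}$ and of $|\nabla Q_tg|^p e^{q(t)Q_tg}$ on compact subintervals of $(0,\tilde t)$; and (ii) the rigidity step showing $x_0(t)$ is constant, since the admissible set of $x_0$ may itself contain a continuum (e.g.\ a face of $\partial E$), so more than mere continuity of $x_0(t)$ is needed.
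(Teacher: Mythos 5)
Your derivation of the inequality \eqref{hyperc-estimate} is essentially the paper's own argument: the same Gentil-style differentiation of $t\mapsto\|e^{{\bf Q}_tg}\|_{L^{q(t)}(\omega;E)}$ with $1/q(t)$ affine (the paper's $q(t)=\alpha\beta/((\alpha-\beta)t/\tilde t+\beta)$), the same application of Theorem \ref{log-Sobolev} to $u=e^{q(t){\bf Q}_tg/p}$ normalized, the same tangent-line linearization of the logarithm at the optimal point, and the same integration. Your formula for $\Phi'/\Phi$ and the optimized bound agree with the paper's \eqref{derivative-of -F}--\eqref{F-function}, and the integrability issues you flag in obstacle (i) are exactly what the paper's Proposition \ref{proposition-hyper} is designed to settle (monotonicity of $t\mapsto{\bf Q}_tg$ giving a.e.\ differentiability of the relevant integrals, hence $e^{q(t){\bf Q}_tg}\in W^{1,p}(\omega;E)$ for a.e.\ $t$). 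So the first half is sound and not genuinely different in route.

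The equality case, however, has a genuine gap that is not the one you flag in obstacle (ii). Saturating the log-Sobolev step for a.e.\ $t$ determines ${\bf Q}_tg$ for $t\in(0,\tilde t\,]$ (and your Hamilton--Jacobi ODE $c'=c^p/(p-1)$ is a correct and slightly slicker way than the paper's to pin down the coefficient $b_t=(K-t)^{-1/(p-1)}$ with $K=\beta\tilde t/(\beta-\alpha)$), but ``evaluating at $t=0$'' does \emph{not} recover $g$. Since $t\mapsto{\bf Q}_tg(x)$ is non-increasing, one only gets $g_0\coloneqq\lim_{t\to0^+}{\bf Q}_tg\leq g$ pointwise, and the inequality can be strict on a set of positive measure: the Hopf--Lax infimum is blind to raising $g$ above its $c$-concave envelope, so the whole family $\{{\bf Q}_tg\}_{t>0}$ cannot distinguish $g$ from $g_0$. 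The paper closes this by first treating the case where $-g$ is $c_{\tilde t}$-concave (so that $-g={\bf Q}_t(-{\bf Q}_tg)$ exactly, via \cite[Proposition 5.47]{Villani}), and then, for general $g$, observing that $-g\leq{\bf Q}_t(-{\bf Q}_tg)=-g_0$ together with the equality of the two norm ratios and ${\bf Q}_{\tilde t}g={\bf Q}_{\tilde t}g_0$ forces $\|e^g\|_{L^\alpha(\omega;E)}=\|e^{g_0}\|_{L^\alpha(\omega;E)}$, whence $g=g_0$ a.e. Some argument of this kind is indispensable; without it you have only characterized the Hopf--Lax regularization of the extremizers. Two smaller omissions: you do not treat the case $\alpha=\beta$ (the paper shows equality there would force $g$ constant, contradicting $e^g\in L^\alpha(\omega;E)$, which is why the theorem asserts $\alpha<\beta$ is necessary), and you do not verify the ``if'' direction that \eqref{extremal-hyperc} actually achieves equality.
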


 The proof of Theorem \ref{theorem-main-3} is based on the a.e.\ validity of the Hamilton-Jacobi equation associated with the Hopf-Lax solution $(x,t)\mapsto {\bf Q}_{t}g(x)$  and its integrability properties (see Proposition \ref{proposition-hyper}) combined with the weighted $L^p$-log-Sobolev inequality. The  equality cases in \eqref{hyperc-estimate} are obtained via the characterization of the extremal functions in the  $L^p$-log-Sobolev inequality formulated in Theorem \ref{log-Sobolev} and by using properties of the map  $(x,t)\mapsto {\bf Q}_{t}g(x)$.

The paper is organized as follows. In Section \ref{section-2} we present some basic properties of the weights, including a weighted entropy-type integrability, based on a two-weighted Sobolev inequality established in \cite{BGK_PLMS}. 
In Section \ref{section-3} we prove Theorem \ref{log-Sobolev}, where one of the crucial steps is an integration by parts formula (see Propositions \ref{prop:integrationbyparts}) for functions belonging to $W^{1,p}(\omega;E)$, $p>1$. Section \ref{section-4} is a kind of counterpart of  Section \ref{section-3}, where we prove Theorem \ref{log-Sobolev-1}, and one of the main ingredients is again an integration by parts formula (see Proposition \ref{prop:p=1}) for functions in  $BV(\omega;E)$.
In Section \ref{section-5} we present the proof of Theorem \ref{theorem-main-3}.

\section{Preparatory results} \label{section-2}
We start with an elementary observation related to (log-)concave homogeneous weights. 
 
\begin{proposition}\label{prop-log-concave}
	Let $E\subseteq \mathbb R^n$ be an open convex cone and $\omega\colon E\to (0,\infty)$ be a homogeneous weight of class $\mathcal C^1$ with degree $\tau\geq 0.$ 
	
	\begin{itemize}
		\item[(i)] 
		If $\tau> 0,$ the following statements are equivalent: 
		\begin{itemize}
			\item[(a)]  $\omega$ is log-concave in $E;$
			\item[(b)]   for every $x,y\in E$ one has 
			\begin{equation}\label{log-concave-charact}
				\log\left(\frac{\omega(y)}{\omega(x)}\right)\leq -\tau+\frac{\nabla \omega (x)\cdot y}{\omega (x)};
		\end{equation}
		\item[(c)] $\omega^\frac{1}{\tau}$ is concave in $E.$

		\end{itemize}
	Moreover, any of the above properties imply that  $0\leq \nabla \omega(x)\cdot y$ for every $x,y\in E$.  
	\item[(ii)] 
	If  $\omega$ is log-concave in $E$, then $\tau=0$  if and only if $\omega$ is constant in $E;$ in particular, if $E=\mathbb R^n$ then $\omega$ is constant in $\mathbb R^n$.
	\end{itemize}

\end{proposition}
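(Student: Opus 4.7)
The plan is to establish the three equivalences in (i) via the standard first-order characterization of concavity and a rescaling trick, derive the nonnegativity $\nabla\omega(x)\cdot y\geq 0$ from a one-dimensional asymptotic argument, and finally prove (ii) by combining $0$-homogeneity with the log-concavity inequality.

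For (a)$\Leftrightarrow$(b), since $\omega\in\mathcal C^1$, log-concavity of $\omega$ is equivalent to the first-order inequality $\log\omega(y)\leq \log\omega(x)+\nabla\log\omega(x)\cdot(y-x)$; rewriting $\nabla\log\omega=\nabla\omega/\omega$ and using Euler's identity $\nabla\omega(x)\cdot x=\tau\omega(x)$, valid for $\mathcal C^1$ $\tau$-homogeneous functions, immediately converts this into \eqref{log-concave-charact}. For (c)$\Rightarrow$(a), composing the concave positive $\omega^{1/\tau}$ with the concave increasing $\log$ yields concavity of $\tau^{-1}\log\omega$. For (a)$\Rightarrow$(c), set $f:=\omega^{1/\tau}$, which is positive, $1$-homogeneous and log-concave; given $x,y\in E$ and $\lambda\in(0,1)$, I would normalize $x':=x/f(x)$, $y':=y/f(y)$ (so $f(x')=f(y')=1$) and set $a:=\lambda f(x)$, $b:=(1-\lambda)f(y)$. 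Then
\[
\lambda x+(1-\lambda)y = (a+b)\left(\tfrac{a}{a+b}x'+\tfrac{b}{a+b}y'\right),
\]
and combining the $1$-homogeneity of $f$ with the multiplicative form of log-concavity at $x',y'$ gives $f(\lambda x+(1-\lambda)y)\geq (a+b)f(x')^{a/(a+b)}f(y')^{b/(a+b)}=a+b=\lambda f(x)+(1-\lambda)f(y)$, which is concavity. For the moreover, I would consider $\phi(t):=\omega(x+ty)$ for $t\geq 0$ (which stays in $E$ since $E$ is a convex cone); then $\log\phi$ is concave on $[0,\infty)$, whereas $\tau$-homogeneity yields $\phi(t)=t^\tau\omega(x/t+y)\to +\infty$ as $t\to\infty$. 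A concave function that diverges to $+\infty$ cannot be decreasing at $0$, so $(\log\phi)'(0)\geq 0$, i.e., $\nabla\omega(x)\cdot y\geq 0$.

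For part (ii), if $\omega$ is constant then $\omega(\lambda x)=\lambda^\tau\omega(x)$ with $\omega(x)>0$ forces $\tau=0$. Conversely, assume $\tau=0$; Euler gives $\nabla\omega(x)\cdot x=0$. Substituting $y\mapsto \lambda y$ in the first-order log-concavity inequality and using the invariance $\omega(\lambda y)=\omega(y)$ yields
\[
\log\omega(y)-\log\omega(x)\leq \lambda\, \nabla\log\omega(x)\cdot y \quad \text{for every } \lambda>0,
\]
and letting $\lambda\to 0^+$ gives $\omega(y)\leq\omega(x)$; swapping $x$ and $y$ produces the reverse inequality, hence $\omega$ is constant on $E$. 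Finally, if $E=\mathbb R^n$ then $0\in E$ and $\omega(0)=\lambda^\tau\omega(0)$ with $\omega(0)>0$ forces $\tau=0$, reducing to the previous case. I expect the main obstacle to be the implication (a)$\Rightarrow$(c): a naive Hessian computation produces $D^2\log\omega+\tau^{-1}\nabla\log\omega\otimes\nabla\log\omega$, whose negative semidefiniteness is not preserved after adding the rank-one term without a genuine use of homogeneity, so one has to proceed through the scaling identity above rather than by a direct second-derivative argument.
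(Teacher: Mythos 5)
Your proposal is correct and follows essentially the same route as the paper: the Euler-identity reduction for (a)$\Leftrightarrow$(b), the normalization-and-rescaling trick for (a)$\Rightarrow$(c) (your $a/(a+b)$ is exactly the paper's choice of $t$), and the $\lambda\to 0^+$ limit for part (ii) all coincide with the published argument. The only deviations are minor and both valid: you obtain $\nabla\omega(x)\cdot y\geq 0$ from one-dimensional concavity of $t\mapsto\log\omega(x+ty)$ together with the growth forced by $\tau$-homogeneity, whereas the paper reaches a contradiction by substituting $y:=\lambda y_0$ into \eqref{log-concave-charact} and letting $\lambda\to\infty$; and for $E=\mathbb R^n$ you use $0\in E$ to force $\tau=0$ directly, which is slightly more economical than the paper's argument via $\nabla\omega\equiv 0$.
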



\begin{proof} 
	(i) We assume that $\tau>0$. The equivalence between (a) and (b) is a consequence of the definition and the Euler's relation: $\nabla \omega(x) \cdot x = \tau \omega(x)$ for every $x\in E$ for the $\tau$-homogeneous function $\omega$. 
	
	The implication (c)$\implies$(a) is generically valid even in the absence of homogeneity. 
	
	 We now prove the implication  (a)$\implies$(c). For simplicity, let $v\coloneqq \omega^\frac{1}{\tau}$, which is log-concave in $E$ by assumption. 
	 Let $x_1,x_2\in E$ be two fixed elements and $v(x_1)=a_1>0$ and $v(x_2)=a_2>0.$ By the 1-homogeneity of $v$, one has that $v(x_1/a_1)=v(x_2/a_2)=1$, thus the log-concavity of $v$ implies that for every $t\in [0,1]$ we have
	$$v\left(t\frac{x_1}{a_1}+(1-t)\frac{x_2}{a_2}\right)\geq v^t\left(\frac{x_1}{a_1}\right)v^{1-t}\left(\frac{x_2}{a_2}\right)=1.$$
	If $\lambda\in [0,1]$ is arbitrarily fixed, let $t\coloneqq \frac{\lambda a_1}{\lambda a_1+(1-\lambda )a_2}\in [0,1]$ be in the above estimate; then, we obtain 
	$$v\left(\frac{\lambda x_1+(1-\lambda)x_2}{\lambda a_1+(1-\lambda )a_2}\right)\geq 1.$$
	  Again by the $1$-homogeneity of $v$, the latter relation becomes equivalent to $$v(\lambda x_1+(1-\lambda)x_2)\geq \lambda a_1+(1-\lambda )a_2=\lambda v(x_1)+(1-\lambda )v(x_2),$$ which is precisely the concavity of $v=\omega^\frac{1}{\tau}$. 
	
	By contradiction, let us assume that  there exist $x_0,y_0\in E$ such that $ \nabla \omega(x_0)\cdot y_0<0.$ We choose $x\coloneqq x_0$ and $y\coloneqq \lambda y_0$ with $\lambda>0$ in \eqref{log-concave-charact}; by the homogeneity of $\omega$, it turns out that 
	$$\tau \log \lambda + \log\left(\frac{\omega(y_0)}{\omega(x_0)}\right)\leq -\tau+\lambda\frac{\nabla \omega (x_0)\cdot y_0}{\omega (x_0)},\ \ \lambda>0.$$
	Letting $\lambda \to \infty,$ we get a contradiction. 
	
(ii)	If $\omega$ is constant, then trivially $\tau=0$. Conversely, if $\tau=0$, we have $\omega(\lambda y)=\omega(y)$ for every $\lambda>0$ and $y\in E$. Clearly, \eqref{log-concave-charact} is also valid (with $\tau=0$); in particular, by putting $y\coloneqq \lambda y$,  we obtain 
	$$	\log\left(\frac{\omega(y)}{\omega(x)}\right)\leq \lambda\frac{\nabla \omega (x)\cdot y}{\omega (x)}\ \ {\rm for\ all}\ \lambda>0,\ x,y\in E.$$
	Letting $\lambda\to 0$, the latter inequality implies that $\omega(y)\leq \omega(x)$ for every $x,y\in E$, which shows that $\omega$ is constant in $E$.

	Let $E=\mathbb R^n$. Since $0\leq \nabla \omega(x)\cdot y$ for every $x,y\in \mathbb R^n$ (similarly to the case (i)),  we necessarily have $\nabla \omega(x)=0$ for every $x\in \mathbb R^n$, i.e., $\omega$ is constant in $\mathbb R^n$. 
\end{proof}

\begin{remark}\rm 
	Large classes of weights verifying the properties of Proposition \ref{prop-log-concave} can be found in Cabr\'e, Ros-Oton and Serra \cite[Section 2]{Cabre-Ros-Oton-Serra} and Balogh, Guti\'errez and Krist\'aly \cite[Section 4]{BGK_PLMS}, including -- among others -- the monomial weight $\omega\colon E\to (0,\infty)$  defined as 
	$\omega(x)=x_1^{a_1}\dots x_n^{a_n}$ with $a_i\geq 0$ for every $i=1,...,n$ and $$E=\left\{x=(x_1,...,x_n)\in \mathbb R^n: x_i>0\ {\rm whenever}\ a_i>0\right\}.$$
	
\end{remark}

The following auxiliary result concerns a weighted entropy-type estimate which is crucial in the proof of our main theorems. To prove it, we need a two-weighted Sobolev inequality by Balogh, Guti\'errez and Krist\'aly \cite{BGK_PLMS} that we recall in the sequel.   

Let $E\subseteq \mathbb R^n$ be an open convex cone.
First, we assume that the two weights $\omega_1,\omega_2:E\to (0,\infty)$ satisfy the homogeneity condition \begin{equation}\label{eq:homogeneity of weights}
	\omega_i(\lambda\, x)=\lambda^{\tau_i}\,\omega_i(x)\ \ \textrm{for all}\ \lambda>0, x\in E,
\end{equation}
where the  parameters $\tau_1,\tau_2\in\mathbb R$ verify 
\begin{equation}\label{eq:p-range}
	1\leq  s<\tau_2+n\leq\tau_1+s+n,
\end{equation}
and
\begin{equation}\label{eq:p-range-bis}
	\tau_2 \geq \left(1-\frac{s}{n}\right)\tau_1,
\end{equation}
for some $s\in \mathbb R.$
Furthermore, for the given values $s, \tau_1, \tau_2$ as above choose  $t\in \mathbb R$  such that the following balance condition holds:
\begin{equation}\label{eq:dimensional balance}
	\dfrac{\tau_1+n}{t}=\dfrac{\tau_2+n}{s}-1.
\end{equation}
By relations \eqref{eq:dimensional balance} and (\ref{eq:p-range}), we obtain that 
$t=\frac{s(\tau_1+n)}{\tau_2+n-s}\geq s.$
The fractional dimension $n_a$ is given by  
\begin{equation} \label{eq:fracdim}
	\dfrac{1}{n_a}=\dfrac{1}{s}-\dfrac{1}{t}.
\end{equation}
By \eqref{eq:dimensional balance}, relation (\ref{eq:p-range-bis}) turns out to be equivalent to 
$n_a\geq n.$ The cases $n_a= n$ and $n_a>n$ should be discussed separately, both of them being crucial in our further investigations. It turns out that the weighted Sobolev inequality 
\begin{equation}
	\left(\int_E |v(x)|^t\,\omega_1(x) \,dx\right)^{1/t}\leq  K_0\,\left(\int_E |\nabla v(x)|^s\,\omega_2(x)\,dx\right)^{1/s}\ \ \textrm{for all}\ v\in C_c^\infty(\mathbb R^n), \tag{WSI}\label{WSI}
\end{equation}
holds, see  \cite[Theorem 1.1]{BGK_PLMS}, where $K_0>0$ is independent on $v\in C_c^\infty(\mathbb R^n)$, whenever
\begin{itemize}
	\item[(I)] either $n_a= n$ together with
	$\sup_{x\in E}\dfrac{\omega_1(x)^{1/t}}{\omega_2(x)^{1/s}}=:C\in (0,\infty)$ and
	\begin{equation}\label{c-1-feltetel}
		0
		\leq
		\left(\dfrac{1}{s'}\dfrac{\nabla \omega_1(x)}{\omega_1(x)}+\dfrac{1}{s}\dfrac{\nabla \omega_2(x)}{\omega_2(x)}\right)\cdot y
	\end{equation}
	for a.e. $x\in E$ and for all $y\in E$,
	\item[(II)] or $n_a> n$ and  there exists a constant $C_0>0$ such that 
	\begin{equation} \label{c-0-condition}
		\left(\left(\frac{\omega_2(y)}{\omega_2 (x)}\right)^\frac{1}{s}\left(\frac{\omega_1(x)}{\omega_1 (y)}\right)^\frac{1}{t}\right)^\frac{n_a}{n_a-n}\leq C_0 \left(\dfrac{1}{s'}\dfrac{\nabla \omega_1(x)}{\omega_1(x)}+\dfrac{1}{s}\dfrac{\nabla \omega_2(x)}{\omega_2(x)}\right)\cdot y,
	\end{equation} 
	for a.e. $x\in E$ and for all $y\in E$. 
\end{itemize}
Here, as before,  $s'=s/(s-1)$ denotes the conjugate of $s\geq 1$ (with $s'=+\infty$ if $s=1$).  Moreover, by density arguments, the inequality \eqref{WSI} is valid for functions $v:\mathbb R^n\to \mathbb R$ belonging to ${W}^{1,s}(\omega_2;E)$.  Note that by scaling arguments, \eqref{WSI} necessarily requires the dimensional balance condition  \eqref{eq:dimensional balance}.


After this preparatory part, we state the following result. 

\begin{proposition}\label{proposition-key}
	Let $p\geq 1$, $E\subseteq \mathbb R^n$ be an open convex cone and $\omega\colon E\to (0,\infty)$ be a  log-concave homogeneous weight of class $\mathcal C^1$ with degree $\tau\geq 0.$  Then for every $u\in {W}^{1,p}(\omega;E)$ such that $\displaystyle\int_E |u|^p\omega dx=1$  the following inequalities hold$:$
	\vspace{-0.3cm}
	\begin{equation} \label{ineq_1}
	\int_E |u|^p\omega \log(|u|^p\omega)<+\infty,
	\end{equation}
	\begin{equation} \label{ineq_2} 
	\int_E |u|^p\omega \log|u|^p<+\infty,
	\end{equation}
	\begin{equation} \label{ineq_3} 
	-\infty < \int_E |u|^p\omega \log \omega.
	\end{equation}
	In addition, for $p=1$, the same statements hold for functions  $u\in BV(\omega;E)$. 
\end{proposition}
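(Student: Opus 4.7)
The three finiteness claims follow by combining the two-weighted Sobolev inequality \eqref{WSI} of \cite{BGK_PLMS} with the elementary pointwise bounds
$$s\log s\le \varepsilon^{-1}s^{1+\varepsilon}\quad (s\ge 1,\ \varepsilon>0),\qquad s\log(1/s)\le \gamma^{-1}s^{1-\gamma}\quad (0<s<1,\ 0<\gamma<1).$$

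The key structural input is Proposition \ref{prop-log-concave}, which yields $\nabla\omega(x)\cdot y\ge 0$ for all $x,y\in E$; this makes the monotonicity hypothesis \eqref{c-1-feltetel}/\eqref{c-0-condition} accompanying \eqref{WSI} automatic as soon as the coefficient of $\nabla\omega/\omega$ is nonnegative, which is the case for all weight pairs of the form $(\omega^a,\omega)$ with $a>0$. The first step is then to apply \eqref{WSI} with $s=p$, $\omega_2=\omega$, and $\omega_1=\omega^a$ where $a>1$ is chosen so that the dimensional balance \eqref{eq:dimensional balance} and the range conditions \eqref{eq:p-range}--\eqref{eq:p-range-bis} are simultaneously satisfied and the output can be rewritten as $\int_E(|u|^p\omega)^{a}\,dx<+\infty$ (in the subcritical range $p<n$ the natural choice $a=n/(n-p)$ matches the exponents on the left-hand side, making $\int_E|u|^{pa}\omega^{a}\,dx$ coincide with $\int_E f^{a}\,dx$). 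Writing $f:=|u|^p\omega$ and $\varepsilon:=a-1>0$, we thus obtain $f\in L^{1+\varepsilon}(E)$.

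Granted this, \eqref{ineq_1} follows at once from $f\log_+f\le\varepsilon^{-1}f^{1+\varepsilon}$, since only the positive part of $f\log f$ can produce divergence at $+\infty$. For \eqref{ineq_2}, a further application of \eqref{WSI} with $\omega_1=\omega_2=\omega$ yields $u\in L^{p^*_\omega}(\omega;E)$ with $p^*_\omega:=p(n+\tau)/(n+\tau-p)>p$, so that H\"older interpolation with $\int_E|u|^p\omega\,dx=1$ produces $\int_E|u|^{p(1+\varepsilon)}\omega\,dx<+\infty$ for $\varepsilon>0$ small enough, and combining this with $|u|^p\omega\log_+|u|^p\le\varepsilon^{-1}|u|^{p(1+\varepsilon)}\omega$ concludes. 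For \eqref{ineq_3} the negative part of the integrand is supported on $\{\omega<1\}$, where $\omega\log(1/\omega)\le\gamma^{-1}\omega^{1-\gamma}$ reduces the task to bounding $\int_E|u|^p\omega^{1-\gamma}\,dx$; a final invocation of \eqref{WSI} with $\omega_1=\omega^{1-\gamma}$ and $\gamma$ chosen so that the balance delivers an admissible exponent, followed by a H\"older-type interpolation against $L^p(\omega;E)$, yields the required estimate, again thanks to log-concavity.

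The principal obstacle is to organise the parameter choices $(a,\varepsilon,\gamma)$ so that the range conditions \eqref{eq:p-range}--\eqref{eq:p-range-bis} and the monotonicity hypothesis (I) or (II) of \eqref{WSI} are simultaneously satisfied across the full admissible regime $1\le p<+\infty$, $\tau\ge 0$. The supercritical case $p\ge n+\tau$, in which \eqref{WSI} no longer provides a strict integrability gain at the same exponent, has to be handled separately, presumably through a Morrey-type embedding rendering $u$ essentially bounded so that $f=|u|^p\omega$ is pointwise controlled. The $BV$-version at $p=1$ then follows by the same scheme after a standard mollification/coarea approximation, with the weighted variation $\|Du\|_\omega(E)$ replacing $\int_E|\nabla u|\omega\,dx$ in every invocation of \eqref{WSI}.
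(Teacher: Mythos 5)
Your overall strategy for \eqref{ineq_1} and \eqref{ineq_2} (weighted Sobolev gain plus a pointwise bound on $s\log s$, equivalently Jensen) is the right one, and your subcritical computation with $s=p$, $\omega_1=\omega^{n/(n-p)}$ does check out. But there are two genuine gaps. The first is the supercritical regime: the proposition is claimed for \emph{all} $p\geq 1$, and once $p\geq n+\tau$ the condition $s<\tau_2+n$ in \eqref{eq:p-range} rules out $s=p$, so your main mechanism is unavailable exactly where you defer to a ``presumable'' Morrey embedding. That deferral does not close the argument: on an unbounded cone with a weight, boundedness of $u$ is not automatic, the critical case $p=n+\tau$ is covered by neither regime, and even granting $u\in L^\infty$ you would still need $\int_E|u|^p\omega\log_+\omega\,dx<+\infty$ to control $f\log_+f$ with $f=|u|^p\omega$, which boundedness of $u$ alone does not give. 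The paper removes the case distinction entirely by a small but decisive trick: it applies \eqref{WSI} always with $s=1$ to the auxiliary function $v\coloneqq |u|^p$, which lies in $W^{1,1}(\omega;E)$ for every $p\geq 1$ by H\"older (since $\nabla v=p|u|^{p-1}\nabla|u|$). With $(\omega_1,\omega_2)=(\omega^{n/(n-1)},\omega)$ this yields $f\in L^{n/(n-1)}$ and hence \eqref{ineq_1}; with $\omega_1=\omega_2=\omega$ it yields $v\in L^{(\tau+n)/(\tau+n-1)}(\omega;E)$ and hence \eqref{ineq_2}; no restriction on $p$ ever enters.

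The second gap is \eqref{ineq_3}. Your plan requires $\int_{\{\omega<1\}}|u|^p\omega^{1-\gamma}\,dx<+\infty$, but the interpolation you invoke between $\int_E|u|^p\omega\,dx=1$ and a \eqref{WSI}-type bound with weight $\omega^{1-\gamma}$ is not a standard H\"older interpolation (the exponent bookkeeping forces the trivial endpoint), and the natural H\"older splitting produces a factor $\int_{\{\omega<1\}}\omega^{1-\gamma}dx$ which is infinite already for $E=\{x_1>0\}$, $\omega=x_1$. The paper proves \eqref{ineq_3} by a completely different and more elementary route: fix $x_0\in E$ and use the log-concavity inequality \eqref{log-concave-charact} in the form $\log\omega(x)\geq\log\omega(x_0)+\tau-\frac{\nabla\omega(x)\cdot x_0}{\omega(x)}$, multiply by $|u|^p\omega$ and integrate; the only term to control is $\int_E|u|^p\nabla\omega\cdot x_0\,dx$, which is handled by integration by parts --- the boundary term $\int_{\partial E}|u|^p\omega\, x_0\cdot{\bf n}\,d\mathcal H^{n-1}$ is nonpositive because $E$ is a convex cone and $x_0\in E$, and the remaining term is finite by H\"older since $u\in W^{1,p}(\omega;E)$. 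You should replace your treatment of \eqref{ineq_3} by an argument of this kind, and restructure \eqref{ineq_1}--\eqref{ineq_2} around the substitution $v=|u|^p$ with $s=1$ so that no supercritical case arises.
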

{\it Proof.} We prove relation \eqref{ineq_1} first. We consider only the case when $\tau>0$; otherwise, if $\tau=0$, the function $\omega$ is constant and the claim holds by the Euclidean log-Sobolev inequality \eqref{e-sharp-log-Sobolev}. 
Since  $n\geq 2$, we may choose  the parameters $s=1$, $t= \frac{n}{n-1}$
and the weights $\omega_1=\omega^{\frac{n}{n-1}}$,  $ \omega_2=\omega.$
In particular, we have $\tau_1=\frac{n}{n-1}\tau$ and $\tau_2=\tau.$ Moreover,  simple computations confirm that the assumptions \eqref{eq:homogeneity of weights}--\eqref{eq:dimensional balance} are verified. 
Note that for the fractional dimension we have
$	\dfrac{1}{n_a}=\dfrac{1}{s}-\dfrac{1}{t}=\dfrac{1}{n},$
thus $n_a=n$. In addition, 
$$\dfrac{\omega_1(x)^{1/t}}{\omega_2(x)^{1/s}}=\dfrac{\omega(x)}{\omega(x)}=1,\ \ x\in E.$$
Finally, it remains to verify \eqref{c-1-feltetel}, which directly follows by Proposition \ref{prop-log-concave}. 
According to the above reasons, -- keeping the above notations, -- it follows by (I) that the  weighted Sobolev inequality \eqref{WSI}  holds, i.e., 
\begin{equation}\label{Sobolev-modified}
		\left(\int_E |v(x)|^{\frac{n}{n-1}}\,\omega(x)^{\frac{n}{n-1}} \,dx\right)^{\frac{n-1}{n}}\leq  K_0\,\int_E |\nabla v(x)|\,\omega(x)\,dx\ \ \textrm{for all}\ v\in {W}^{1,1}(\omega;E).
\end{equation}

 Fix $u\in {W}^{1,p}(\omega;E)$ such that 
$\displaystyle\int_E |u|^p\omega dx=1$ and let $v\coloneqq |u|^p.$ We  prove that $v\in {W}^{1,1}(\omega;E).$ 
Let us assume first that $p>1$.  We observe that $$	\int_E | v(x)|\,\omega(x)\,dx=\int_E | u(x)|^p\,\omega(x)\,dx=1.$$
Moreover, since $\nabla v=p|u|^{p-1}\nabla |u|$, by H\"older's inequality and  $ u\in {W}^{1,p}(\omega;E)$ we have that  
\begin{eqnarray*}
	\int_E |\nabla v(x)|\,\omega(x)\,dx&=&p\int_E|\nabla u(x)| | u(x)|^{p-1}\,\omega(x)\,dx\\&\leq& p\left(\int_E |\nabla u(x)|^{p}\,\omega(x)\,dx\right)^\frac{1}{p}\left(\int_E | u(x)|^{p}\,\omega(x)\,dx\right)^{\frac{1}{p'}}<+\infty.
\end{eqnarray*}
Therefore, $ v\in {W}^{1,1}(\omega;E)$. Now, by \eqref{Sobolev-modified} we obtain 
$$\int_E |u(x)|^{p\frac{n}{n-1}}\,\omega(x)^{\frac{n}{n-1}} \,dx=\int_E |v(x)|^{\frac{n}{n-1}}\,\omega(x)^{\frac{n}{n-1}} \,dx <+\infty.$$
By Jensen's inequality and the above estimate we have that
$$\frac{1}{n-1} \displaystyle \int_E |u|^p\omega \log(|u|^p\omega)=\displaystyle \int_E |u|^p\omega \log\left((|u|^p\omega)^\frac{1}{n-1}\right)\leq \log\left(\int_E |u(x)|^{p\frac{n}{n-1}}\,\omega(x)^{\frac{n}{n-1}}\right)<+\infty,$$
which concludes the proof of  \eqref{ineq_1}.

The proof of \eqref{ineq_2} conceptually is similar to the above argument. In this case we choose the parameters $s=1$, $t= \frac{\tau+n}{\tau+n-1}$ and the weights $\omega_1 = \omega_2 = \omega$; therefore, 
$\tau_1=\tau_2=\tau$ and the fractional dimension is $n_a=n+\tau>n.$ Moreover, due to Proposition \ref{prop-log-concave}, inequality \eqref{c-0-condition} becomes equivalent to the concavity of $\omega^\frac{1}{\tau}$  once we choose  $C_0=\frac{1}{\tau}$. Therefore, by (II)  the Sobolev inequality  \eqref{WSI} reads as
\begin{equation} \label{Sobolev-modified-2}
		\left(\int_E |v(x)|^ \frac{\tau+n}{\tau+n-1}\,\omega(x) \,dx\right)^{ \frac{\tau+n-1}{\tau+n}}\leq  K_0\,\int_E |\nabla v(x)|\,\omega(x)\,dx\ \ \textrm{for all}\ v\in {W}^{1,1}(\omega;E).
\end{equation}
In the same way as before we can combine Jensen's inequality and the estimate \eqref{Sobolev-modified-2} to deduce that
$$\frac{1}{\tau+n-1} \displaystyle \int_E |u|^p\omega \log|u|^p=\displaystyle \int_E |u|^p\omega \log\left((|u|^p)^\frac{1}{\tau+n-1}\right)\leq \log\left(\int_E |u(x)|^{p\frac{\tau+n}{\tau+n-1}}\,\omega(x)\right)<+\infty,$$
which concludes the proof of \eqref{ineq_2}.

When $p=1$, by standard approximation arguments  the inequality \eqref{Sobolev-modified} extends to functions $v\in  BV(\omega;E)$ by replacing the right hand side to the $\omega$-bounded variation of $v\in  BV(\omega;E)$, see \cite[Theorem 5.1]{BBF}.  The rest of the proof of inequalities \eqref{ineq_1} and \eqref{ineq_2}  is similar to the case $p>1$. 

Finally we prove inequality \eqref{ineq_3}. Let us fix a point $x_0 \in E$. By the log-concavity of $\omega$ (see \eqref{log-concave-charact}) we can write 
$$ \log \omega(x_0) +\tau \leq \log \omega(x) + \frac{\nabla \omega(x)}{\omega(x)}\cdot x_0  \ \ \text{for all} \ x \in E.$$
Multiplying by $u^p \omega$ the latter inequality and integrating over $E$ we obtain 
$$(\log \omega(x_0) + \tau) \int_E |u|^p \omega  - \int_E |u|^p \nabla\omega(x) \cdot x_0 \leq \int_E |u|^p \omega \log \omega.$$
Since $\displaystyle\int_E |u|^p\omega dx=1$,   we only need to check that 
$$ \int_E |u|^p \nabla\omega(x) \cdot x_0 < \infty.$$
To verify this, we integrate by parts to obtain
$$ \int_E |u|^p \nabla\omega(x) \cdot x_0 \, dx= \int_{\partial E} |u|^p \omega x_0\cdot \textbf{n} \, d\mathcal H^{n-1} -p\int_E |u|^{p-1}(\nabla|u| \cdot x_0) \omega \,dx ,$$
where $\textbf{n}(x)$ is the unit outer normal vector to $E$ at $x\in \partial E$. 
Since $x_0 \in E$, by the convexity of the cone $E$ we have that the vectors $x_0$ and $\textbf{n}(x)$ 
 form and obtuse angle, i.e., 
$x_0 \cdot \textbf{n}(x) \leq 0$ for all points $x\in \partial E$ where the normal vector $\textbf{n}(x)$ exists;  thus the first integral on the right hand side is non-positive. Therefore, it is enough to estimate the second integral from above, which is obtained  by H\"older's inequality, i.e.,   
$$-\int_E |u|^{p-1}(\nabla|u| \cdot x_0) \omega \,dx \leq  |x_0| \int_E |u|^{p-1} |\nabla u| \omega \,dx \leq 
|x_0| \left(\int_E |u|^p \omega\right)^{\frac{1}{p'}} \left(\int_E|\nabla u|^p\omega\right)^{\frac{1}{p}} < \infty,$$
finishing the proof in the case $p>1$. If $p=1$, we do not need to apply H\"older's inequality but we need to perform an approximation argument and work with the $\omega$-bounded variation of measure as indicated before. 
\hfill $\square$\\

	
\section{Proof of Theorem \ref{log-Sobolev}: the case $p>1$} \label{section-3}

The main idea of the application of the optimal mass transport method in proving geometric inequalites lies in the fact that the optimal transport map is given as the gradient of a convex function $\phi: E \to \mathbb R$. We recall that by the theorem of Alexandrov \cite{EG} a convex function is twice differentiable almost everywhere. We denote by 
$\Delta_{\mathcal D'}\phi$ the distributional Laplacian of $\phi$; this is a positive Radon measure whose absolutely continuous part is denoted by $\Delta_A\phi$ and is called the Alexandrov Laplacian of $\phi$. 
Clearly  the inequality $$\Delta_A\phi\leq \Delta_{\mathcal D'}\phi $$
holds in the sense of distributions. 

Before presenting the proof of Theorem \ref{log-Sobolev}, we need an integration by part formula for functions $u\in W^{1,p}(\omega;E)$ whose proof is inspired by  Cordero-Erausquin, Nazaret and Villani \cite{CENV} and Nguyen \cite{Nguyen}. 

\begin{proposition}\label{prop:integrationbyparts}
	Let $p>1$, $u\in W^{1,p}(\omega;E)$ be non-negative,  $\phi\colon \mathbb R^n\to \mathbb R$ be a convex function such that $u^{p-1}\nabla\phi\in L^{p'}(\omega;E)$ and  $\nabla\phi(x)\in \overline E$ for a.e. $x\in \overline E$. 	
	Then we have
	\begin{equation}\label{eq:Byparts}
		\int_E u^p\omega \Delta_A\phi dx\leq -p\int_E u^{p-1}\omega \nabla \phi \cdot \nabla u dx -\int_E u^p \nabla \omega \cdot \nabla \phi dx.
	\end{equation}

\end{proposition}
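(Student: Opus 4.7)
My plan is to deduce \eqref{eq:Byparts} from classical integration by parts via a double approximation, exploiting both the inequality $\Delta_A\phi\, dx \leq d\Delta_{\mathcal{D}'}\phi$ in the sense of measures and the obtuse-angle property of the outer normal $\mathbf{n}$ to $\partial E$ paired with the assumption $\nabla\phi \in \overline E$ a.e.

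First, I would regularize $\phi$ by convolution with a radial mollifier, $\phi_\varepsilon\coloneqq \phi * \rho_\varepsilon$, yielding smooth convex functions with $\nabla\phi_\varepsilon \to \nabla\phi$ a.e.\ and in $L^1_{\mathrm{loc}}$ and with $\Delta\phi_\varepsilon\, dx \rightharpoonup d\Delta_{\mathcal{D}'}\phi$ weakly as positive Radon measures; the convexity of $\overline E$ guarantees $\nabla\phi_\varepsilon(x)\in \overline E$ in the relevant regime, since $\nabla\phi_\varepsilon$ is a weighted average of values of $\nabla\phi$ taken in $\overline E$. Next, I would approximate the non-negative $u \in W^{1,p}(\omega;E)$ by functions $u_k$ smooth on $\overline E$ and belonging to $W^{1,p}(\omega;E)$, via truncation in a large ball combined with convolution. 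Classical integration by parts on $E$ then reads
\begin{equation*}
\int_E u_k^p\,\omega\,\Delta\phi_\varepsilon\, dx = -p\int_E u_k^{p-1}\,\omega\,\nabla u_k\cdot \nabla\phi_\varepsilon\, dx - \int_E u_k^p\,\nabla\omega\cdot\nabla\phi_\varepsilon\, dx + \int_{\partial E} u_k^p\,\omega\,(\nabla\phi_\varepsilon\cdot \mathbf{n})\, d\mathcal{H}^{n-1}.
\end{equation*}

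The boundary integral is non-positive: at $\mathcal{H}^{n-1}$-a.e.\ $x \in \partial E$, the convexity of $E$ together with $0 \in \overline E$ yields $\mathbf{n}(x)\cdot y \leq 0$ for every $y \in \overline E$, and since $\nabla\phi_\varepsilon(x) \in \overline E$ the integrand is pointwise non-positive. Passing first $k \to \infty$ and then $\varepsilon \to 0$ uses H\"older's inequality together with the hypotheses $u \in W^{1,p}(\omega;E)$ and $u^{p-1}\nabla\phi \in L^{p'}(\omega;E)$ to control the two interior terms on the right, while on the left the weak convergence $\Delta\phi_\varepsilon\, dx \rightharpoonup d\Delta_{\mathcal{D}'}\phi$ combined with the Radon--Nikodym decomposition $d\Delta_{\mathcal{D}'}\phi = \Delta_A\phi\, dx + d\mu_s$ (with singular $\mu_s \geq 0$) furnishes
\begin{equation*}
\int_E u^p\omega\,\Delta_A\phi\, dx \leq \int_E u^p\omega\, d\Delta_{\mathcal{D}'}\phi.
\end{equation*}
Discarding the non-positive boundary term and the non-negative singular part then yields \eqref{eq:Byparts}.

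The main obstacle will be making the double limit rigorous in the non-compact setting of $E$: one must dominate $u_k^{p-1}\omega\,|\nabla u_k|\,|\nabla\phi_\varepsilon|$ and $u_k^p\,|\nabla\omega|\,|\nabla\phi_\varepsilon|$ uniformly in both $k$ and $\varepsilon$. This is achieved through H\"older, the Jensen-type bound $|\nabla\phi_\varepsilon| \leq |\nabla\phi| * \rho_\varepsilon$ available for mollifications of convex functions, and Lebesgue's dominated convergence theorem, making crucial use of the integrability hypothesis $u^{p-1}\nabla\phi \in L^{p'}(\omega;E)$. A secondary subtlety is justifying the action of the Radon measure $\Delta_{\mathcal{D}'}\phi$ against the non-compactly-supported function $u^p\omega$, which can be resolved by yet another truncation in $E$ and a monotone convergence argument exploiting the positivity of both $u^p\omega$ and $\Delta_{\mathcal{D}'}\phi$.
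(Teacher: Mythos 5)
Your overall architecture (mollify $\phi$, approximate $u$, integrate by parts with an explicit boundary term on $\partial E$, use the obtuse-angle property of the cone, then pass to the limit via $\Delta_A\phi\,dx\leq\Delta_{\mathcal D'}\phi$) is a legitimate alternative to the paper's argument, but two of your steps fail as stated, and both failures occur exactly on $\partial E$, which is where your route departs from the paper's. First, the claim that $\nabla\phi_\varepsilon(x)\in\overline E$ because $\nabla\phi_\varepsilon$ is ``a weighted average of values of $\nabla\phi$ taken in $\overline E$'' is false precisely where you need it: for $x\in\partial E$ the convolution $\nabla\phi_\varepsilon(x)=\int\nabla\phi(x-y)\rho_\varepsilon(y)\,dy$ averages over the full ball $B(x,\varepsilon)$, at least half of which (by the supporting-hyperplane property of the convex set $\overline E$) lies outside $\overline E$, where the hypothesis gives no information on $\nabla\phi$. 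Nor can you recover the sign of $\nabla\phi_\varepsilon\cdot\mathbf n$ on $\partial E$ in the limit $\varepsilon\to 0$, since the non-differentiability set of a convex function can have positive $\mathcal H^{n-1}$-measure. This is repairable --- mollify with an inward shift, e.g.\ $\phi_\varepsilon(x)\coloneqq(\phi\star\rho_\varepsilon)(x+\varepsilon e)$ for a fixed $e\in E$ with $B(e,1)\subset E$, so the average is taken over $B(x+\varepsilon e,\varepsilon)\subset E$ for every $x\in\overline E$ --- but the symmetric mollification as written does not work. Second, producing $u_k$ ``smooth on $\overline E$'' by ``truncation in a large ball combined with convolution'' does not give convergence to $u$ in $W^{1,p}(\omega;E)$: convolving the zero-extension of a Sobolev function on $E$ creates a spurious gradient concentrated near $\partial E$. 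One must first push the support strictly into $E$ (by a dilation about an interior point of the cone, as the paper does with $u_k\bigl(x_0+\frac{x-x_0}{1-\varepsilon}\bigr)$), and once the approximants are compactly supported in $E$ your boundary integral vanishes identically --- at which point you have reproduced the paper's proof rather than an alternative to it. You would also need to justify the Gauss--Green formula with a trace term on the merely Lipschitz, unbounded boundary of $E$, together with the behaviour of $\omega$ and of the trace of $\nabla\phi_\varepsilon$ there; the paper never has to face any of this.

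For contrast: the paper's proof never touches $\partial E$. It multiplies $u$ by cut-offs $\theta_k(x)=\theta_k^0(d(x,\partial E))$ vanishing near the boundary, truncates and mollifies to land in compactly supported smooth functions on $E$, and applies the divergence theorem with no boundary term. The geometric input $\nabla\phi(x)\in\overline E$ enters not through a boundary integral but through the extra term produced when the cut-off is differentiated in the final limit $k\to\infty$, namely $-u(x)(\theta_k^0)'(d(x,\partial E))\,\mathbf n(x^*)\cdot\nabla\phi(x)\geq 0$, whose sign is exactly your obtuse-angle observation applied at the nearest boundary point $x^*$. That device is what allows the limits to be taken with only Fatou's lemma, weak convergence of the gradients and dominated convergence, and it is the piece your proposal is missing.
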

\begin{proof}
	Let $S\coloneqq  {\rm int}\{x\in \mathbb R^n: \phi(x)<+\infty\}$; by assumption, $E\cap S$ is open, convex and contains the support of $u$ except, possibly, for a Lebesgue null set.
	
	For every $k\geq 1$, let $\theta_k^0:[0,\infty)\to [0,1]$ be a non-decreasing $\mathcal C^\infty$ function such that $\theta_k^0(s)= 0$ for $s\leq \frac{1}{2k}$ and $\theta_k^0(s)= 1$ for $s\geq \frac{1}{k}$; in addition, let $\theta_k:E\to [0,1]$  be the locally Lipschitz function $\theta_k(x)=\theta_k^0(d(x,\partial E))$, $x\in E.$ 
	Let $u_k=u\theta_k$, $k\geq 1$. A simple argument shows that $u_k\in W^{1,p}(\omega;E)$ for every $k\geq 1$ and ${\rm supp}u_k\subseteq S_k$ where $S_k=\{x\in E:d(x,\partial E)\geq \frac{1}{2k}\}.$  Furthermore, for every fixed $k\geq 1$, by a trivial extension, the function  $u_k$  belongs to the usual Sobolev space $W^{1,p}(\mathbb R^n)$ as well.

	%

	%
	

	Consider now a cut-off function $\chi\colon \mathbb R^n\to [0,1]$ of class $\mathcal C^\infty$ such that $\chi\equiv 1$ on $B=B(0,1)$, $\chi\equiv 0$ on $\mathbb R^n\setminus B(0,2)$. 
	Fix $x_0\in E$ and $k\geq 1$. For 	$\varepsilon\in(0,1)$  small enough, we define 
	\[
	u_{k,\varepsilon}(x)=\min\left\{u_k\left(x_0+\frac{x-x_0}{1-\varepsilon}\right),\chi(\varepsilon x)u_k(x)\right\},\  \ x\in E.
	\]
	Then $u_{k,\varepsilon}\geq 0$ has compact support in $E\cap S$ and since $|\nabla\chi|\leq C_0$ for some $C_0>0$, one has that $u_{k,\varepsilon} \in W^{1,p}(\omega;E)\cap W^{1,p}(\mathbb R^n)$. 
	
	Given $\kappa\in C_c^\infty(B)$ be a non-negative function such that $\displaystyle\int_{B} \kappa  dx=1$, we consider the convolution kernel $\kappa_\delta(x)=\frac{1}{\delta^{n}}\kappa( x/\delta)$, $\delta>0,$  and define the convolution function  $$u_{k,\varepsilon}^\delta(x)=(u_{k,\varepsilon}\star \kappa_\delta)(x)\coloneqq \int_E u_{k,\varepsilon}(y)\kappa_\delta(x-y) dy\geq 0.$$
	It is standard that the functions $u_{k,\varepsilon}^\delta$ are smooth on $E$ for every  $\varepsilon\in (0,1)$ belonging to the above range, and $\delta>0$, and $u_{k,\varepsilon}^\delta$ converges to $u_{k,\varepsilon}$ in $W^{1,p}(\omega;E)$ as $\delta\to 0$.  Moreover, since $\phi$ is convex, and ${\rm supp} u_{k,\varepsilon}^\delta$ is compact in $E$, then $\nabla\phi$ is essentially bounded on ${\rm supp}(u_{k,\varepsilon}^\delta)$ for every sufficiently small $\delta>0$.
	
	The idea is to apply for the smooth function $u_{k,\varepsilon}^\delta$ the usual divergence theorem and pass to the limit as $\delta\to 0$, $\varepsilon \to 0$ and $k\to \infty$, in this order, obtaining the expected inequality \eqref{eq:Byparts}.  
	
	Taking into account that $u_{k,\varepsilon}^\delta=0$ on $\partial E$, and $\Delta_A\phi\leq \Delta_{\mathcal D'}\phi$ in the distributional sense, the divergence theorem implies 
		\begin{eqnarray}\label{eq:GaussGreen}
			\nonumber	\int_E (u_{k,\varepsilon}^\delta)^p\omega \Delta_A\phi dx&\leq& \int_E (u_{k,\varepsilon}^\delta)^p\omega \Delta_{D'}\phi dx\\&=& -p\int_E (u_{k,\varepsilon}^\delta)^{p-1}\nabla u_{k,\varepsilon}^\delta\cdot \nabla \phi \omega dx-\int_E (u_{k,\varepsilon}^\delta)^p\nabla \omega \cdot \nabla \phi dx.
		\end{eqnarray}
	Thus, by inequality \eqref{eq:GaussGreen} we have
	\begin{equation}\label{eq:GaussGreen2}
		\int_E (u_{k,\varepsilon}^\delta)^p\left(\omega \Delta_A\phi +\nabla \omega \cdot \nabla \phi\right)dx\leq-p\int_E (u_{k,\varepsilon}^\delta)^{p-1}\nabla u_{k,\varepsilon}^\delta\cdot \nabla \phi \omega dx.
	\end{equation}
	
Since $u_{k,\varepsilon}^\delta$ converges to $u_{k,\varepsilon}$ in $W^{1,p}(\omega;E)$, one has that 
	$\nabla u_{k,\varepsilon}^\delta$ converges to $\nabla u_{k,\varepsilon}$ in $L^p(\omega;E)$  and $(u_{k,\varepsilon}^\delta)^{p-1}$ converges to $(u_{k,\varepsilon})^{p-1}$ in $L^{p'}(\omega;E)$ whenever $\delta\to 0$, the latter property coming from the property of superposition operators, see e.g.\ Willem \cite[Appendix A]{Willem}. In addition, since 
	$\nabla \phi$ is essentially bounded on ${\rm supp}(u_{k,\varepsilon}^\delta)$, we have that
	\[
	\int_E (u_{k,\varepsilon}^\delta)^{p-1}\nabla u_{k,\varepsilon}^\delta\cdot \nabla \phi \omega dx\rightarrow \int_E (u_{k,\varepsilon})^{p-1}\nabla u_{k,\varepsilon}\cdot \nabla \phi \omega dx \quad \text{as $\delta\to0$}.
	\]

%

Since $\phi$ is convex, then $\Delta_A\phi\geq 0$; moreover, by Proposition \ref{prop-log-concave} we also have that   $\nabla\phi\cdot \nabla\omega\geq 0$ a.e.\ on $E$, since $\nabla\phi(x)\in \overline E$ for a.e.\ $x\in \overline E$. Therefore, as $\delta\to 0$, the latter properties together with \eqref{eq:GaussGreen2} and  Fatou's lemma imply  that
	\begin{equation}\label{eq:GaussGreen3}
		\int_E u_{k,\varepsilon}^p\left(\omega \Delta_A\phi +\nabla \omega \cdot \nabla \phi\right)dx\leq-p\int_E u_{k,\varepsilon}^{p-1}\nabla u_{k,\varepsilon}\cdot \nabla \phi \omega dx.
	\end{equation}
	
	In the sequel we pass to the limit in \eqref{eq:GaussGreen3} as $\varepsilon\to 0.$ By the definition of $u_{k,\varepsilon}$, we first observe that --  extracting eventually a subsequence of $\varepsilon=(\varepsilon_{k,l})_{l\in \mathbb N}$ -- the sequence $u_{k,\varepsilon}$ converges to $u_k$ a.e.\ as $\varepsilon\to 0.$ 
	%
%
	By definition one has $ u_{k,\varepsilon}\leq u_k\leq u$ and recall that by assumption $u^{p-1}\nabla\phi\in L^{p'}(\omega;E)$;  thus, by the dominated convergence theorem we have that
	\[
	u_{k,\varepsilon}^{p-1}\nabla\phi\to u_{k}^{p-1}\nabla\phi \quad \text{in $L^{p'}(\omega;E)$ as $\varepsilon\to 0$}.
	\]
	One can also prove that $\nabla 	u_{k,\varepsilon}$ converges to $\nabla 	u_{k}$ in the sense of distributions and $\nabla 	u_{k,\varepsilon}$ is uniformly bounded in $L^p(E)$ w.r.t. $\varepsilon>0;$ hence, $\nabla 	u_{k,\varepsilon}$ converges weakly in $L^p(E)$ to $\nabla u_k$ as $\varepsilon\to 0.$
	Combining these facts, we obtain that
	\[
	\int_E u_{k,\varepsilon}^{p-1}\nabla u_{k,\varepsilon} \cdot \nabla \phi \omega dx \to \int_E u_k^{p-1}\nabla u_k\cdot \nabla \phi \omega dx \quad \text{as $\varepsilon\to 0$}.
	\]
	Taking into account Fatou's lemma and letting $\varepsilon\to 0$ in \eqref{eq:GaussGreen3} we get
	\begin{equation}\label{final-1}
		\int_E u_{k}^p\left(\omega \Delta_A\phi +\nabla \omega \cdot \nabla \phi\right)dx\leq-p\int_E u_{k}^{p-1}\nabla u_{k}\cdot \nabla \phi \omega dx.
	\end{equation}
	
	Now, it remains to take the limit $k\to \infty$ in \eqref{final-1}. To do this, we first observe that since $x\mapsto d(x,\partial E)$ is locally Lipschitz, it is differentiable a.e.\ in $E$; furthermore, for a.e.\ $x\in E$ one has that $\nabla d(\cdot,\partial E)(x)=-{\bf n}(x^*)$, where $x^*\in \partial E$ is the unique point with  $|x-x^*|=d(x,\partial E)$ and $\textbf{n}(x^*)$ is the unit outer normal vector at $x^*\in \partial E$. Since $E$ is convex and $\nabla\phi(x)\in \overline E$ for a.e.\ $x\in \overline E$, it turns out that 
	$\textbf{n}(x^*)\cdot \nabla\phi(x)\leq 0$ for a.e. $x\in E$. In particular,  the monotonicity of $\theta_k^0$ implies that for a.e. $x\in E$ 
	one has 
	\begin{eqnarray*}
		\nabla u_{k}(x)\cdot \nabla \phi(x)&=&\theta_k(x)\nabla u(x)\cdot \nabla \phi(x)-u(x)(\theta_k^0)'(d(x,\partial E))\textbf{n}(x^*)\cdot \nabla \phi(x)\\&\geq &\theta_k(x)\nabla u(x)\cdot \nabla \phi(x).
	\end{eqnarray*}
	The above arguments together with 		
	%
	\eqref{final-1} imply
	$$\int_E \theta_k^p u^p\left(\omega \Delta_A\phi +\nabla \omega \cdot \nabla \phi\right)dx\leq-p\int_E \theta_k^pu^{p-1}\nabla u\cdot \nabla \phi \omega dx.$$
	Passing to the limit  in the latter inequality as $k\to \infty$ and taking into account that $\theta_k\to 1$ for a.e. $x\in E$ as $k\to \infty$,  Fatou's lemma  and the dominated convergence theorem imply the desired inequality \eqref{eq:Byparts}. 
\end{proof}

We are ready to prove Theorem \ref{log-Sobolev}; we distinguish the cases when  $\tau>0$ and $\tau=0$, respectively.  
\subsection{Proof of the inequality \eqref{sharp-log-Sobolev}: case $p>1$, $\tau>0$}\label{subsection-3-1-1}

Fix  $u\in {W}^{1,p}(\omega;E)$ such that $\displaystyle\int_E |u|^p\omega dx=1.$ On account of relation  \eqref{ineq_2} by Proposition \ref{proposition-key},  we have  $\mathcal E_{\omega,E}(|u|^p)<+\infty$. 
 
  We first notice that when  $\mathcal E_{\omega,E}(|u|^p)=\displaystyle	\int_E |u|^p\log |u|^p \omega dx=-\infty$, we have nothing to prove in  \eqref{sharp-log-Sobolev}. 
 Accordingly, we may assume in the sequel that 
 \begin{equation}\label{entropy-1}
- \infty <\mathcal E_{\omega,E}(|u|^p)< +\infty.
 \end{equation}
 Using this relation, combined with relations \eqref{ineq_1}-\eqref{ineq_3} from Proposition \ref{proposition-key}, we can conclude that 
 \begin{equation}\label{J-functional}
 	-\infty<\int_E |u|^p\omega\log\omega dx<+\infty.
 \end{equation}
 
 We begin now  by introducing some  constants that will be used throughout the proof. 
 Let $\omega_{SE}\coloneqq \displaystyle\int_{\mathbb S^{n-1}\cap E}\omega$ and for further convenience, we introduce the real numbers $C_i=C_i(\omega,E,n,p,\tau)$, $i=1,...,4,$ as follows: $$C_1\coloneqq  p'\left(\omega_{SE}\Gamma\left(\frac{n+\tau}{p'}\right)\right)^{-1},$$
$$C_2\coloneqq \log C_1 -C_1\int_Ee^{-{|y|^{p'}}}|y|^{p'} \omega(y)dy=\log\left(p'\left(\omega_{SE}\Gamma\left(\frac{n+\tau}{p'}\right)\right)^{-1}\right)-\frac{n+\tau}{p'},$$
$$C_3\coloneqq  C_2+C_1\int_Ee^{-{|y|^{p'}}} \omega(y)\log \omega(y) dy,$$
and $$C_4\coloneqq  pC_1^\frac{1}{p'}\left(\int_Ee^{-{|y|^{p'}}}|y|^{p'} \omega(y)dy\right)^\frac{1}{p'}=p\left(\frac{n+\tau}{p'}\right)^\frac{1}{p'}.$$
We notice that the numbers $C_i\in \mathbb R$ are well-defined, $i=1,...,4.$

 We consider the scaled function $u_t(x)=t^\alpha u(t x)$ for $t>0$, $x\in E;$ by the homogeneity of $\omega$ we observe that $\displaystyle\int_E |u_t|^p\omega dx=1$ for every $t>0$ if and only if $\alpha p=n+\tau.$ By standard scaling arguments we also have that
 \begin{equation}\label{scaling-1}
 	\mathcal E_{\omega,E}(|u_t|^p)=\mathcal E_{\omega,E}(|u|^p)+\alpha p \log t,\ \ t>0,
 \end{equation}
 and 
 \[
 \int_E|\nabla u_t|^p\omega=t^p\int_E|\nabla u|^p\omega;
 \]
  therefore,  $u_t\in W^{1,p}(\omega;E)$ and inequality \eqref{sharp-log-Sobolev} holds for $u\in W^{1,p}(\omega;E)$ if and only if it holds for $u_t$. In particular, due to \eqref{J-functional}, we may choose $t>0$ such that
\begin{equation*}
\int_E|u_t(x)|^p\omega(x)\log\omega(x)dx=\int_E |u(x)|^p\omega(x)\log{\omega(x)}dx-\tau \log t=C_3-C_2.
\end{equation*}
 From now on we will simply write $u$ instead of $u_t$ and assume without loss of generality that
 \begin{equation}\label{scaling-vanish}
 \int_E|u(x)|^p\omega(x)\log\omega(x)dx=C_3-C_2.
 \end{equation}
 
 
 
Now, we shall focus to the proof of the log-Sobolev inequality \eqref{sharp-log-Sobolev}. Let us consider the probability measures $d\mu(x)=|u(x)|^p\omega(x)dx$ and $d\nu(x)=C_1e^{-|y|^{p'}}\omega(y)dy$. By the theory of optimal mass transport developed by Brenier \cite{Brenier} and further extended by McCann \cite[Main Theorem]{McCann} for general probability measures,  
there exists a convex function $\phi\colon \mathbb R^n \to \mathbb R$ such that $\nabla \phi$ pushes $\mu$ forward  to $\nu$, i.e.\ $\nabla \phi_\#\mu=\nu,$ and  $\nabla \phi(\overline E)
\subseteq \overline E$. In addition,  according to Alexandrov's classical result, $\nabla \phi$ is a.e.\ differentiable, and the push-forward relation $\nabla \phi_\#\mu=\nu$ implies  the validity of the  Monge-Amp\`ere equation
\begin{equation}\label{Monge-Ampere-log}
|u(x)|^p\omega(x)= C_1e^{-|\nabla\phi(x)|^{p'}}\omega(\nabla\phi(x)) \det(D_A^2\phi(x))\ \ {\rm a.e.}\ x\in E\cap U,
\end{equation}
where $U={\rm supp} u$ and  $D_A^2\phi$ denotes the Hessian of $\phi$ in the sense of Alexandrov (namely, the absolutely continuous part of the distributional Hessian of $\phi$), see McCann \cite[Remark 4.5]{McCann-Adv}; for further discussion, one may consult the monographs by Ambrosio, Bru\'e and Semola \cite[Lecture 5]{ABS}, Maggi \cite[Chapter 8]{Maggi} and Villani \cite[Section 3.3]{Villani}.  
 
In the sequel, we will use several times the change of variables via the  Monge-Amp\`ere equation \eqref{Monge-Ampere-log} which is  understood in the sense that $\nabla \phi_\#\mu=\nu$, equivalent to
\begin{equation}\label{change-of-variable}
	\int_E b(\nabla \phi(x))|u(x)|^p\omega(x)dx=C_1\int_{E} b(y)e^{-|y|^{p'}}\omega(y) dy
\end{equation}
for every Borel function $b:E\to \mathbb R.$

 

  By taking the logarithm of relation 
\eqref{Monge-Ampere-log}, and integrating with respect to the measure $|u(x)|^p\omega(x)dx$, we obtain that
\begin{eqnarray*}
	I&\coloneqq &\mathcal E_{\omega,E}(|u|^p)=\int_E |u(x)|^p\log(|u(x)|^p)\omega(x)dx\\&=&\log C_1-\int_E |\nabla\phi(x)|^{p'}|u(x)|^p\omega(x)dx+ \int_E |u(x)|^p\omega(x)\log{\omega(\nabla\phi(x))}dx\\&&-\int_E |u(x)|^p\omega(x)\log{\omega(x)}dx+\int_E |u(x)|^p\omega(x)\log\det(D_A^2\phi(x))dx.
\end{eqnarray*}
The change of variables formula \eqref{change-of-variable} applied to the second and third terms (for $b(y):=|y|^{p'}$ and $b(y):=\log \omega(y)$, respectively) gives
\[
I=C_3-\int_E |u(x)|^p\omega(x)\log{\omega(x)}dx+\int_E |u(x)|^p\omega(x)\log\det(D_A^2\phi(x))dx.
\]
\noindent On the other hand, by the weighted AM-GM inequality we have that
\begin{equation}\label{am-gm}
	\det(D_A^2\phi(x))^\frac{1}{n+\tau}\leq \frac{\tau}{n+\tau}+\frac{{\rm tr}D_A^2\phi(x)}{n+\tau}=\frac{\tau}{n+\tau}+\frac{\Delta_A \phi(x)}{n+\tau}\ \ {\rm for\ a.e.}\ x\in E\cap U.
\end{equation}
The latter estimate and Jensen's inequality imply that
\begin{eqnarray}\label{estimate-of-II}
	II&\coloneqq &\nonumber \int_E |u(x)|^p\omega(x)\log\det(D_A^2\phi(x))dx\\&\leq &\nonumber (n+\tau) \int_E |u(x)|^p\omega(x)\log \left(\frac{\tau}{n+\tau}+\frac{\Delta_A \phi(x)}{n+\tau}\right) dx\\&\leq &\nonumber (n+\tau) \log\left(\int_E |u(x)|^p\omega(x) \left({\tau}+{\Delta_A \phi(x)}\right) dx\right)-(n+\tau)\log(n+\tau)\\&= & (n+\tau) \log\left(\tau +\int_E |u(x)|^p\omega(x) {\Delta_A \phi(x)} dx\right)-(n+\tau)\log(n+\tau).
\end{eqnarray}
We are going to estimate the integral in the first term on the right side of the above expression. In order to do this, we first observe that by a change of variables we have that $u^{p-1}\nabla\phi\in L^{p'}(\omega;E)$. Accordingly, the integration by parts formula from Proposition \ref{prop:integrationbyparts} and H\"older's inequality imply that
\begin{eqnarray}\label{Definition-of-III}
 	III&\coloneqq &\nonumber\int_E |u(x)|^p\omega(x) {\Delta_A \phi(x)} dx\\&\leq  &\nonumber
 	-p\int_E|u(x)|^{p-1}\omega(x) {\nabla \phi(x)}\cdot\nabla |u(x)| dx-\int_E|u(x)|^{p}\nabla \omega(x) \cdot\nabla \phi(x) dx\\&\leq &\nonumber p\left(\int_E|\nabla u|^p\omega\right)^\frac{1}{p}\left(\int_E|u(x)|^p \omega(x)|\nabla \phi(x)|^{p'}\right)^\frac{1}{p'}-\int_E|u(x)|^{p}\nabla \omega(x) \cdot\nabla \phi(x) dx\\&=&C_4\left(\int_E|\nabla u|^p\omega\right)^\frac{1}{p}-\int_E|u(x)|^{p}\nabla \omega(x) \cdot\nabla \phi(x) dx,
 \end{eqnarray}
 where we used again the
change of variables formula \eqref{change-of-variable} and  the exact form of $C_4>0$, respectively.
 
  Since $\omega$ is log-concave on $E$, by the fact that $\nabla \phi(\overline E)
 \subseteq \overline E$ and Proposition \ref{prop-log-concave},   we have that
 \begin{equation}\label{log-concave}
 	\log\left(\frac{\omega(\nabla \phi(x))}{\omega(x)}\right)\leq -\tau+\frac{\nabla \omega (x)\cdot \nabla \phi(x)}{\omega (x)}\ \ {\rm for\ a.e.}\ x\in E.	
 \end{equation}
By relations \eqref{log-concave}, \eqref{Monge-Ampere-log}  and \eqref{scaling-vanish}, it follows that 
 \begin{eqnarray}\label{estimate-of-III}
 	III&\leq&\nonumber C_4\left(\int_E|\nabla u|^p\omega\right)^\frac{1}{p}-\int_E|u(x)|^{p}\nabla \omega(x) \cdot\nabla \phi(x) dx\\&\leq &C_4\left(\int_E|\nabla u|^p\omega\right)^\frac{1}{p}-\tau .
 \end{eqnarray}

Summing up relations \eqref{estimate-of-II} and \eqref{estimate-of-III} we conclude that 
\begin{eqnarray}\label{ineq-last-log}
\nonumber \int_E|u(x)|^p\log(|u(x)|^p)\omega(x)dx&\leq &C_3-\int_E |u(x)|^p\omega(x)\log{\omega(x)}dx-(n+\tau)\log(n+\tau)\\&&+(n+\tau) \log\left(C_4\left(\int_E|\nabla u|^p\omega\right)^{\frac{1}{p}} \right).
\end{eqnarray}
Taking relation \eqref{scaling-vanish} again into account, the inequality \eqref{ineq-last-log} reduces to 
\begin{eqnarray*}
	 \int_E|u(x)|^p\log(|u|^p)\omega(x)dx&\leq& C_2-(n+\tau)\log(n+\tau)+\frac{n+\tau}{p} \log\left(C_4^p\int_E|\nabla u|^p\omega\right) \\&=&
	 \frac{n+\tau}{p}	\log\left(\mathcal L_{\omega,p}\displaystyle\int_E |\nabla u|^p\omega dx\right),
\end{eqnarray*}
where by using the values of $C_2$ and $C_4$ and the fact that $\omega_{SE}=\displaystyle\int_{\mathbb S^{n-1}\cap E}\omega =(n+\tau)\int_{B\cap E}\omega$,  we obtain that
$$\mathcal L_{\omega,p}=\frac{e^\frac{pC_2}{n+\tau}}{(n+\tau)^p}C_4^p=\frac{p}{n+\tau}\left(\frac{p-1}{e}\right)^{p-1}\left(\Gamma\left(\frac{n+\tau}{p'}+1\right)\int_{B\cap E}\omega\right)^{-\frac{p}{n+\tau}},$$
which is precisely the expected constant, see \eqref{sharp-log-Sobolev}. \hfill $\square$

\begin{remark}\rm 
	The proof of \eqref{sharp-log-Sobolev} is substantially simpler for functions belonging to $C_c^\infty(\mathbb R^n)$ (restricted to $E$) than for functions in $W^{1,p}(\omega;E)$;  indeed, both Propositions \ref{proposition-key} and \ref{prop:integrationbyparts} as well as relation \eqref{J-functional} -- which are crucial in the proof -- trivially hold for functions in $C_c^\infty(\mathbb R^n)$. However, the $C_c^\infty(\mathbb R^n)$-version is not sufficient to characterize the equality cases in \eqref{sharp-log-Sobolev} as the candidates for the extremal functions do not belong to  $C_c^\infty(\mathbb R^n)$; see \S \ref{subsection-3-1-2}.
\end{remark}

\begin{remark}\rm \label{remark-anistropic}
	As we mentioned in the Introduction, it is possible to give an anisotropic version of Theorem \ref{log-Sobolev} w.r.t.\ a general norm $\|\cdot\|$. Indeed, if $\|\cdot\|_*$ is its dual, then in \eqref{sharp-log-Sobolev} the expression of the gradient norm is replaced by $\displaystyle \int_E \|\nabla u\|_*^p\omega dx$, while the ball $B$ in  the best constant $\mathcal L_{\omega,p}$ is understood w.r.t.\ the norm $\|\cdot\|$. In the same way, in the expression of the extremal function \eqref{Gaussian} the Euclidean norm is replaced by the norm  $\|\cdot\|$. The only modification of the proof is in the estimate \eqref{Definition-of-III}, where we use the inequality $X\cdot Y \leq \|X\|_* \|Y\|$ for every  $X,Y\in \mathbb R^n$, combined with the corresponding H\"older inequality; namely 
	\begin{eqnarray*}
		\int_E|u(x)|^{p-1} {\nabla u(x)}\cdot\nabla \phi(x) \omega(x)dx &\leq& \int_E|u(x)|^{p-1} \|\nabla u(x)\|_*\|\nabla \phi(x)\| \omega(x)dx\\ &\leq & \left(\int_E\|\nabla u\|_*^p\omega\right)^\frac{1}{p}\left(\int_E|u|^p \|\nabla \phi\|^{p'}\omega\right)^\frac{1}{p'}.
	\end{eqnarray*}
\end{remark}

\subsection{Equality in \eqref{sharp-log-Sobolev}: case $p>1$, $\tau>0$}\label{subsection-3-1-2}
We are going to  characterize the equality in \eqref{sharp-log-Sobolev}. 
 We assume that there exists a  function $u\in {W}^{1,p}(\omega;E)$ with equality in \eqref{sharp-log-Sobolev} and  $\displaystyle\int_E u^p\omega dx=1$; without loss of generality, we may assume that $u\geq 0$. 
Let $S$ be the interior of the effective domain of $\phi$, i.e.,   $\{x\in \mathbb R^n:\phi(x)<+\infty\}$. We observe that $U={\rm supp}u$ is contained in $\overline{E\cap S}$.  

As a first step we claim that for every compact subset $K\subset E\cap S$, there exist $c_K,C_K>0$ such that $c_K\leq u(x)\leq C_K $ for a.e. $x\in K.$ The proof of this property is inspired by \cite[Proposition 6]{CENV}. By tracking back the equality cases in the proof of \eqref{sharp-log-Sobolev}, we have -- among others -- that we should have equality in H\"older's inequality, see \eqref{Definition-of-III}; in particular, there exists $M>0$ such that 
 \[
 |\nabla u(x)|^p\omega(x)=Mu(x)^p|\nabla \phi(x)|^{p'}\omega(x)\ \ {\rm for\ a.e.}\ x\in E\cap S.
 \]
 For every $k\geq 1$, we consider the function $u_{k}(x)=\max\{\frac{1}{k},u(x)\}>0$, $x\in E\cap S$. Note  that $\nabla u_{k}\in L^p(\omega;E\cap S)$. In fact, we have for every $k\geq 1$ and a.e.\ $x\in E\cap S$ that
 \[
 |\nabla u_{k}(x)|^p\leq |\nabla u(x)|^p=Mu(x)^p|\nabla \phi(x)|^{p'}\leq  Mu_{k}(x)^p|\nabla \phi(x)|^{p'}.
 \]
  This estimate implies that for every $k\geq 1$ one has
 \[
 |\nabla\log  u_{k}(x)|^p \leq M|\nabla \phi(x)|^{p'}\ \ {\rm for\ a.e.}\ x\in E\cap S.
 \]
 Taking into account that $\phi$ is convex, $|\nabla\phi|$ is locally bounded on $E\cap S$. Thus, the functions $\log u_{k}$ are uniformly locally Lipschitz on $E\cap S$ with respect to $k;$ in particular, $\log u_{k}$ are also locally bounded on $E\cap S,$ i.e., there are $c_K^0,C_K^0\in \mathbb R$ such that 
 \[
 c_K^0\leq \log u_{k}(x)\leq C_K^0,\ \ \forall k\geq 1\ {\rm and\ for\ a.e.}\ x\in K.
 \]
  Letting $k\to \infty$, it turns out that 
\begin{equation}\label{two-sided estimate}
 c_K\leq u(x)\leq C_K\ {\rm  for\ a.e.}\ x\in K,
\end{equation}
where $c_K=e^{c_K^0}>0$ and $C_K=e^{C_K^0}>0$ which concludes the claim. In particular, we have that $U={\rm supp}u=\overline{E\cap S}$. 
 
 Similarly to  Cordero-Erausquin,  Nazaret and Villani \cite{CENV} (see also Nguyen \cite{Nguyen}), we prove that $\Delta_s \phi$ vanishes, where  $\Delta_s \phi$ stands for the singular part of the distributional Laplacian $\Delta_{\mathcal D'} \phi$. Note that $\Delta_s \phi$ is a non-negative measure and  $\Delta_{\mathcal D'} \phi=\Delta_A \phi+\Delta_s \phi$. Since we should have equality in \eqref{eq:GaussGreen}, repeating the approximation argument of Proposition \ref{prop:integrationbyparts} for the function $u$, we necessarily have that
 \begin{equation}\label{eq:limlim}
 \lim_{k\to \infty}\lim_{\varepsilon\to 0}\liminf_{\delta\to 0}\langle (u_{k,\varepsilon}^\delta)^p\omega, \Delta_s \phi\rangle_{\mathcal D'}=0.
 \end{equation}


 Fix a convex and compact set $K\subset E\cap S$ containing $\tilde x$ in its interior, and define $d_K\coloneqq  d(K, (E\cap S)^c)>0$. The set $K'\coloneqq \{x\in E\cap S: d(x,K)\leq d_K/2 \}$ is a convex compact subset of $E\cap S$ which contains $K$. Let $k\geq \frac{1}{d(K',\partial (E\cap S))}$  and  $\varepsilon>0$ be small enough such that $\frac K{1-\varepsilon}-\frac{\varepsilon}{1-\varepsilon}\tilde x\subseteq K'$ and $\varepsilon<\frac{1}{\max_{x\in K'}|x|}$. By using \eqref{two-sided estimate} and the fact that $\theta_k(x) =1$ for  $d(x,\partial E)\geq \frac{1}{k}$, it turns out that  $u_{k,\varepsilon}\geq c_{K'}$ a.e. on $K'$. In particular, if $0<\delta<d_K/2$, then by the definition of $K'$ we have $K+\delta B(0,1)\subset K'$ and consequently, a change of variables implies that for every $x\in K$, we have  
\begin{eqnarray*}
		u_{k,\varepsilon}^\delta(x)&=&\int_E u_{k,\varepsilon}(y)\kappa_\delta(x-y) dy=\frac{1}{\delta^n}\int_{|x-y|<\delta} u_{k,\varepsilon}(y)\kappa\left(\frac{x-y}{\delta}\right) dy\\&=&\int_{B(0,1)} u_{k,\varepsilon}(x-\delta z)\kappa\left(z\right) dz\\&\geq &c_{K'}\int_{B(0,1)} \kappa\left(z\right) dz\\&=&c_{K'}.
	\end{eqnarray*}
	Therefore, for such values of $\delta$ and $\varepsilon$ we have
	\[
	\langle (u_{k,\varepsilon}^\delta)^p\omega, \Delta_s \phi\rangle_{\mathcal D'}\geq c_{K'}^p\omega_{K}\Delta_s\phi[K]\geq0,
	\]
		where $\omega_{K}=\min_{K}\omega>0$.
	The latter estimate together with \eqref{eq:limlim} imply that $\Delta_s\phi[K]=0$. By the arbitrariness of $K$, $\Delta_s\phi$ should vanish. In particular, $\nabla \phi\in W^{1,1}(E)$.



The equality in \eqref{am-gm} implies that $D^2_A\phi(x)=I_n$ for a.e.\ $x\in E$. Since $\Delta_s\phi=0$ we can apply Figalli,  Maggi and  Pratelli \cite[Lemma A.2]{FMP} and find $x_0\in \overline E$ such that
\begin{equation}\label{nabla-phi=id}
	\nabla \phi(x)=x+x_0\ \ {\rm for\ a.e.}\ x\in E.
\end{equation}
In particular, $\overline E+x_0\subseteq \overline E.$
Moreover, the equality in \eqref{log-concave} with \eqref{nabla-phi=id} and the smoothness of $\omega$  implies that
\begin{equation}\label{log-konk-equal-2}
	\log\left(\frac{\omega(x+x_0)}{\omega(x)}\right)=\frac{\nabla \omega (x)\cdot x_0}{\omega (x)},\   x\in E.
\end{equation}
If we assume by contradiction that $x_0\in E$, then the latter relation with $x\coloneqq x_0$ gives that $\tau \log 2=\tau$, a contradiction to $\tau>0.$ Thus $x_0\in \partial E$. 

Moreover, if we consider for every $x\in E$ the function $r_x:E\to \mathbb R$ defined by
$$r_x(y)=\log\left(\frac{\omega(y)}{\omega(x)}\right)+\tau-\frac{\nabla \omega (x)\cdot y}{\omega (x)},\ y\in E,$$
then the log-convavity of $\omega$ on $E$ is equivalent to $r_x(y)\leq 0$ for every $y\in E$. Moreover,  \eqref{log-konk-equal-2} implies that $r_x(x+x_0)=0$, thus $y\coloneqq x+x_0\in E$ is a maximum point of $r_x$; in particular, $\nabla r_x|_{y=x+x_0}=0$, i.e., 
$$\frac{\nabla \omega(x+x_0)}{\omega(x+x_0)}-\frac{\nabla \omega (x)}{\omega (x)}=0,\ x\in E.$$
Therefore, there exists $c>0$ such that $\omega(x+x_0)=c\omega(x)$ for every $x\in E$. 

We are going to prove that $c=1$. First, by \eqref{log-konk-equal-2} we have that $\omega(x)\log c=\nabla \omega (x)\cdot x_0$ for every $x\in E.$ Differentiating $\omega(x+x_0)=c\omega(x)$, the homogeneity of $\omega$ implies that for every $x\in E$ we have
\begin{eqnarray}\label{constant-c}
\nonumber	c\tau\omega(x)&=&\tau\omega(x+x_0)=\nabla \omega(x+x_0)\cdot (x+x_0)=c\nabla \omega(x)\cdot (x+x_0)\\&=&c\tau\omega(x)+c\omega(x)\log c,
\end{eqnarray}
i.e., we necessarily have that $c=1.$
%
Thus, $\omega(x+x_0)=\omega(x)$ for every $x\in E$. In particular, by the Monge-Amp\`ere equation 
\eqref{Monge-Ampere-log} and \eqref{nabla-phi=id} 
one has that 
\[
u(x)^p= C_1e^{-|x+x_0|^{p'}} ,\ \ \ x\in E,
\]
i.e., the extremal function $u$ in \eqref{sharp-log-Sobolev}  is necessarily a Gaussian of the type \eqref{Gaussian} with the usual normalization property. 

We now prove that $x_0\in (-\partial E)\cap (\partial E)$. Notice that since $\nabla\phi(x)=x+x_0$ for a.e.\ $x\in E$, we can assume without loss of generality that $\nabla\phi(x)=x+x_0$ for every $x\in E$. Since $\nabla\phi$ is bijective (see \cite[Theorem~2.12(iv)]{Villani}), we have $E+x_0=E$. In particular, $x_0\in (-\overline E)\cap\overline E$. 

Conversely, if $u=	u_{\lambda,x_0}$ from \eqref{Gaussian}, by a change of variables and using the fact that $\omega(x+x_0)=\omega(x)$ for every $x\in E$, a simple computation shows that equality holds in \eqref{sharp-log-Sobolev}.  \hfill $\square$

\subsection{Proof of the inequality \eqref{sharp-log-Sobolev}: case $p>1$, $\tau=0$} Since $\tau=0$, Proposition \ref{prop-log-concave} implies that $\omega$ is  constant in $E$, i.e. $\omega(x)= \omega_0>0$ for every $x\in E$; moreover, without loss of generality, a simple computation shows that -- by choosing $v\coloneqq  u\omega_0^{1/p}$ in the original inequality \eqref{sharp-log-Sobolev} --  we may consider $\omega_0=1$.  Accordingly, the proof performed in \S\ref{subsection-3-1-1} 
becomes even simpler, e.g. the validity of \eqref{J-functional} is trivial. Furthermore, it turns out that
$C_2=C_3$ (as $\omega_0=1$)
which simplifies the scaling argument, see \eqref{scaling-vanish}, obtaining directly the inequality \eqref{sharp-log-Sobolev}. \hfill $\square$

\subsection{Equality in \eqref{sharp-log-Sobolev}: case $p>1$, $\tau=0$}  Analogously to \S \ref{subsection-3-1-2}, the equality in \eqref{sharp-log-Sobolev} implies that $\Delta_s\phi$  vanishes, thus $\nabla \phi\in W^{1,1}(E)$. 
The equality in \eqref{am-gm} (with $\tau=0$) implies that for some $c_0>0$ we have  $D^2\phi(x)=c_0I_n$ for a.e.\ $x\in E$. In particular, there exists  $x_0\in \overline E$ such that
\begin{equation}\label{nabla-phi=id-0}
	\nabla \phi(x)=c_0x+x_0\ \ {\rm for\ a.e.}\ x\in E.
\end{equation}
Thus, by the Monge-Amp\`ere equation 
\eqref{Monge-Ampere-log} and \eqref{nabla-phi=id-0}, since the weight is constant,  
we have that 
\[
u(x)^p= C_1c_0^ne^{-|c_0x+x_0|^{p'}} ,\ \ \ x\in E,
\]
i.e., the only class of extremal functions (up to scaling) is provided by the Gaussians \eqref{Gaussian}. 
Arguing as in the case $\tau>0$, we can assume without loss of generality that $\nabla\phi(x)=c_0x+x_0$ for every $x\in E$. Since $\nabla\phi$ is bijective, $c_0E+x_0=E$. Since $0\in\overline E$, we have that $-{x_0/{c_0}}\in \overline E$, whence $x_0 \in (-\overline E)\cap \overline E$, as required. \hfill $\square$

\section{Proof of Theorem \ref{log-Sobolev-1}:  the case $p=1$} \label{section-4}

The proof of Theorem \ref{log-Sobolev-1} requires the counterpart of Proposition \ref{prop:integrationbyparts} in the case $p=1$, i.e., an integration by parts formula/inequality for functions belonging to $BV(\omega;E)$. 

\begin{proposition}\label{prop:p=1}
	Let $E\subseteq\mathbb R^n$ be an open convex cone, $u\in BV(\omega;E)$ be non-negative and let $\phi\colon \mathbb R^n\to \mathbb R$ be a convex function such that $|\nabla\phi(x)|\leq 1$ and $\nabla\phi(x)\in E$ for a.e. $x\in E$.
	Then we have
	\begin{equation}\label{eq:Bypartsp=1}
		\int_E u\omega \Delta_A\phi dx\leq  \|Du\|_{\omega}(E) -\int_E u \nabla \omega \cdot \nabla \phi dx.
	\end{equation}
\end{proposition}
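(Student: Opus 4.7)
My approach mirrors the proof of Proposition~\ref{prop:integrationbyparts}, replacing H\"older's inequality by the pointwise bound $|\nabla\phi|\leq 1$ and the $W^{1,p}$-approximation by a $BV(\omega;E)$-approximation.

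First I would verify \eqref{eq:Bypartsp=1} for nonnegative $v\in C_c^\infty(E)$. Since $\omega\in\mathcal C^1(E)$, the function $v\omega$ belongs to $C_c^\infty(E)$, and the integration-by-parts formula for the distributional Laplacian of the convex function $\phi$ (a positive Radon measure, with $\nabla\phi\in L^\infty_{\mathrm{loc}}$) gives
\[
\int_E v\omega \, d(\Delta_{\mathcal D'}\phi) = -\int_E \omega\,\nabla v\cdot\nabla\phi \, dx - \int_E v\,\nabla\omega\cdot\nabla\phi \, dx.
\]
Since $\Delta_A\phi\leq\Delta_{\mathcal D'}\phi$, $|\nabla\phi|\leq 1$, and $\|Dv\|_\omega(E)=\int_E|\nabla v|\omega\, dx$ for smooth $v$, the inequality \eqref{eq:Bypartsp=1} follows in this case.

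For general nonnegative $u\in BV(\omega;E)$, I would build $u_{k,\varepsilon,\delta}\in C_c^\infty(E)$ by the three-parameter scheme of Proposition~\ref{prop:integrationbyparts}: cut off near $\partial E$ using $\theta_k(x)=\theta_k^0(d(x,\partial E))$, apply the $\varepsilon$-compression/cutoff yielding $u_{k,\varepsilon}$ with compact support in $E$, and mollify with $\kappa_\delta$. Apply the smooth case to $u_{k,\varepsilon,\delta}$ and pass to the limit in the order $\delta\to 0$, $\varepsilon\to 0$, $k\to\infty$. Since $\Delta_A\phi\geq 0$ by convexity of $\phi$ and $\nabla\omega\cdot\nabla\phi\geq 0$ a.e.\ on $E$ by Proposition~\ref{prop-log-concave} (using $\nabla\phi\in E$ a.e.), monotone and dominated convergence handle the integrals $\int_E u_{k,\varepsilon,\delta}\omega\Delta_A\phi\, dx$ and $\int_E u_{k,\varepsilon,\delta}\nabla\omega\cdot\nabla\phi\, dx$. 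The weighted $BV$-mollification properties from \cite[Theorem~5.1]{BBF} yield $\limsup_{\delta\to 0}\int_E|\nabla u_{k,\varepsilon,\delta}|\omega\, dx\leq\|Du_{k,\varepsilon}\|_\omega(E)$, and a similar estimate controls the $\varepsilon$-step as in the $p>1$ case.

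The main obstacle is the passage $k\to\infty$ for the $BV$-term. The naive bound $\|D(u\theta_k)\|_\omega(E)\leq \|\theta_k Du\|_\omega(E)+\int_E u|\nabla\theta_k|\omega\, dx$ leaves a spurious boundary contribution of order $k\int_{\{d(\cdot,\partial E)\sim 1/k\}}u\omega\, dx$, which need not vanish a priori because a $BV(\omega;E)$ function may have nontrivial trace on $\partial E$. To absorb this term I would invoke the global weighted approximation theorem of \cite[Theorem~5.1]{BBF} already used in Proposition~\ref{proposition-key}, producing $u_j\in C^\infty(E)\cap BV(\omega;E)$ with $u_j\to u$ in $L^1(\omega;E)$ and $\int_E|\nabla u_j|\omega\, dx\to\|Du\|_\omega(E)$; applying the preceding cutoff-mollification scheme to each $u_j$ and extracting a diagonal subsequence then controls the boundary error and produces \eqref{eq:Bypartsp=1} in full generality.
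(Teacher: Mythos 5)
There is a genuine gap, and it sits exactly at the point you flag as "the main obstacle." Your proposed remedy --- approximate $u$ by $u_j\in C^\infty(E)\cap BV(\omega;E)$ in the strict sense of \cite[Theorem 5.1]{BBF} and then run the cutoff--mollification scheme on each $u_j$ with a diagonal extraction --- does not absorb the spurious boundary term. The smooth approximants $u_j$ inherit the (possibly nontrivial) trace of $u$ on $\partial E$: strict convergence forces $u_j\to u$ near the boundary as well, so for each \emph{fixed} $j$ the quantity $k\int_{\{d(\cdot,\partial E)\sim 1/k\}}u_j\,\omega\,dx$ stays bounded away from $0$ as $k\to\infty$ whenever the trace of $u$ is not a.e.\ zero. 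Hence the $k\to\infty$ limit for fixed $j$ already yields \eqref{eq:Bypartsp=1} with an extra nonnegative error $T_j$ on the right-hand side, and $\limsup_j T_j$ need not vanish; no choice of diagonal subsequence changes this. Relatedly, your base case $v\in C_c^\infty(E)$ is too weak to combine with the \cite{BBF} approximants, since those are not compactly supported in $E$, and no compactly supported sequence can converge to $u$ strictly in $BV(\omega;E)$ when $u$ has nonzero trace.

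The missing idea is that the boundary contribution should not be killed but kept and given a sign. For a smooth function that does \emph{not} vanish on $\partial E$, the divergence theorem applied to $u\omega\nabla\phi$ produces the term $\int_{\partial E}u\,\omega\,\widetilde{\nabla\phi}\cdot{\bf n}\,d\mathcal H^{n-1}$, and this is $\leq 0$ because $\nabla\phi(x)\in E$ a.e.\ forces the trace $\widetilde{\nabla\phi}$ to lie in $\overline E$ while ${\bf n}$ is the outer normal of the convex cone $E$. This is precisely how the paper argues: it proves \eqref{eq:Bypartsp=1} for smooth functions with no cutoff near $\partial E$ (discarding the non-positive boundary term and using $|\nabla\phi|\leq 1$ for the bulk term), and then passes to general $u\in BV(\omega;E)$ solely via the strict approximation $u_\varepsilon\in C^\infty(E)\cap BV(\omega;E)$ with $\|Du_\varepsilon\|_\omega(E)\to\|Du\|_\omega(E)$, whose explicit partition-of-unity construction it carries out in detail (it is needed again later for the equality analysis). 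Alternatively, you could salvage your architecture by \emph{not} estimating $\|D(u\theta_k)\|_\omega$ via $|\nabla\phi|\leq1$ wholesale, but instead keeping the cutoff contribution $-\int_E u\,(\nabla\theta_k\cdot\nabla\phi)\,\omega\,dx$ and observing it is $\leq 0$ since $\nabla\theta_k$ is a nonnegative multiple of $-{\bf n}(x^*)$ and $\nabla\phi\in\overline E$, exactly as in the proof of Proposition \ref{prop:integrationbyparts}; but as written your proposal does neither.
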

\begin{proof}
	If $u\in L^1(\omega;E)\cap C^\infty_c(\mathbb R^n)$, then the divergence theorem yields
	\begin{eqnarray}\label{eq:intbypartssmooth}
		\int_E u\omega \Delta_A\phi dx &\leq& \int_E u\omega \Delta_{\mathcal D'}\phi dx \\&=& -\int_E \nabla\phi\cdot \nabla u \omega dx -\int_E u\nabla\omega\cdot \nabla\phi dx +\int_{\partial E} u\omega \widetilde{\nabla\phi}\cdot{\bf n} d\mathcal H^{n-1},
	\end{eqnarray}
	where $\widetilde{\nabla \phi}$ denotes the trace of $\nabla\phi$ on $\partial E$, which is well-defined up to $\mathcal H^{n-1}$-null sets. One can verify that since $\nabla\phi(x)\in E$ for a.e.\  $x\in \overline E$, then $\widetilde{\nabla\phi}\cdot {\bf n}\leq 0$ holds $\mathcal H^{n-1}$-a.e. on $\partial E$, whence
	\begin{equation}\label{eq:negativeboundary}
		\int_{\partial E} u\omega \widetilde{\nabla\phi}\cdot {\bf n} d\mathcal H^{n-1}\leq 0.
	\end{equation}
	On the other hand, since $|\nabla \phi|\leq 1$ a.e.\ on $E$, one has
	\begin{equation}\label{eq:totvariation}
		-\int_E\nabla \phi \cdot \nabla u\omega dx\leq \|Du\|_{\omega}(E).
	\end{equation}
	Combining \eqref{eq:negativeboundary}, \eqref{eq:totvariation} and \eqref{eq:intbypartssmooth} we obtain  \eqref{eq:Bypartsp=1} for $u\in L^1(\omega;E) \cap  C_c^\infty(\mathbb R^n)$.
	
	Assume now $u\in BV(\omega;E)$. If we have a family of $u_\varepsilon \in C^\infty(E)\cap BV(\omega;E)$ such that
	\begin{equation}\label{approximation-0}
	\begin{aligned}
		&\lim_{\varepsilon\to0}\int_E |u_\varepsilon-u|\omega dx=0,\\
		&\lim_{\varepsilon\to 0} \|Du_\varepsilon\|_{\omega}(E)=\|Du\|_\omega(E),
	\end{aligned}
\end{equation}
	the proof is concluded. Indeed,  by the previous part we have that	\begin{equation}\label{eq:u_h}
		\int_E u_\varepsilon\omega \Delta_{A}\phi dx+\int_E u_\varepsilon\nabla\omega\cdot \nabla\phi dx\leq \|Du_\varepsilon\|_\omega(E), 
	\end{equation}
	and taking the limit of \eqref {eq:u_h} as $\varepsilon\to 0$, the approximations from \eqref{approximation-0} together with Fatou's lemma imply the required inequality \eqref{eq:Bypartsp=1}.
	
	In the sequel we shall prove \eqref{approximation-0} on a generic open set $\Omega\subseteq E$, i.e., we provide  	
	the existence of $u_\varepsilon \in C^\infty(\Omega)\cap BV(\omega;\Omega)$, $\varepsilon\in (0,1)$, such that the approximation properties 
	\begin{equation}\label{approximation}
		\begin{aligned}
			&\lim_{\varepsilon\to0}\int_\Omega |u_\varepsilon-u|\omega dx=0,\\
			&\lim_{\varepsilon\to 0} \|Du_\varepsilon\|_{\omega}(\Omega)=\|Du\|_\omega(\Omega)
		\end{aligned}
	\end{equation}
	 hold. In fact,  this property is  well-known  by Bellettini,  Bouchitt\'e and Fragal\`a \cite[Theorem 5.1]{BBF}  for general finite Radon measures on $\mathbb R^n$ and one can extend by a standard diagonal argument to $\sigma$-finite measures.
		We however need to have an explicit expression for the approximating sequence, hence we give a direct proof of the result by adapting essentially the argument of Evans and  Gariepy  \cite[Theorem 5.3]{EG}.


%
To do this, fix $\varepsilon>0$. For any $m,k\in \mathbb N$, we define
	\[
	U_k=\left\{x\in \Omega: d(x,\partial \Omega)>\frac1{m+k}\right\}\cap B(0,m+k),
	\]
	and fix $m\in \mathbb N$ so large that $\|Du\|_\omega(\Omega\setminus U_1)<\varepsilon$. Observe that $U_k$ has compact closure and $d(\overline U_k,\partial \Omega)>0$. By setting $U_0\coloneqq  \emptyset$, we introduce the open sets $V_k$ by letting
	\[
	V_k\coloneqq  U_{k+1}\setminus\overline U_{k-1}, \quad k\geq 1
	.\]
	Clearly, $\bigcup_{k\geq 1}V_k=\Omega$. According to the partition of unity we can find a family $\zeta_k\in C_c^\infty(V_k)$ with $0\leq\zeta_k\leq 1$ and
	$
	\sum_{k=1}^\infty\zeta_k\equiv 1.
	$
	By the construction of $V_k$'s, the sum above evaluated at any point has only at most three non-zero elements.
	Consider a symmetric mollifier $\eta\in C_c^\infty(B(0,1))$ such that $\int\eta dx=1$ and define $\eta_\delta(x)\coloneqq  \delta^{-n}\eta( x/\delta)$. Denote the convolution between the kernel $\eta_\delta$ and  a function $f\in L^1_{\rm loc}(\Omega)$ with
	\[
	\eta_\delta\star f(x)=\int \eta_\delta (x-y)f(y)dy.
	\]
	It is classical that  if $f\in L^p(\Omega)$, then $\eta_\delta\star f\in L^p(\Omega)$ and 
	\begin{equation}\label{eq:Lpconv}
		\lim_{\delta\to 0} \int_\Omega |(\eta_\delta\star f)-f|^pdx=0.
	\end{equation}
	Moreover $\eta_\delta\star f\in C^\infty$ and if $f\in \mathcal C^1$, one has
	\begin{equation}\label{eq:convderivative}
		\partial_j(\eta_\delta\star f)=\eta_\delta\star (\partial_jf).
	\end{equation}
	For any $k\in \mathbb N$, the function $u\zeta_k$ is supported in $V_k$, whose closure is compact, hence $u\zeta_k\in L^1(\Omega)$. This follows by the estimate
	\[
	\int_\Omega |u|\zeta_k dx=\int_{V_k}|u|\zeta_kdx \leq \frac{1}{\min_{V_k}\omega}\int_{V_k}|u|\omega dx<+\infty.
	\]
	Analogously, $u\nabla\zeta_k$ is compactly supported in $V_k$ and $u\nabla\zeta_k\in L^1(\Omega)$. Fix $X\in C_c^1(\Omega;\mathbb R^n)$ such that $|X|\leq 1$. By \eqref{eq:Lpconv} and the fact that $V_k$ is compact and $\omega$ of class $\mathcal C^1$, we can choose $\varepsilon_k>0$ such that the following conditions hold:
	\begin{equation}\label{eq:3conditions}
		\begin{cases}
			{\rm supp}(\eta_{\varepsilon_k}\star (u\zeta_k))\subset V_k\\
			\displaystyle\int_\Omega |(\eta_{\varepsilon_k}\star (u\zeta_k)-u\zeta_k|\omega dx< \frac{\varepsilon}{2^k}\\
			\displaystyle\int_\Omega |(\eta_{\varepsilon_k}\star (u\nabla\zeta_k)-u\nabla\zeta_k|\omega dx< \frac{\varepsilon}{2^k}.
		\end{cases}
	\end{equation}
	Since $\omega X$ is uniformly continuous on $V_k$, we can moreover assume that $\varepsilon_k$ is small enough to ensure
	\begin{equation}\label{eq:stimacampo}
		|\zeta_k(\eta_{\varepsilon_k}\star \omega X)|(x)\leq\omega(x)|X|(x)+\varepsilon\min_{V_k}\omega\leq \omega(x)(1+\varepsilon)  \quad {\rm for\ all}\ x\in \Omega.
	\end{equation}
	We then define
	\[
	u_\varepsilon\coloneqq  \sum_{k=1}^\infty \eta_{\varepsilon_k}\star (u\zeta_k).
	\]
	Then $u_\varepsilon\in C^{\infty}(\Omega)$ and by \eqref{eq:3conditions} and the fact that $u=\sum_{k=1}^\infty u\zeta_k$ it holds that
	\[
	\lim_{\varepsilon\to0}\int_\Omega |u_\varepsilon-u|\omega dx=0.
	\] 
	By the lower semicontinuity of the total variation with respect to $L^1(\omega;\Omega)$-convergence we also have
	\[
	\|Du\|_\omega(\Omega)\leq \liminf_{\varepsilon\to 0}\|Du_\varepsilon\|_\omega(\Omega).
	\]
	To prove the opposite inequality we proceed as follows:
	\begin{equation}\label{eq:stimalunga}
		\begin{aligned}
			\int_\Omega u_\varepsilon {\rm div}(\omega X) dx&=\sum_{k=1}^\infty\int_\Omega \eta_{\varepsilon_k}\star (u\zeta_k){\rm div}(\omega X) dx=\sum_{k=1}^\infty\int_\Omega  (u\zeta_k)(\eta_{\varepsilon_k}\star{\rm div}(\omega X))dx\\
			&=\sum_{k=1}^\infty\int_\Omega  (u\zeta_k){\rm div}(\eta_{\varepsilon_k}\star\omega X)dx\\
			&=\sum_{k=1}^\infty\int_\Omega  u{\rm div}(\zeta_k(\eta_{\varepsilon_k}\star\omega X))dx-\sum_{k=1}^\infty\int_\Omega u\nabla\zeta_k \cdot (\eta_{\varepsilon_k}\star \omega X) dx\\
			&=\sum_{k=1}^\infty\int_\Omega  u{\rm div}(\zeta_k(\eta_{\varepsilon_k}\star\omega X))dx-\sum_{k=1}^\infty\int_\Omega X\cdot(\eta_{\varepsilon_k}\star u\nabla\zeta_k) \omega dx.
		\end{aligned}
	\end{equation}
	Here, we have used \eqref{eq:convderivative} and the fact that, if $f\in L^1_{\rm loc}(\Omega)$  and $g \in L^1_{\rm loc}(\Omega)$ with $({\rm supp}(g))_\delta\subset \Omega$, then
	\[
	\int_\Omega (\eta_\delta\star f)gdx =\int_\Omega f(\eta_\delta\star g)dx.
	\]
	Taking into account that $\sum_{k=1}^\infty \nabla\zeta_k\equiv0$, the second sum in \eqref{eq:stimalunga} is
	\[
	I_2^\varepsilon\coloneqq -\sum_{k=1}^\infty\int_\Omega X\cdot(\eta_{\varepsilon_k}\star u\nabla\zeta_k) \omega dx=-\sum_{k=1}^\infty\int_\Omega X\cdot[(\eta_{\varepsilon_k}\star u\nabla\zeta_k) -u\nabla\zeta_k]\omega dx.
	\]
	By \eqref{eq:3conditions}, one has $|I_2^\varepsilon|< \varepsilon$. The first sum of \eqref{eq:stimalunga} can  be written as
	\[
	I_1^\varepsilon\coloneqq \sum_{k=1}^\infty\int_\Omega  u{\rm div}(\zeta_k(\eta_{\varepsilon_k}\star\omega X))dx=\int_\Omega u {\rm div}(\zeta_1(\eta_{\varepsilon_1}\star \omega X))dx+\sum_{k=2}^\infty\int_\Omega  u{\rm div}(\zeta_k(\eta_{\varepsilon_k}\star\omega X))dx.
	\]
	Taking into account \eqref{eq:stimacampo} and that $\zeta_k(\eta_{\varepsilon_k}\star \omega X)$ is compactly supported on $V_k$ we can write
	\[
	\begin{aligned}
		|I_1^\varepsilon|&\leq (1+\varepsilon)\|Du\|_\omega(\Omega)+ (1+\varepsilon)\sum_{k=2}^\infty \|Du\|_\omega(V_k)\\&\leq (1+\varepsilon)\|Du\|_\omega(\Omega)+ 3 (1+\varepsilon)\|Du\|_\omega(\Omega\setminus U_1)\\&\leq (1+\varepsilon)\|Du\|_\omega(\Omega)+ 3\varepsilon(1+\varepsilon).
	\end{aligned}
	\]
	Therefore, we  proved that
	\[
	\int_\Omega u_\varepsilon {\rm div}(\omega X)dx \leq (1+\varepsilon)\|Du\|_\omega(\Omega)+3\varepsilon(1+\varepsilon)+\varepsilon.
	\]
	The proof is concluded by the arbitrariness of $X\in C^1_c(\Omega;\mathbb R^n)$ with $|X|\leq1$, by taking the limit   $\varepsilon\to 0$. 
\end{proof}

\begin{remark}\label{rem:uniformzeta}\rm
	In the proof of Proposition \ref{prop:p=1}  is not restrictive to choose $\varepsilon_k$ also such that
	\[
	|(\eta_{\varepsilon_k}\star \zeta_k)(x)-\zeta_k(x)|<\frac{\varepsilon}{2^k} \quad {\rm for\ all}\  x\in \Omega.
	\]
	This is a consequence of the fact that, if $f$ is uniformly continuous, then the convolution $\eta_\delta \star f$ uniformly converges to $f$  as $\delta\to 0$.
\end{remark}

\subsection{Proof of the inequality \eqref{sharp-log-Sobolev-1}: case $p=1$, $\tau>0$}\label{subsection-4-1}
	The proof is similar to the one presented in \S\ref{subsection-3-1-1}; we only focus on the differences. 
Let $u\in BV(\omega;E)$ be such that $\displaystyle\int_E |u|\omega dx=1$. 

 If $u_t(x)=t^\alpha u(\lambda x)$ for $t>0$, $x\in E,$ the homogeneity of $\omega$ implies that $\displaystyle\int_E |u_t|\omega dx=1$ for every $t>0$ whenever $\alpha =n+\tau$. 
 Moreover, since 
 \[\mathcal E_{\omega,E}(|u_t|)=\mathcal E_{\omega,E}(|u|)+\alpha  \log t\ \ {\rm and}\ \  \|D(|u_t|)\|_{\omega}(E)=t\|D(|u|)\|_{\omega}(E), \quad t>0,
 \]
 one has that $u_t\in BV(\omega;E)$ and inequality \eqref{sharp-log-Sobolev-1} holds for some $u\in BV(\omega;E)$ if and only if it holds for $u_t$. Therefore may choose $t>0$ such that
 \[
  \int_E |u_t(x)|\omega(x)\log{\omega(x)}dx= \int_E |u(x)|\omega(x)\log{\omega(x)}dx-\tau \log t=\tilde C_1 \int_{B\cap E} \omega(x)\log \omega(x)dx,
  \]
  where $\tilde C_1\coloneqq \left(\displaystyle\int_{B\cap E}\omega\right)^{-1}$.
Without loss of generality, we will write $u$ in place of $u_t$ and assume
\begin{equation}\label{eq:scaling}
\int_E |u(x)|\omega(x)\log{\omega(x)}dx=\tilde C_1 \int_{B\cap E} \omega(x)\log \omega(x)dx.
\end{equation}
 

Let us consider the probability measures $|u(x)|\omega(x)dx$ and $\tilde C_1\mathbbm{1}_{B\cap E}(y)\omega(y)dy$. By the theory of optimal mass transport,   
we find a convex function $\phi\colon \mathbb R^n \to \mathbb R$ such that $\nabla \phi(\overline E)\subseteq \overline E$ and the Monge-Amp\`ere equation holds, i.e., 
\begin{equation}\label{Monge-Ampere-log-1}
	|u(x)|\omega(x)= \tilde C_1\mathbbm{1}_{B\cap E}(\nabla\phi(x))\omega(\nabla\phi(x)) \det(D_A^2\phi(x))\ \ {\rm for\ a.e.}\ x\in E\cap U,
\end{equation}
where $U$ is the support of $u$. In particular, by \eqref{Monge-Ampere-log-1} one has that 
$$|\nabla\phi(x)|\leq 1\ \ {\rm for\ a.e.}\ x\in E\cap U.$$
The latter bound for $\nabla \phi$ and the integration by parts formula from Proposition \ref{prop:p=1}  imply that
\begin{eqnarray}\label{Definition-of-I-1}
	\tilde I&\coloneqq &\nonumber\int_E |u(x)|\omega(x) {\Delta_A \phi(x)}dx\\&\leq&\|D(|u|)\|_{\omega}(E)-\int_E|u(x)|\nabla \omega(x) \cdot\nabla \phi(x) dx. 
\end{eqnarray}
By the log-concavity of $\omega$, see \eqref{log-concave}, similarly to \eqref{estimate-of-III}, the latter inequality implies  that
\begin{eqnarray}\label{estimate-of-I-1}
	\tilde I&\leq&\nonumber \|D(|u|)\|_{\omega}(E)-\int_E|u(x)|\nabla \omega(x) \cdot\nabla \phi(x) dx\\&\leq &\|D(|u|)\|_{\omega}(E)-\tau +\int_E |u(x)|\omega(x)\log{\omega(x)}dx-\tilde C_1 \int_{B\cap E} \omega(x)\log \omega(x)dx .
\end{eqnarray}


A similar argument as above, combined with the  equation \eqref{Monge-Ampere-log-1} shows that 
\begin{eqnarray*}
\widetilde	{II}&\coloneqq &\mathcal E_{\omega,E}(|u|)\\&=&\int_E |u(x)|\log(|u(x)|)\omega(x)dx\\&=&\log \tilde C_1-\int_E |u(x)|\omega(x)\log \omega(x)dx+ \tilde C_1 \displaystyle\int_{B\cap E} \omega(x)\log \omega(x)dx\\&&+\int_E |u(x)|\omega(x)\log\det(D_A^2\phi(x))dx.
\end{eqnarray*}
This relation together with the AM-GM inequality \eqref{am-gm}, the Jensen inequality and estimate \eqref{estimate-of-I-1} imply that 
\begin{eqnarray*}
	\widetilde	{II}&\leq &\log \tilde C_1-\int_E |u(x)|\omega(x)\log \omega(x)dx+ \tilde C_1 \displaystyle\int_{B\cap E} \omega(x)\log \omega(x)dx-(n+\tau)\log(n+\tau)\\&&+(n+\tau)\log\left(\|D(|u|)\|_{\omega}(E) +\int_E |u(x)|\omega(x)\log{\omega(x)}dx-\tilde C_1 \int_{B\cap E} \omega(x)\log \omega(x)dx \right).
\end{eqnarray*}
By \eqref{eq:scaling}, the latter estimate reduces to 
\[
\mathcal E_{\omega,E}(|u|)\leq \log \tilde C_1-(n+\tau)\log(n+\tau)+(n+\tau)\log\left(\|D(|u|)\|_{\omega}(E)  \right).
\]
We observe that this inequality is equivalent to 
\[\mathcal E_{\omega,E}(|u|)\leq (n+\tau)\log\left(\frac{\tilde C_1^{\frac{1}{n+\tau}}}{n+\tau}\|D(|u|)\|_{\omega}(E)  \right),\]
which is  exactly the required relation \eqref{sharp-log-Sobolev-1}. \hfill $\square$

\subsection{Equality in \eqref{sharp-log-Sobolev-1}: case $p=1$, $\tau>0$}\label{subsection-4-2} By the Monge-Amp\`ere equation \eqref{Monge-Ampere-log-1} and $\nabla \phi(\overline E)\subseteq \overline E$  we obtain that, up to a null-measure set, $\Omega\coloneqq  {\rm int}(E\cap U)$ coincides with the set $\{x\in E:|\nabla \phi(x)|< 1\}.$ In particular, $u(x)=0$ for a.e.\ $x\in E\setminus \Omega $.  On the other hand, we are going to prove that
\begin{equation}\label{u-estimate}
	|u(x)|\geq \tilde C_1 e^{-\tau}\ \ {\rm for \ a.e.}\ x\in \Omega.
\end{equation}
To do this, the equality in \eqref{sharp-log-Sobolev-1} implies that equalities should occur  both in the AM-GM inequality \eqref{am-gm} and \eqref{log-concave}; namely, $D^2_A\phi(x)=I_n$ for a.e.\ $x\in E$ and  
\begin{equation}\label{log-concave-1}
	\log\left(\frac{\omega(\nabla \phi(x))}{\omega(x)}\right)= -\tau+\frac{\nabla \omega (x)\cdot \nabla \phi(x)}{\omega (x)}\ \ {\rm for\ a.e.}\ x\in E.	
\end{equation}
In particular, the former relation implies that det$(D^2_A\phi(x))=1$ for a.e.\ $x\in E$, thus   the Monge-Amp\`ere equation \eqref{Monge-Ampere-log-1} reduces to 
\begin{equation}\label{Monge-Ampere-log-2}
	|u(x)|\omega(x)= \tilde C_1\omega(\nabla\phi(x)) \ \ {\rm for\ a.e.}\ x\in \Omega.
\end{equation}
Moreover, since $\nabla \phi(\overline E)\subseteq \overline E$, Proposition \ref{prop-log-concave} implies that $\nabla \omega (x)\cdot \nabla \phi(x)\geq 0$ for a.e. $x\in E$; thus, combining this inequality with \eqref{log-concave-1} and \eqref{Monge-Ampere-log-2}, the requested inequality \eqref{u-estimate} immediately follows.  

In the sequel, we prove that $\Delta_s\phi[K]=0$ for every compact set $K\subset \Omega.$ The equality in \eqref{sharp-log-Sobolev-1} also implies that 
 \begin{equation}\label{eq:limlim-1}
	\lim_{\varepsilon \to0}\left\langle |u|_{\varepsilon}\omega, \Delta_s \phi\right\rangle_{\mathcal D'}=0,
\end{equation}
see inequality \eqref{eq:Bypartsp=1}, where $(|u|)_\varepsilon\in BV(\omega;E)\cap C_c^{\infty}(\Omega)$ is the sequence approximating $|u|$ described in the proof of Proposition \ref{prop:p=1}, namely
\[
|u|_\varepsilon=\sum_{k=1}^\infty \eta_{\epsilon_k} \star (|u|\zeta_k),
\]
where $\zeta_k\in C_c^\infty(V_k)$ are relative to a partition of unity associated to the family of open sets $V_k$ and, according to Remark \ref{rem:uniformzeta}, $\varepsilon_k$ is such that
\begin{equation}\label{eta-needed}
	|(\eta_{\varepsilon_k}\star \zeta_k)(x)-\zeta_k(x)|< \frac{\varepsilon}{2^k}, \quad \forall x\in \Omega.
\end{equation} 
Let  $K\subset \Omega$ be a compact set; we claim that 
\begin{equation}\label{eq:stimadalbasso}
|u|_\varepsilon(x)\geq \frac {\tilde C_1}2 e^{-\tau}, \quad \forall x\in K, \forall \varepsilon \in \left(0,1/2\right).
\end{equation}
In fact, by \eqref{eta-needed}, for every $x\in K$ we have
\[
\begin{aligned}
|u|_\varepsilon(x)&=\sum_{k=1}^\infty \eta_{\epsilon_k} \star (|u|\zeta_k)=\sum_{k=1}^\infty\int_{\Omega} \eta_{\varepsilon_k}(x-y) |u|(y)\zeta_k(y)dy\\
& \geq \tilde C_1e^{-\tau}\sum_{k=1}^\infty(\eta_{\varepsilon_k}\star \zeta_k)(x)\geq \tilde C_1e^{-\tau}\sum_{k=1}^\infty\left(\zeta_k(x)-\frac{\varepsilon}{2^k}\right)\\
&=\tilde C_1e^{-\tau}(1-\varepsilon).
\end{aligned}
\]
Finally, since $\Omega\subset E$, we have that $K\cap \partial E=\emptyset$ and $\omega_K\coloneqq \min_K \omega>0$. 
 Therefore, by \eqref{eq:limlim-1} and \eqref{eq:stimadalbasso}, it yields that
\[
0=	\lim_{\varepsilon\to0}\left\langle |u|_{\varepsilon}\omega, \Delta_s \phi\right\rangle_{\mathcal D'}\geq \frac{\tilde C_1}{2} e^{-\tau}\omega_{K}\Delta_s\phi[K].
\]
Note that $\Delta_s\phi[K]\geq 0$; thus, we necessarily have that 
$\Delta_s\phi[K]=0$, which ends the proof. 

In particular, we have proved that $\phi\in W^{2,1}(\Omega)$. We aim to prove that $u$ is constant a.e.\ on $\Omega$.
Since $D^2\phi=I_n$ a.e.\ on $E$, there exist  a family of connected open sets $\Omega_i\subset E$ and points $x_0^i\in \mathbb R^n$, 
 $i\in I$, such that $\Omega=\bigcup_{i\in I} \Omega_i$ and
\[
\nabla\phi(x)=x+x_0^i \quad  \text{ for a.e. $x\in \Omega_i$}.
\]
Notice that by the Monge-Amp\`ere equation, up to a null-measure set, $\Omega_i\subseteq \{x\in E: x+x_0^i\in E, |x+x_0^i|<1 \}$, so that
\[
\Omega_i\subseteq (B(0,1)\cap E-x_0^i)\cap E.
\]
In particular $\mathcal L^n(\partial \Omega)=0$.
We can now localize \eqref{log-concave-1} on $\Omega_i$ and write
\begin{equation}\label{eq:log-concave-local}
\log\left( \frac{\omega(x+x_0^i)}{\omega(x)}\right)=-\tau+\frac{\nabla\omega(x)\cdot(x+x_0^i)}{\omega(x)} \quad \text{for a.e.\ $x\in \Omega_i$}.
\end{equation}
Arguing as in the case $p>1$, we can find $c_i>0$ such that
\begin{equation}\label{eq:proportionality}
	\omega(x+x_0^i)=c_i\omega(x), \quad \forall x\in \Omega_i;
\end{equation}
in fact, it turns out that $c_i=1$ for every $i\in I.$ 
Thus, the Monge-Ampère equation \eqref{Monge-Ampere-log-1} implies that for every $i\in I$, 
\begin{equation}\label{u-lambda-components}
	|u|(x)=\tilde C_1 \mathbbm 1_{B\cap E} (x+x_0^i)\quad  \text{for a.e. $x\in \Omega_i$}.
\end{equation}
We now exploit the equality in \eqref{eq:Bypartsp=1}. In particular, we have
\[
n\int_\Omega \tilde C_1 \omega(x)dx+\sum_{i\in I}\int_{\Omega_i} \tilde C_1\nabla\omega(x)\cdot (x+x_0^i)dx=\tilde C_1  \sum_{i\in I}P_{\omega}(\Omega_i;E),
\]
where $P_{\omega}(\Omega;E)$ stands for the weighted perimeter of $\Omega$ relative to the cone $E$; see e.g.  Cabr\'e,  Ros-Oton and Serra
\cite[p. 2977]{Cabre-Ros-Oton-Serra}; indeed, one has that $\|D\mathbbm 1_{\Omega}\|_\omega(E)=P_{\omega}(\Omega;E)$. Taking into account that $\nabla \omega(x)\cdot x_0^i=0$ for every $x\in E$ and $i\in I$ (see \eqref{eq:log-concave-local}), the latter relation implies that
\begin{equation}\label{eq:Isoperimetric}
(n+\tau)\int_\Omega \omega(x)dx=\sum_{i\in I}P_{\omega}(\Omega_i;E).
\end{equation}

We now prove that $\Omega$ has just one connected component. 
Since $\Omega_i\cap \Omega_j=\emptyset$ for $i\neq j$, then 
$$\sum_{i\in I}P_{\omega}(\Omega_i;E)=P_{\omega}(\Omega;E)$$ and -- since the optimal mass transport map $\nabla\phi$ is essentially one-to-one (see \cite[Theorem 2.12(iv)]{Villani}) -- we have that
\begin{equation}\label{eq:lebesguenull}
\mathcal L^n((\Omega_i+x_0^i)\cap (\Omega_j+x_0^j))=0,\ i\neq j.
\end{equation}
In particular, up to null-measure sets, we have that
$
\bigcup_{i\in I}(\Omega_i+x_0^i)=B\cap E.
$
 By \eqref{eq:proportionality} with $c_i=1$ we obtain   that
\[
\int_{\Omega_i}\omega(x) dx=\int_{\Omega_i}\omega(x+x_0^i)dx=\int_{\Omega_i+x_0^i}\omega(x)dx,\ \ i\in I.
\]
Combining the previous fact with \eqref{eq:lebesguenull}, it follows that 
\begin{equation}\label{eq:samevolume}
\int_\Omega\omega(x)dx=\int_{B\cap E} \omega(x)dx.
\end{equation}
Consequently, by using \eqref{eq:samevolume}, the equation   \eqref{eq:Isoperimetric} can be written into the equivalent form 
$$(n+\tau)\left(\int_{B\cap E} \omega(x)dx\right)^{\frac{1}{n+\tau}}\left(\int_\Omega \omega(x)dx\right)^{1-\frac{1}{n+\tau}}=P_{\omega}(\Omega;E).$$ 
The latter relation means that we have equality in the weighted isoperimetric inequality, and since $\omega^\frac{1}{\tau}$ is concave (see Proposition \ref{prop-log-concave}), we are within the setting of  Cinti,  Glaudo,  Pratelli,  Ros-Oton and Serra \cite{CGPROS}; namely, $\Omega$ has the form $\Omega=(B\cap E)-x_0$ for some $x_0\in \partial E\cap (-\partial E)$. In particular, $\Omega$ has only one connected component. 

An alternative, self-contained proof of this fact can be also obtained by observing -- similarly as above by $c_i=1$ and relation \eqref{eq:proportionality}  -- that  
\begin{equation}\label{eq:sameperimeter}
P_\omega(\Omega_i;E)=P_\omega(\Omega_i+x_0^i;E),\ \ i\in I.
\end{equation}
By a radial change of coordinates we also have
\begin{equation}\label{eq:volumeperimeter}
(n+\tau)\int_{B(0,1)\cap E}\omega(x)dx=\int_{\mathbb S^{n-1}\cap E}\omega d\mathcal H^{n-1}=P_\omega(B\cap E; E).
\end{equation}
Combining \eqref{eq:volumeperimeter}, \eqref{eq:sameperimeter} and \eqref{eq:samevolume} with \eqref{eq:Isoperimetric} we get
\begin{equation}\label{eq:summingup}
\begin{aligned}
\sum_{i\in I} P_\omega(\Omega_i+x_0^i;E)&=\sum_{i\in I}P_\omega(\Omega_i;E)=P_\omega(\Omega;E)=(n+\tau)\int_\Omega \omega dx\\
&=(n+\tau)\int_{B\cap E}\omega dx=P_\omega(B\cap E; E). 
\end{aligned}
\end{equation} 
If, by contradiction we would have $\#I\geq 2$, then the perimeter of $B\cap E$ would equal the sum of the perimeters of some of its open subsets, contradicting the connectedness of $B\cap E$. Therefore, there is $x_0\in \mathbb R^n$ such that
$
\Omega=((B\cap E)-x_0)\cap E,
$ as before. 

Notice that we can replace $\nabla\phi$ with a representative such that $\nabla\phi(x)=x+x_0$ for every $x\in \Omega$. Moreover, since  $\nabla\phi(\Omega)=B\cap E$, we also have that 
$
\Omega=B\cap E-x_0.
$
Thus, by the fact that $B\cap E-x_0=\Omega\subset E$, we have $-x_0\in \overline E$ and  
 relation \eqref{eq:proportionality} implies
\begin{equation}\label{eq:proportionality-1}
\omega(x+x_0)=\omega(x), \quad \forall x\in B\cap E-x_0.
\end{equation}
Assume by contradiction that $-x_0\in E$. Then, for every $t\in (0, 1/|x_0|)$, one has that $-tx_0\in B\cap E$. Applying \eqref{eq:proportionality-1} to $x=-(t+1)x_0\in  B\cap E-x_0$ yields
$
t^{\tau}\omega(-x_0)=(t+1)^\tau\omega(-x_0),
$
a contradiction. Hence $-x_0\in \partial E$. Since $\nabla\phi(B\cap E -x_0)=B\cap E$, we also have $x_0\in \overline E$. Since $-x_0\notin E$ and $E$ is convex, it cannot be $x_0\in E$. In particular $x_0\in \partial E \cap (-\partial E)$, as required.

Relation \eqref{u-lambda-components} implies that  
\[
|u|(x)=\tilde C_1 \mathbbm 1_{B\cap E} (x+x_0)\quad  \text{for a.e. $x\in B\cap E-x_0$},
\]
thus a change of variables and a scaling argument provide the general form of the extremal functions \eqref{Indicator}. In fact, it is enough to observe that the equality in inequality \eqref{log-Sobolev-1} holds for $u$ if and only if it holds for $x\mapsto \lambda^{n+\tau}u(\lambda x)$, $\lambda>0$. \hfill $\square$


\subsection{Proof of the inequality  \eqref{sharp-log-Sobolev-1}: case $p=1$, $\tau=0$} Since $\omega$ is  constant in $E$, see 
 Proposition \ref{prop-log-concave}, the proof from \S \ref{subsection-4-1} can be easily adapted to the present setting.  \hfill $\square$

\subsection{Equality in \eqref{sharp-log-Sobolev-1}: case $p=1$, $\tau=0$} The proof works exactly as in \S \ref{subsection-4-2} with some minor changes. 
In this case, the bijectivity of $\nabla \phi$ only gives $x_0\in (-\overline E)\cap \overline E$.   
Alternatively, we may obtain the same result by  Figalli,  Maggi and  Pratelli \cite{FMP2}. \hfill $\square$

%
%

\section{Application: sharp weighted  hypercontractivity}\label{section-5}

We recall that for a given function $g\colon E\to \mathbb R$, the Hopf-Lax formula has the expression
\begin{equation}\label{inf-convolution}
	{\bf Q}_{t}g(x)\coloneqq {\bf Q}_{t}^pg(x)=\inf_{y\in E}\left\{g(y)+\frac{|y-x|^{p'}}{p't^{p'-1}}\right\},
\end{equation}
where  $E\subseteq \mathbb R^n$ is an open convex cone and $p>1$;  we assume that $t>0$ is fixed such that ${\bf Q}_{t}g(x)>-\infty$ for all $x$ in $E$.  

We recall that in the classical literature (see e.g.\ Evans \cite{Evans}) it is assumed that $g$ is bounded  Lipschitz and as a consequence one obtains that the function $(x,t) \to {\bf Q}_t(g)(x)$ is well-defined for all $(x,t) \in E\times (0, \infty)$. Moreover, this function is locally Lipschitz and for a.e.   $(x,t)\in E\times (0,\infty)$ the Hamilton-Jacobi equation holds:  
		\begin{equation}\label{H-J}
			\frac{\partial}{\partial t}{\bf Q}_{t}g(x)+\frac{|\nabla {\bf Q}_{t}g(x)|^p}{p}=0.
		\end{equation}
		
		For our purposes the space of bounded Lipschitz functions does not work well. Indeed, observe that if $g$ is bounded and Lipschitz, then $\|e^g\|_{L^\alpha(\omega;E)}= +\infty$ and thus relation \eqref{hyperc-estimate} becomes trivial. 
		
		In order to handle this issue we shall consider the family of functions already considered in the Introduction of the paper: for some $t_0>0$, let
$$\mathcal  F_{t_0}(E)\coloneqq  \left\{ g:E\to \mathbb R:\begin{array}{lll}
	g {\rm \ is\ measurable,\ bounded\ from\ above\ and}\  \\
	{\rm there\ exists}\ x_0\in E {\rm \ such \ that} \ {\bf Q}_{t_0}g(x_0)>-\infty
\end{array}\right\}.
$$
  Our first result shows that if $t \in (0, t_0)$ then  $(x,t)\mapsto {\bf Q}_{t}g(x)$ is well-defined, locally Lipschitz, satisfies the Hamilton-Jacobi equation almost everywhere and also has strong global integrability properties: 
\begin{proposition}\label{proposition-hyper}
	Let $p>1$, $t_0>0$, $E\subseteq \mathbb R^n$ be an open convex cone and  $g \in \mathcal  F_{t_0}(E)$. Then the following statements hold. 
	\begin{itemize}
		\item[(i)] 	For every $(x,t)\in E\times (0,t_0)$ one has  ${\bf Q}_{t}g(x)>-\infty$ and for a.e.   $(x,t)\in E\times (0,t_0)$ the Hamilton-Jacobi equation \eqref{H-J} holds. 
	\item[(ii)] If  $\omega:E\to (0,\infty)$ is a homogeneous function and $0<\alpha_1\leq \alpha_2$ with $e^g\in L^{\alpha_1 p}(\omega;E)\cap L^{\alpha_2 p}(\omega;E)$ then for a.e. $t\in (0,t_0)$ the function $x\mapsto e^{q(t){\bf Q}_{t}g(x)}$ belongs to $W^{1,p}(\omega;E),$ where $q:[0,t_0]\to [\alpha_1,\alpha_2]$ is any  function.   
	\end{itemize}
\end{proposition}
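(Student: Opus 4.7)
\textbf{Plan for Proposition \ref{proposition-hyper}.}

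For part (i), the starting point is that $g\in\mathcal F_{t_0}(E)$ yields, through the definition of ${\bf Q}_{t_0}g(x_0)>-\infty$, the quadratic-type lower bound
\[
g(y)\geq {\bf Q}_{t_0}g(x_0)-\frac{|y-x_0|^{p'}}{p't_0^{p'-1}}\quad\text{for every }y\in E.
\]
Inserting this into the definition of ${\bf Q}_tg(x)$ with $t<t_0$, the penalty $|y-x|^{p'}/(p't^{p'-1})$ has a strictly larger coefficient than $|y-x_0|^{p'}/(p't_0^{p'-1})$, so the bracketed expression is coercive in $y$ and in particular bounded below; hence ${\bf Q}_tg(x)>-\infty$. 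The same coercivity, applied uniformly on a compact set $K\times[\tau_1,\tau_2]\subset E\times(0,t_0)$, forces any essentially competing $y$ to lie in a single ball $B_R$, so ${\bf Q}_tg$ equals the infimum over $y\in E\cap B_R$ of a family of functions of $(x,t)$ with uniformly bounded gradients on $K\times[\tau_1,\tau_2]$. Consequently, ${\bf Q}_tg$ is locally Lipschitz on $E\times(0,t_0)$ and a.e.\ differentiable by Rademacher. At any differentiability point, one applies the envelope theorem at a minimizer $y^*(x,t)\in E\cap B_R$: the identity $p(p'-1)=p'$ makes $|\nabla_x{\bf Q}_tg|^p=|y^*-x|^{p'}/t^{p'}$ cancel exactly $-p\,\partial_t{\bf Q}_tg=|y^*-x|^{p'}/t^{p'}$, giving \eqref{H-J}.

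For part (ii), plugging $y=x$ in the infimum gives the crucial a priori bound ${\bf Q}_tg(x)\leq g(x)$. Together with the log-convexity of $L^s$ norms in $s$, the hypothesis $e^g\in L^{\alpha_1p}(\omega;E)\cap L^{\alpha_2p}(\omega;E)$ yields $\int_E e^{\alpha p{\bf Q}_tg}\omega\,dx<\infty$ uniformly for $\alpha\in[\alpha_1,\alpha_2]$, which handles the $L^p(\omega;E)$ part. For the gradient, rewrite \eqref{H-J} pointwise a.e.\ in the form
\[
e^{\alpha p{\bf Q}_tg(x)}|\nabla{\bf Q}_tg(x)|^p=-\alpha^{-1}\partial_t\!\left(e^{\alpha p{\bf Q}_tg(x)}\right),
\]
and use that $t\mapsto{\bf Q}_tg(x)$ is decreasing (because the penalty coefficient decreases in $t$) to integrate in $t\in(0,t_0)$, obtaining via Tonelli and ${\bf Q}_tg\leq g$ that
\[
\int_0^{t_0}\!\!\int_E e^{\alpha p{\bf Q}_tg}|\nabla{\bf Q}_tg|^p\omega\,dx\,dt\leq \alpha^{-1}\int_E e^{\alpha pg}\omega\,dx<\infty.
\]
By Fubini this produces a full-measure set $T_\alpha\subset(0,t_0)$ on which the inner integral is finite; taking $T:=T_{\alpha_1}\cap T_{\alpha_2}$ and writing $\alpha=\theta\alpha_1+(1-\theta)\alpha_2$, the factorization
\[
e^{\alpha p{\bf Q}_tg}|\nabla{\bf Q}_tg|^p=\bigl(e^{\alpha_1p{\bf Q}_tg}|\nabla{\bf Q}_tg|^p\bigr)^\theta\bigl(e^{\alpha_2p{\bf Q}_tg}|\nabla{\bf Q}_tg|^p\bigr)^{1-\theta}
\]
together with H\"older's inequality transports finiteness to every $\alpha\in[\alpha_1,\alpha_2]$ simultaneously at each $t\in T$. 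Applied with $\alpha=q(t)$, this yields the desired $W^{1,p}(\omega;E)$ membership on the full-measure set $T$, which is independent of the particular choice of $q$.

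The principal obstacle is part (i): promoting ${\bf Q}_tg$ from a merely finite object on $E\times(0,t_0)$ to a locally Lipschitz function satisfying \eqref{H-J} almost everywhere, given that classical treatments (such as \cite{Evans}) assume $g$ bounded and Lipschitz. The key technical input is that the strict inequality $t<t_0$ provides precisely the coercivity needed to localize the effective minimizers in a bounded region and thereby reduce the situation to a classical Lipschitz setting on each compact. Once (i) is established, part (ii) becomes a clean application of \eqref{H-J}, the monotonicity of ${\bf Q}_tg$ in $t$, and two-parameter H\"older interpolation, as sketched above.
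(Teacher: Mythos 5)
Your proposal is correct and follows essentially the same route as the paper: coercivity from ${\bf Q}_{t_0}g(x_0)>-\infty$ to localize the effective minimizers and obtain local Lipschitzness, Rademacher plus the classical Hopf--Lax argument for \eqref{H-J}, and then for (ii) the monotonicity of $t\mapsto {\bf Q}_tg(x)$ combined with \eqref{H-J} to control the gradient term, followed by interpolation between $\alpha_1$ and $\alpha_2$. The only (immaterial) difference is that in (ii) you integrate the pointwise identity in $t$ and invoke Tonelli/Fubini, whereas the paper differentiates $H_i(t)=\int_E e^{p\alpha_i{\bf Q}_tg}\omega\,dx$ and uses that a monotone function has a finite derivative a.e.
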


\begin{proof}
	(i)  Let $K\subset E$ be any compact set and $0<t_1<t_2<t_0.$ We are going to prove that the function ${\bf Q}_{t}g(x)$, introduced in \eqref{inf-convolution}, is well-defined and Lipschitz continuous on $K\times [t_1,t_2]$. Since $t_2<t_0$ then for every $t\in [t_1,t_2]$, we have the coercivity property 
	$$\lim_{y\in E,|y|\to \infty}\left\{\frac{|y-x|^{p'}}{p't^{p'-1}}-\frac{|y-x_0|^{p'}}{p't_0^{p'-1}}\right\}=+\infty,$$
uniformly in $x\in K$. Consequently, the latter coercivity property  and the assumption ${\bf Q}_{t_0}g(x_0)>-\infty$ imply that for every $(x,t)\in K\times [t_1,t_2]$ one has
\begin{eqnarray*}
	{\bf Q}_{t}g(x)&=&\inf_{y\in E}\left\{g(y)+\frac{|y-x|^{p'}}{p't^{p'-1}}\right\}\\&\geq &\inf_{y\in E}\left\{g(y)+\frac{|y-x_0|^{p'}}{p't_0^{p'-1}}\right\}+\inf_{y\in E}\left\{\frac{|y-x|^{p'}}{p't^{p'-1}}-\frac{|y-x_0|^{p'}}{p't_0^{p'-1}}\right\}\\&=&
{\bf Q}_{t_0}g(x_0)+\inf_{y\in E}\left\{\frac{|y-x|^{p'}}{p't^{p'-1}}-\frac{|y-x_0|^{p'}}{p't_0^{p'-1}}\right\} >-\infty.
\end{eqnarray*}
In addition, we can find $R>0$ such that for every $t\in [t_1,t_2]$, we have $${\bf Q}_{t}g(x)=\inf_{y\in E\cap B(0,R)}\left\{g(y)+\frac{|y-x|^{p'}}{p't^{p'-1}}\right\}.$$
Note that the function $(x,t)\mapsto g(y)+\frac{|y-x|^{p'}}{p't^{p'-1}}$, $(x,t)\in K\times [t_1,t_2]$, is uniformly Lipschitz in $y$, thus $(x,t)\mapsto {\bf Q}_{t}g(x)$ is also  Lipschitz,  being the infimum of a family of Lipschitz functions. Thus, by Rademacher's theorem it follows that $(x,t)\mapsto {\bf Q}_{t}g(x)$   is differentiable a.e.\ in $K\times [t_1,t_2]$. In particular, by the arbitrariness of $K$ and $t_1,t_2\in (0,t_0)$, it turns out that $(x,t)\mapsto {\bf Q}_{t}g(x)$   is differentiable a.e.\ on $\mathbb R^n\times (0,t_0)$. Now, we can apply the proof of \cite[Theorem 5, p.\ 128]{Evans} to check that in the differentiable points of  $(x,t)\mapsto {\bf Q}_{t}g(x)$, the Hamilton-Jacobi equation \eqref{H-J} holds.  In fact, the quoted result is proved on the whole $\mathbb R^n$, for bounded Lipschitz initial datum $g$, but it can be easily adapted to $E\subseteq \mathbb R^n$ and $g \in \mathcal  F_{t_0}(E)$.

(ii) Let $0<\alpha_1\leq \alpha_2$  be two numbers with $e^g\in L^{\alpha_1 p}(\omega;E)\cap L^{\alpha_2 p}(\omega;E)$  and any function  $q:[0,t_0]\to [\alpha_1,\alpha_2]$. We first claim that $e^{q(t){\bf Q}_{t}g}\in L^p(\omega;E)$ for every $t\in (0,t_0)$. To do this, we  observe by definition that 
${\bf Q}_{t}g\leq g$ for every $t\in (0,t_0)$. Consequently, since $q(t)\in [\alpha_1,\alpha_2]$ for every $t\in [0,t_0]$, we have for every $t\in (0,t_0)$ and $x\in E$ that 
\begin{eqnarray*}
	0\leq e^{pq(t){\bf Q}_{t}g(x)}&\leq& e^{p\max\{\alpha_1 {\bf Q}_{t}g(x),\alpha_2 {\bf Q}_{t}g(x)\}}=\max\{e^{p\alpha_1 {\bf Q}_{t}g(x)},e^{p\alpha_2 {\bf Q}_{t}g(x)}\}\\&\leq& \max\{e^{p\alpha_1 g(x)},e^{p\alpha_2 g(x)}\}.
\end{eqnarray*}
Since by assumption one has $e^g\in L^{\alpha_1 p}(\omega;E)\cap L^{\alpha_2 p}(\omega;E)$,  the claim directly follows by the latter estimate. 

Now, for every $t\in (0,t_0)$ we consider the functions 
$$H_i(t)=\int_Ee^{p\alpha_i{\bf Q}_{t}g(x)}\omega(x)dx,\ \ i=1,2.$$
By the previous step, $H_i$ is well-defined on $ (0,t_0)$; moreover, since $t\mapsto {\bf Q}_{t}g(x)$ is non-increasing for every $x\in E$, the same holds for $H_i$ as well, $i=1,2.$ In particular, $H_i$ is differentiable a.e. on $(0,t_0)$ and $-\infty<\frac{d}{dt}H_i(t)<\infty$ for a.e.\ $t\in (0,t_0)$. In addition, by the Hamilton-Jacobi equation \eqref{H-J} if follows that for $i=1,2$ and for a.e. $t\in (0,t_0)$  we have
$$-\infty<\frac{d}{dt}H_i(t)=p\alpha_i\int_E\frac{\partial}{\partial t}{\bf Q}_{t}g(x) e^{p\alpha_i{\bf Q}_{t}g(x)}\omega(x)dx=-\alpha_i\int_E|\nabla {\bf Q}_{t}g(x)|^p e^{p\alpha_i{\bf Q}_{t}g(x)}\omega(x)dx,$$
i.e., 
$$\int_E|\nabla {\bf Q}_{t}g(x)|^p e^{p\alpha_i{\bf Q}_{t}g(x)}\omega(x)dx<+\infty,\ \ i=1,2,\ {\rm for \ a.e.}\ t\in (0,t_0).$$
By trivial interpolation, since $q(t)\in [\alpha_1,\alpha_2]$ for every $t\in [0,t_0]$, we obtain that
$$\int_E|\nabla {\bf Q}_{t}g(x)|^p e^{pq(t){\bf Q}_{t}g(x)}\omega(x)dx<+\infty\ \ {\rm for \ a.e.}\ t\in (0,t_0),$$
which is equivalent to 
$$\int_E|\nabla e^{q(t){\bf Q}_{t}g(x)}|^p \omega(x)dx<+\infty\ \ {\rm for \ a.e.}\ t\in (0,t_0).$$
In particular, the fact that $e^{q(t){\bf Q}_{t}g}\in L^p(\omega;E)$ combined with the last estimate implies that  $ e^{q(t){\bf Q}_{t}g}\in W^{1,p}(\omega;E)$ for a.e. $t\in (0,t_0)$. 
\end{proof}
Let us observe that if $g\colon E \to \mathbb R$ is bounded from above, then we can consider 
$$T = T(g) \coloneqq  \sup\{ t_0: g \in \mathcal  F_{t_0}(E) \}.$$
Clearly, the statement of the above proposition holds for the number $T$ replacing $t_0$.
On the other hand,  if $t >T$ then it also  follows from the statement of the proposition that ${\bf Q}_t(g)(x) = - \infty$ for all $x\in E$, and thus the statement of Theorem \ref{theorem-main-3} becomes trivial for the values $\beta >T$.

After this preparation, we are ready to prove Theorem \ref{theorem-main-3}. 

\subsection{Proof of the inequality \eqref{hyperc-estimate}} We first assume that $\alpha<\beta.$
 We  claim that 
 $g\in \mathcal F_{t_0}(E)$ with $e^g\in  L^\alpha(\omega;E)$ implies that $e^g\in L^{\gamma}(\omega;E)$ for every $\gamma\geq \alpha.$ Indeed, let $M>0$ be such that $g(x)\leq M$ for every $x\in E$. In addition, let $S=\{x\in E:g(x)\geq 0\}$. In particular, we have that $$\mu_\omega(S)\coloneqq \int_S\omega(x)dx\leq \int_Se^{\alpha g(x)}\omega(x)dx\leq \|e^g\|_{L^\alpha(\omega;E)}^\alpha<+\infty.$$
Moreover, since $\gamma\geq \alpha$, one has that
\begin{eqnarray*}
	\|e^g\|_{L^\gamma(\omega;E)}^\gamma&=&\int_Se^{\gamma g(x)}\omega(x)dx+\int_{E\setminus S}e^{\gamma g(x)}\omega(x)dx\leq e^{\gamma M}\mu_\omega(S)+\int_{E\setminus S}e^{\alpha g(x)}\omega(x)dx\\&\leq & (e^{\gamma M}+1)\|e^g\|_{L^\alpha(\omega;E)}^\alpha<+\infty,
\end{eqnarray*}
which ends the proof of the claim. 

Consequently, since $p>1$ and $\beta >  \alpha$, we have that $e^g\in L^{\alpha p}(\omega;E)\cap L^{\beta p}(\omega;E)$; moreover, by Proposition \ref{proposition-hyper}/(ii),  
for every function $q:[0,t_0]\to [\alpha,\beta]$  one has that  
\begin{equation}\label{sobolev-inclusion}
	e^{q(t){\bf Q}_{t}g}\in W^{1,p}(\omega;E)\ \ {\rm for\   a.e.}\ t\in (0,t_0).
\end{equation}

Fix arbitrarily $\tilde t\in (0,t_0)$ and let  $q:[0,\tilde t]\to [\alpha,\beta]$ be the function 
\begin{equation}\label{q-function}
	q(t)=\frac{\alpha\beta}{(\alpha-\beta)t/\tilde t+\beta},\ \ t\in [0,\tilde t].
\end{equation}
Note that $q$ is increasing on $[0,\tilde t]$ and   $q(0)=\alpha$ and $q(\tilde t)=\beta$. Since   relation \eqref{sobolev-inclusion} is valid a.e. on $(0,\tilde t)$,   the function 
$$F(t)=\|e^{{\bf Q}_{t}g}\|_{L^{q(t)}(\omega;E)}>0,\ \ t\in [0,\tilde t],$$ 
 is well-defined for a.e.\ $t\in [0,\tilde t]$ and absolutely continuous on $[0,\tilde t]$, see the proof of Proposition \ref{proposition-hyper}/(i). 
A direct computation and the Hamilton-Jacobi equation \eqref{H-J}  imply that for a.e. $t\in [0,\tilde t]$ we have 
\begin{eqnarray}\label{derivative-of -F}
\nonumber	F'(t)&=&F(t)^{1-q(t)}\frac{q'(t)}{q^2(t)}\left(\mathcal E_{\omega,E}(	e^{q(t){\bf Q}_{t}g})-F(t)^{q(t)}\log(F(t))^{q(t)}\right.\\&&\left.\qquad\qquad\qquad\qquad -\frac{q^{2-p}(t)}{pq'(t)}\int_Ee^{q(t){\bf Q}_{t}g(x)}|\nabla (q(t){\bf Q}_{t}g(x))|^p\omega(x)dx\right),
\end{eqnarray}
where we used the notation for the entropy from relation \eqref{sharp-log-Sobolev}. In fact, due to  \eqref{sobolev-inclusion}, we may apply the log-Sobolev inequality \eqref{sharp-log-Sobolev} for the normalized function	$u\coloneqq \frac{e^{\frac{q(t)}{p}{\bf Q}_{t}g}}{F(t)^\frac{q(t)}{p}}\in W^{1,p}(\omega;E)$ for a.e. $ t\in (0,\tilde t)$, obtaining that 
\begin{equation}\label{log-1}
	\frac{\mathcal E_{\omega,E}(	e^{q(t){\bf Q}_{t}g})}{F(t)^{q(t)}}-\log(F(t))^{q(t)}\leq \frac{n+\tau}{p}	\log\left(\frac{\mathcal L_{\omega,p}}{p^p}\frac{\displaystyle\int_Ee^{q(t){\bf Q}_{t}g(x)}|\nabla (q(t){\bf Q}_{t}g(x))|^p\omega(x)dx}{F(t)^{q(t)}}\right).
\end{equation}
By using the elementary inequality $\log(ey)\leq y$ for every $y>0$ (with equality only for $y=1$), the right hand side \textit{RHS} of the above inequality can be estimated for every $s>0$ by  
\begin{equation}\label{log-2}
	RHS\leq\frac{n+\tau}{p} \log\left(s\frac{\mathcal L_{\omega,p}}{ep^p}\right)+\frac{n+\tau}{ps}\frac{\displaystyle\int_Ee^{q(t){\bf Q}_{t}g(x)}|\nabla (q(t){\bf Q}_{t}g(x))|^p\omega(x)dx}{F(t)^{q(t)}}.
\end{equation}
 Combining relations \eqref{derivative-of -F}-\eqref{log-2} with the choice 
$$s\coloneqq \frac{(n+\tau)q'(t)}{q^{2-p}(t)}>0,$$
it turns out that 
\begin{equation}\label{F-function}
	\frac{F'(t)}{F(t)}\leq \frac{n+\tau}{p}\frac{q'(t)}{q^2(t)} \log\left(\frac{\mathcal L_{\omega,p}}{ep^p}\frac{(n+\tau)q'(t)}{q^{2-p}(t)}\right)\ \ {\rm for\   a.e.}\ t\in [0,\tilde t].
\end{equation}
After an integration of the above inequality on $[0,\tilde t]$, we obtain that 
$$\log\frac{F(\tilde t)}{F(0)}\leq \frac{n+\tau}{p}\int_0^{\tilde t}\frac{q'(t)}{q^2(t)} \log\left(\frac{\mathcal L_{\omega,p}}{ep^p}\frac{(n+\tau)q'(t)}{q^{2-p}(t)}\right)dt.$$
The choice of the function $q$  is confirmed at this stage, since the minimum of the right hand side of the above estimate is realized by $q$ from  \eqref{q-function}, solving the corresponding  Euler-Lagrange equation with the constraints $q(0)=\alpha$ and $q(\tilde t)=\beta$.

   Since $F(\tilde t)=\|e^{{\bf Q}_{\tilde t}g}\|_{L^{\beta}(\omega;E)}$ and $F(0)=\|e^{g}\|_{L^{\alpha}(\omega;E)}$, a direct computation of the above integral yields 
$$\|e^{{\bf Q}_{\tilde t}g}\|_{L^{\beta}(\omega;E)}\leq \|e^{g}\|_{L^{\alpha}(\omega;E)}\left(\frac{\beta-\alpha}{\tilde t}\right)^{\frac{n+\tau}{p}\frac{\beta-\alpha}{\alpha\beta}}\frac{\alpha^{\frac{n+\tau}{\alpha\beta}(\frac{\alpha}{p}+\frac{\beta}{p'})}}{\beta^{\frac{n+\tau}{\alpha\beta}(\frac{\beta}{p}+\frac{\alpha}{p'})}}\left((p')^\frac{n+\tau}{p'}\Gamma\left(\frac{n+\tau}{p'}+1\right)\int_{B \cap E}\omega\right)^{\frac{\alpha-\beta}{\alpha\beta}},$$
which is precisely the inequality \eqref{hyperc-estimate} whenever $\alpha<\beta$. 

When $\alpha=\beta$,  the inequality \eqref{hyperc-estimate} reduces to $\|e^{{\bf Q}_{\tilde t}g}\|_{L^{\alpha}(\omega;E)}\leq \|e^{g}\|_{L^{\alpha}(\omega;E)},$ which directly follows by  ${\bf Q}_{\tilde t}g\leq g.$

\subsection{Equality in \eqref{hyperc-estimate}}
We first assume again that $\alpha<\beta$. If the function $g:E\to \mathbb R$ has the form from \eqref{extremal-hyperc}, after a direct computation we observe that equality holds in \eqref{hyperc-estimate}; a similar proof can be found in Gentil \cite{Gentil} for the unweighted case and $E=\mathbb R^n$. 

Conversely, assume that $\alpha<\beta$ and  there exits equality in \eqref{hyperc-estimate} for some $\tilde t \in (0,t_0)$ and $g\in \mathcal F_{\tilde t}(E)$ with $e^g\in  L^\alpha(\omega;E)$. In particular, by tracking back the inequalities in the proof, it turns out that we should have equalities both in \eqref{log-1} and \eqref{log-2}; namely,  we have that
\begin{equation}\label{first-rel}
	\frac{e^{\frac{q(t)}{p}{\bf Q}_{t}g(x)}}{F(t)^\frac{q(t)}{p}}=u_{\lambda_t,x_0^t}(x) \ \ {\rm for \ every}\ x\in E
\ {\rm and\ a.e.}\ t\in [0,\tilde t],\end{equation}
for some $\lambda_t>0$  and $x_0^t$ such that 
$x_0^t\in -\partial E\cap \partial E$ and $\omega(x+x_0^t)=\omega(x)$ for every $x\in E$ whenever $\tau>0$, and 
 $x_0^t\in -\overline E\cap \overline E$ and $\omega$ is constant in $E$ whenever $\tau=0$, see \eqref{Gaussian}, and
\begin{equation}\label{second-rel}
\frac{\displaystyle\int_Ee^{q(t){\bf Q}_{t}g(x)}|\nabla (q(t){\bf Q}_{t}g(x))|^p\omega(x)dx}{F(t)^{q(t)}}=\frac{(n+\tau)q'(t)}{q^{2-p}(t)}\ \ {\rm for\ a.e.}\ t\in [0,\tilde t].
\end{equation}
By replacing \eqref{first-rel} into \eqref{second-rel}, we obtain that
$$p^p\int_{E}|\nabla u_{\lambda_t,x_0^t}(x)|^p\omega(x)dx=\frac{(n+\tau)q'(t)}{q^{2-p}(t)}\ \ {\rm for\ a.e.}\ t\in [0,\tilde t].$$
Relation \eqref{Gaussian},  the fact that $\omega(x+x_0^t)=\omega(x)$ for every $x\in E$ whenever $\tau>0$ (and $\omega$ is constant in $E$ whenever $\tau=0$) and a change of variables imply that  
$$\int_{E}|\nabla u_{\lambda_t,x_0^t}(x)|^p\omega(x)dx=\int_{E}|\nabla u_{\lambda_t,0}(x)|^p\omega(x)dx=\lambda_t^{p-1}\frac{n+\tau}{p'}\frac{(p')^{p}}{p^p}.$$
Therefore,  one has that 
$$\lambda_t=\frac{1}{p'}\left(\frac{\beta-\alpha}{\tilde t}\right)^\frac{1}{p-1}\frac{\alpha\beta}{\left((\alpha-\beta)t/\tilde t+\beta\right)^{p'}},\ t\in[0,\tilde t].$$

\noindent Note that relation \eqref{first-rel} can be equivalently transformed into 
\begin{eqnarray}\label{Q-t-explicit}
\nonumber	{\bf Q}_{t}g(x)&=&\log F(t)+\frac{p}{q(t)}\log u_{\lambda_t,x_0^t}(x)\\&=&\nonumber\log F(t)++\frac{p}{q(t)}\log\left(\lambda_t^\frac{n+\tau}{pp'}\left(\Gamma\left(\frac{n+\tau}{p'}+1\right)\int_{B\cap E}\omega\right)^{-\frac{1}{p}}\right)-\frac{\lambda_t}{q(t)}|x+x_0^t|^{p'}\\&=:&h(t)-\frac{b_t}{p'}{|x+x_0^t|^{p'}},
\end{eqnarray}
which is valid for every $t\in (0,\tilde t].$

On one hand, we notice  that we have equality also in 
\eqref{F-function}. Moreover, a direct computation yields that the equality in \eqref{F-function} is equivalent to the vanishing of the 
 derivative of  $t\mapsto h(t)$ on  $(0,\tilde t)$; in particular, for every $t\in (0,\tilde t]$ we have  
\begin{equation}\label{h-constant}
	h(t)=\lim_{t\to 0}h(t)=:C\in \mathbb R.
\end{equation}


On the other hand, by the definition of 	${\bf Q}_t g$, one has that
$$	{\bf Q}_{t}g(x)=\inf_{y\in E}\left\{\frac{|y-x|^{p'}}{p't^{p'-1}}-(-g)(y)\right\}, \ x\in E,\ t\in (0,\tilde t].$$
The latter relation gives the hint to use properties of $c$-concave functions associated with the cost function $c(x,y)\coloneqq  c_t(x,y)=\frac{|x-y|^{p'}}{p't^{p'-1}}.$ 

Let us recall from Villani \cite{Villani} that a function $\phi: E\to \mathbb R$ is $c$-concave if it can be written as 
$$ \phi(x) =\inf_{y\in E} \{c(x,y) - \psi(y)\} , x\in E,$$
for some function $\psi:E \to \mathbb R$ that is not identically $- \infty$. We can denote the right side of the above expression by $\psi^c$ and call it the $c$-transform of $\psi$. In this way, a function is $c$-concave if it can be written as $\phi = \psi^c$ for some other function $\psi$. It can be easily verified just by the definition that $\phi \leq (\phi^c)^c$ and $\phi$ is $c$-concave if and only if $\phi = (\phi^c)^c$. 

Let us assume momentarily that $\phi = -g$ is $c_{\tilde t}$-concave. It follows by \cite[Proposition 5.47]{Villani} that $-g$ will be $c_t$-concave for all $0< t \leq \tilde t$.  Using that $$-g = ((-g)^{c_t}))^{c_t}$$  and writing the $c_t$-transform in terms of ${\bf Q}_t$ we obtain the relation
$$-g(y)=\inf_{x\in E}\left\{\frac{|y-x|^{p'}}{p't^{p'-1}}-	{\bf Q}_{t}g(x)\right\}={\bf Q}_t (-{\bf Q}_t g)(y),\ y\in E.$$ 
Therefore, by \eqref{Q-t-explicit} and \eqref{h-constant}, we have that 
\begin{equation}\label{g-explicit}
	-g(y)=-C+	{\bf Q}_t\left(\frac{b_t}{p'}|\cdot+x_0^t|^{p'}\right)(y),\ y\in E.
\end{equation}
A direct computation gives that for every $b>0$ and $x_0\in E$, one has	
\begin{equation}\label{Q-t-explicit-0}
	{\bf Q}_t\left(\frac{b}{p'}|\cdot+x_0^t|^{p'}\right)(y)=\frac{b}{p'\left(1+tb^{{p-1}}\right)^{p'-1}}|y+x_0^t|^{p'},\ y\in E;
\end{equation}
applying the latter formula for \eqref{g-explicit} and recalling that $b_t=\frac{p'\lambda_t}{q(t)}>0$, it turns out that 
\begin{equation}\label{function-g}
	g(y)=C-\frac{1}{p'}\left(\frac{\beta-\alpha}{\beta\tilde t}\right)^\frac{1}{p-1}|y+x_0^t|^{p'},\ y\in E.
\end{equation}
Since $g$ is independent on $t>0$, we necessarily have that $x_0\coloneqq x_0^t$ for every $t\in (0,\tilde t];$ this concludes the proof of the relation \eqref{extremal-hyperc} whenever $-g$ is  $c_{\tilde t}$-concave. A straightforward computation shows that $e^g\in  L^\alpha(\omega;E)$.

We also observe that $g\in \mathcal F_{\tilde t}(E)$. To see this,  by a direct computation  we have that 
\begin{equation}\label{Q-t}
	{\bf Q}_t\left(-\frac{b}{p'}|\cdot+x_0|^{p'}\right)(x)=-\frac{b}{p'\left(1-tb^{{p-1}}\right)^{p'-1}}|x+x_0|^{p'},\ x\in E,
\end{equation}
for every $x_0\in E$ and $0<tb^{p-1}<1.$ Clearly, $g$ is bounded from above, and as for $$b_0\coloneqq \left(\frac{\beta-\alpha}{\beta\tilde t}\right)^\frac{1}{p-1}$$ we have that $\tilde tb_0^{p-1}<1,$ it yields that 
$g\in \mathcal F_{\tilde t}(E)$, see  \eqref{Q-t}.

Finally, we shall remove the assumption that $-g$ is $c_{\tilde t}$-concave. Assume by contradiction that there exists another extremal function $\tilde g$ in \eqref{hyperc-estimate}  such that $-\tilde g$ is not $c_{\tilde t}$-concave.  By tracking back again the equality cases like in the first part of the proof, see \eqref{Q-t-explicit}, we obtain also for this function $\tilde g$  that 
\begin{equation}\label{Q-t-tilde}
	{\bf Q}_{t}\tilde{g}(x) = C-\frac{b_t}{p'}{|x+x_0|^{p'}},\ \ x\in E,\ t\in (0,\tilde t].
\end{equation}
Moreover,  
 if $g$ is the $c_{\tilde t}$-concave function from \eqref{function-g}, it turns out by relations \eqref{Q-t-tilde}  and \eqref{Q-t-explicit-0}  that
 $$-\tilde g(y)\leq {\bf Q}_{t}(-{\bf Q}_{t}\tilde{g})(y)=-g(y),\ y\in E,\ t\in (0,\tilde t],$$ 
 thus $\tilde g\geq g$ on $E.$  
 On the other hand, both functions $g$ and $\tilde g $ are  extremals in the hypercontractivity inequality \eqref{hyperc-estimate}, i.e., in particular, we have $$\frac{\|e^{{\bf Q}_{\tilde t}\tilde g}\|_{L^{\beta}(\omega;E)}}{\|e^{\tilde g}\|_{L^{\alpha}(\omega;E)}} =\frac{\|e^{{\bf Q}_{\tilde t}g}\|_{L^{\beta}(\omega;E)}}{\|e^{g}\|_{L^{\alpha}(\omega;E)}}.$$
 Furthermore, since ${\bf Q}_{\tilde t} g = {\bf Q}_{\tilde t} \tilde g$, up to a translation (see \eqref{Q-t-explicit} and \eqref{Q-t-tilde}), it follows that $\|e^{{\bf Q}_{\tilde t}\tilde g}\|_{L^{\beta}(\omega;E)} = \|e^{{\bf Q}_{\tilde t}g}\|_{L^{\beta}(\omega;E)},$ which implies $$\|e^{g}\|_{L^{\alpha}(\omega;E)} = \|e^{\tilde g}\|_{L^{\alpha}(\omega;E)}.$$ This relation together with $\tilde g\geq g$ on $E$ implies  that $ \tilde g =g$ a.e.\ on $E$, which is a  contradiction. 
 
 It remains to analyze the equality in  \eqref{hyperc-estimate} whenever $\alpha=\beta$; namely, we assume that equality holds in  \eqref{hyperc-estimate} for some $\tilde t \in (0,t_0)$ and $g\in \mathcal F_{\tilde t}(E)$ with $e^g\in  L^\alpha(\omega;E),$ i.e.,  $\|e^{{\bf Q}_{\tilde t}\tilde g}\|_{L^{\alpha}(\omega;E)} = \|e^{g}\|_{L^{\alpha}(\omega;E)}.$ Since we have that ${\bf Q}_{\tilde t}g\leq g$ on $E$, the latter equality implies that ${\bf Q}_{\tilde t}g= g$. 
 Moreover,  the monotonicity of $t\mapsto {\bf Q}_tg$ implies that for every $t\in [0,\tilde t]$, 
 $${\bf Q}_{\tilde t}g\leq {\bf Q}_{ t}g\leq g={\bf Q}_{\tilde t}g,$$
 thus ${\bf Q}_{ t}g=g$ for every $t\in [0,\tilde t]$. In particular, $ 	\frac{\partial}{\partial t}{\bf Q}_{t}g=0$ and by the Hamilton-Jacobi equation \eqref{H-J} it follows that ${|\nabla {\bf Q}_{t}g(x)|^p}=0$ for every $t\in (0,\tilde t)$ and a.e. $x\in E.$ Since ${\bf Q}_tg$ is locally Lipschitz, it turns out that $g={\bf Q}_{ t}g\equiv C$ for some $C\in\mathbb R,$ which contradicts the fact that $e^g\in  L^\alpha(\omega;E)$. This concludes the proof. 
\hfill $\square$




\begin{remark}\rm 
As a final remark we should mention that there is a well-known equivalence between the log-Sobolev inequality, hypercontractivity of the Hopf-Lax semigroup and Pr\'ekopa-Leindler inequalities, see \cite{Gentil}, \cite{delPinoDolbeaultGentil}. It would be therefore feasible that this equivalence is also reflected in the  characterization of the equality cases in these inequalities. Consequently, one possible  route to obtain a characterization of the extremals in the weighted log-Sobolev inequality could have been done as follows:  Start from weighted  Pr\'ekopa-Leindler inequalities as shown by  Milman and Rotem \cite{MR_Adv}. Next, obtain a characterization of the equality cases here using the techniques of Balogh and Krist\'aly \cite{BK_Adv} or Dubuc \cite{Dubuc}. Using the characterization of the equality cases in the weighted Pr\'ekopa-Leindler inequality, the characterization of the  equality case in the Hopf-Lax hypercontractivity estimate could be deduced and could be used for a  characterization of the extremals in the weighted log-Sobolev inequality. We think that this strategy could be worked out, however the technical complexity involved in this process would be at least comparable if not higher to the one of the present paper.  
 \end{remark}




\noindent {\bf Acknowledgment.} 
The authors thank the anonymous Referee for all valuable comments and suggestions that significantly improved the presentation of the paper.
A. Krist\'aly also thanks the members of Institute of Mathematics,  Universit\"at Bern, for their invitation and kind hospitality, where the present work has been initiated.

 
\end{document}